\documentclass[11pt,reqno]{amsproc}
\title{On well-posedness of the Muskat problem with surface tension}

\author{Huy Q. Nguyen}
\address{Department of Mathematics, Brown University, Providence, RI 02912}
\email{hnguyen@math.brown.edu}

\usepackage[margin=1in]{geometry}
\usepackage{amsmath, amsthm, amssymb, mathrsfs, stmaryrd}
\usepackage{times}
\usepackage{color}
\usepackage{hyperref}
\usepackage{graphicx}
\usepackage{subcaption}
\usepackage{float}

\parskip=5pt
\parindent=0pt
\newcommand{\bq}{\begin{equation}}
\newcommand{\eq}{\end{equation}}
\newcommand{\bqa}{\begin{eqnarray*}}
\newcommand{\eqa}{\end{eqnarray*}}

\theoremstyle{plain}
\newtheorem{theo}{Theorem}[section]
\newtheorem{prop}[theo]{Proposition}
\newtheorem{lemm}[theo]{Lemma}

\newtheorem{defi}[theo]{Definition}
\theoremstyle{definition}
\newtheorem{rema}[theo]{Remark}
\newtheorem{nota}[theo]{Notation}
\DeclareMathOperator{\cnx}{div}

\DeclareMathOperator{\dist}{dist}

\DeclareSymbolFont{pletters}{OT1}{cmr}{m}{sl}
\DeclareMathSymbol{s}{\mathalpha}{pletters}{`s}


\def\tt{\theta}
\def\eps{\varepsilon}
\def\na{\nabla}
\def\la{\left\lvert}

\def\ra{\right\vert}
 
\def\lb{\llbracket}
\def\rb{\rrbracket}
\def\les{\lesssim}

\def\mez{\frac{1}{2}}
\def\tdm{\frac{3}{2}}

\def\Rr{\mathbb{R}}
\def\T{\mathbb{T}}
\def\Nn{\mathbb{N}}
\def\Cc{\mathbb{C}}

\def\cF{\mathcal{F}}

\def\cR{\mathcal{R}}
\def\ld{\lambda}

\def\p{\partial}
\def\na{\nabla}

\def\wt{\widetilde}

\def\g{\mathfrak{g}}
\def\s{\mathfrak{s}}

\numberwithin{equation}{section}

\pagestyle{plain}

\date{today}
\begin{document}
\begin{abstract}
We consider the Muskat problem with surface tension for one fluid  or two fluids, with or without viscosity jump, with infinite depth or Lipschitz rigid boundaries, and in arbitrary dimension $d$ of the interface. The problem  is nonlocal, quasilinear, and to leading order,  is scaling invariant in the Sobolev space $H^{s_c}(\Rr^d)$ with $s_c=1+\frac d2$. We prove local well-posedness  for {\it large data} in {\it all subcritical Sobolev spaces} $H^s(\Rr^d)$, $s>s_c$, allowing for initial interfaces whose curvatures are unbounded and, furthermore when $d=1$,  not locally square integrable. To the best of our  knowledge, this is the first large-data well-posedness result that covers all subcritical Sobolev spaces for the Muskat problem with surface tension. We  reformulate the problem in terms of the Dirichlet-Neumann operator and use a paradifferential approach to reduce the problem to an explicit parabolic equation, which is of independent interest. 
\end{abstract}

\keywords{Muskat,  Surface tension, Free boundary problems, Regularity, Paradifferential calculus.}

\noindent\thanks{\em{ MSC Classification: 35R35, 35Q35, 35S10, 35S50, 76B03.}}

\maketitle


\section{Introduction}

\subsection{The Muskat problem} The  Muskat problem (\cite{Mus}) of practical importance in geoscience describes the dynamics of two immiscible fluids  in a porous medium with different densities $\rho^\pm$ and different viscosities $\mu^\pm$. Let us denote the interface between the two fluids by $\Sigma$ and assume that it is the graph of a time-dependent function $\eta(x, t)$
\bq\label{Sigma}
\Sigma_t=\{(x, \eta(t,x)): x\in \Rr^d\},
\eq
where $d\ge 1$ is the horizontal dimension.  The associated time-dependent fluid domains are then given by
\begin{align}\label{Omega+}
&\Omega^+_t=\{(x, y)\in \Rr^{d}\times \Rr: \eta(t, x)<y<\underline{b}^+(x)\},\\
\label{Omega-}
&\Omega^-_t=\{(x, y)\in \Rr^{d}\times \Rr: \underline{b}^-(x)<y<\eta(t, x)\},
\end{align}
where $\underline{b}^\pm$ are the parametrizations of the rigid boundaries
\bq\label{Gamma:pm}
\Gamma^\pm =\{(x, \underline{b}^\pm(x)): x\in \Rr^d\}.
\eq
Here $x$ is the horizontal variable and $y$ is the vertical variable.

The incompressible fluid velocity $u^\pm$ in each region is governed by Darcy's law
\begin{align}\label{Darcy:pm}
&\mu^\pm u^\pm+\nabla_{x, y}p^\pm=-\rho^\pm \g \vec{e}_{d+1},\quad \cnx_{x, y} u^\pm =0\quad \text{in}~\Omega^\pm_t,
\end{align}
where $\g$ is the acceleration due to gravity and $\vec{e}_{d+1}$ is the $(d+1)$th vector of the canonical basis of $\Rr^{d+1}$.

At the interface $\Sigma$, the normal velocity is continuous
\bq\label{u.n:pm}
u^+\cdot n=u^-\cdot n\quad\text{on}~\Sigma_t
\eq
where $n=\frac{1}{\sqrt{1+|\nabla \eta|}}(-\nabla \eta, 1)$ is the upward pointing unit normal to $\Sigma_t$. Then, the interface moves with the fluid
\bq\label{kbc:pm}
\p_t\eta=\sqrt{1+|\nabla \eta|^2}u^-\cdot n\vert_{\Sigma_t}.
\eq
According to the Young-Laplace equation, the pressure jump at the interface is proportional to the mean curvature $H(\eta)$
\bq\label{p:pm}
p^--p^+= \s H(\eta):=-\s\cnx\Big(\frac{\na\eta}{\sqrt{1+|\na\eta|^2}}\Big)\quad\text{on}~\Sigma_t,
\eq
where $\s \ge 0$ denotes the surface tension coefficient.

Finally, at the two rigid boundaries, the no-penetration boundary conditions are imposed
\bq\label{bc:b:pm}
u^\pm\cdot \nu^\pm=0\quad \text{on}~\Gamma^\pm,
\eq
where $\nu^\pm=\pm\frac{1}{\sqrt{1+|\nabla \underline b^\pm|}}(-\nabla \underline b^\pm, 1)$ denotes the outward pointing unit normal to $\Gamma^\pm$. We will also consider the case that at least one of $\Gamma^\pm$ is empty (infinite depth)  in which case the velocity $u$ vanishes at infinity.

We shall refer to the system \eqref{Darcy:pm}-\eqref{bc:b:pm} as {\it the two-phase Muskat problem}. When the top phase corresponds to vacuum, i.e. $\mu^+=\rho^+=0$, the two-phase Muskat problem reduces to {\it the one-phase Muskat problem} and \eqref{p:pm} becomes
\bq\label{bcp:1p}
p^-=\s H(\eta)\quad\text{on}~\Sigma_t.
\eq
We note that the Muskat problem is mathematically analogous to the vertical Hele-Shaw problem with gravity \cite{Hel1, Hel2}.
\subsection{Reformulation and main results}
Our reformulation for the Muskat problem involves the Dirichlet-Neumann operators $G^\pm(\eta)$ associated to $\Omega^\pm$. For a given function $f$, letting $\phi^\pm$ solve 
\bq\label{elliptic:intro}
\begin{cases}
\Delta_{x, y} \phi^\pm=0\quad\text{in}~\Omega^\pm,\\
\phi^\pm=f\quad\text{on}~\Sigma,\\
\frac{\p\phi^\pm}{\p \nu^\pm}=0\quad\text{on}~\Gamma^\pm,
\end{cases}
\eq
we define
\bq\label{defG:intro}
G(\eta)^\pm f=\sqrt{1+|\nabla \eta|^2}\frac{\p\phi^\pm}{\p n}.
\eq
\begin{prop}[{\bf Reformulation}]\label{prop:reform} 
$(i)$ If $(u, p, \eta)$ solve the one-phase Muskat problem then $\eta:\Rr^d\to \Rr$  obeys the equation 
\bq
\p_t\eta=-\frac{1}{\mu^-} G^{-}(\eta)\big(\s H(\eta)+\rho^-\g\eta\big).\label{eq:eta}
\eq
Conversely, if $\eta$  is a solution of \eqref{eq:eta} then the one-phase Muskat problem has a solution which admits $\eta$ as the free surface.

$(ii)$ If $(u^\pm, p^\pm, \eta)$ is a solution of the two-phase Muskat problem then 
\bq\label{eq:eta2p}
\p_t\eta=-\frac{1}{\mu^-}G^{-}(\eta)f^-,
\eq
where $f^\pm:=p^\pm\vert_{\Sigma}+\rho^\pm\g \eta$ satisfy
\bq\label{system:fpm}
\begin{cases}
 f^--f^+=\s H(\eta)+\lb \rho\rb\g\eta,\quad \lb \rho\rb=\rho^--\rho^+,\\
\frac{1}{\mu^+}G^+(\eta)f^+-\frac{1}{\mu^-}G^-(\eta)f^-=0.
\end{cases}
\eq
Conversely, if $\eta$ is a solution of  \eqref{eq:eta2p} where $f^\pm$ solve \eqref{system:fpm} then the two-phase Muskat problem has a solution which admits $\eta$ as the free interface.
\end{prop}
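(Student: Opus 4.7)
The plan is to convert Darcy's law into a harmonic-function statement by introducing the hydraulic potentials
\begin{equation*}
\phi^\pm := -\frac{1}{\mu^\pm}\bigl(p^\pm + \rho^\pm \g\, y\bigr).
\end{equation*}
Under this change of variables, \eqref{Darcy:pm} reduces to $u^\pm = \nabla_{x,y}\phi^\pm$, incompressibility becomes $\Delta_{x,y}\phi^\pm = 0$ in $\Omega^\pm$, and the no-penetration condition \eqref{bc:b:pm} becomes the homogeneous Neumann condition $\partial_{\nu^\pm}\phi^\pm = 0$ on $\Gamma^\pm$. Thus $\phi^\pm$ is the harmonic extension of its own trace on $\Sigma_t$ in the sense of \eqref{elliptic:intro}, and the kinematic boundary condition \eqref{kbc:pm} takes the form
\begin{equation*}
\partial_t\eta = \sqrt{1+|\nabla\eta|^2}\,\nabla_{x,y}\phi^-\cdot n\big|_{\Sigma_t} = G^-(\eta)\bigl(\phi^-|_{\Sigma_t}\bigr).
\end{equation*}

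For part (i), the pressure condition \eqref{bcp:1p} gives $\phi^-|_{\Sigma_t} = -\frac{1}{\mu^-}\bigl(\s H(\eta) + \rho^- \g \eta\bigr)$, and inserting this into the previous identity yields \eqref{eq:eta} at once. The converse is obtained by reading the same computation backwards: given $\eta$ solving \eqref{eq:eta}, I would define $\phi^-$ as the solution of \eqref{elliptic:intro} with Dirichlet data $-\frac{1}{\mu^-}(\s H(\eta)+\rho^-\g\eta)$ on $\Sigma_t$, then set $u^- := \nabla_{x,y}\phi^-$ and $p^- := -\mu^-\phi^- - \rho^-\g\, y$; Darcy's law and all boundary conditions hold by construction. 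For part (ii), setting $f^\pm := p^\pm|_{\Sigma_t} + \rho^\pm \g \eta$ so that $\phi^\pm|_{\Sigma_t} = -\frac{1}{\mu^\pm} f^\pm$, the pressure jump \eqref{p:pm} rewrites as the first line of \eqref{system:fpm}, while the normal-velocity continuity \eqref{u.n:pm} becomes $G^+(\eta)(\phi^+|_{\Sigma_t}) = G^-(\eta)(\phi^-|_{\Sigma_t})$, which is the second line of \eqref{system:fpm}. The evolution equation \eqref{eq:eta2p} is again just the kinematic condition.

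The only non-tautological step is the converse of (ii): given $\eta$, one must produce $f^\pm$ satisfying \eqref{system:fpm}. Eliminating $f^+$ via the first line of \eqref{system:fpm} reduces this to inverting the operator $\frac{1}{\mu^+} G^+(\eta) - \frac{1}{\mu^-} G^-(\eta)$ against the source $\frac{1}{\mu^+} G^+(\eta)\bigl(\s H(\eta) + \lb\rho\rb\g\eta\bigr)$. Since each $G^\pm(\eta)$ annihilates constants and produces mean-zero (or decaying-at-infinity) output, the equation determines $f^-$ up to a harmless additive constant; with $f^\pm$ in hand, the reconstruction of $(u^\pm, p^\pm)$ mirrors part (i). I expect the only real issue here to be verifying this solvability in function spaces compatible with the regularity of $\eta$, but at the level of the formal reformulation it reduces to standard elliptic theory and is not the substantive content of the paper.
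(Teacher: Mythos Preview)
Your proof is correct and follows essentially the same route as the paper's: the paper introduces $q^\pm = p^\pm + \rho^\pm\g y$ where you use $\phi^\pm = -\frac{1}{\mu^\pm}q^\pm$, a harmless difference in normalization, and then both arguments read off the equations from the kinematic, pressure-jump, and normal-velocity-continuity conditions in the same way. Your treatment is in fact slightly more explicit than the paper's Appendix~\ref{appendix:reform}, which only writes out the forward direction; your discussion of the converse in part~(ii) correctly anticipates that the solvability of \eqref{system:fpm} is the one nontrivial ingredient, which the paper supplies separately in Proposition~\ref{prop:fpm}.
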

We postpone the proof of Proposition \ref{prop:reform} to Appendix \ref{appendix:reform}. The above reformulation contains as a special case the reformulation obtained in \cite{NgPa} in the absence of surface tension, i.e. $\s=0$. In this work, we are interested in the case that $\s$ is a fixed positive constant. To leading order, since $\s H(\eta)+\rho^-\g\eta\sim -\s\Delta_x \eta$, equation \eqref{eq:eta} behaves like
\bq\label{leading1p}
\p_t\eta=-\frac{\s}{\mu^-}G^-(\eta)\Delta_x\eta.
\eq
It can be easily checked that in the case of no bottoms $\Gamma^\pm=\emptyset$, if  $\eta(t, x)$ solves \eqref{leading1p} then so is 
\[
\eta_\ld(t, x)=\ld^{-1}\eta(\ld^3 t, \ld x)\quad\forall \ld>0,
\]
and thus the ($L^2$-based) Sobolev space $\dot H^{1+\frac d2}(\Rr^d)$ is scaling invariant. Interestingly, the Muskat problem without surface tension (and without bottoms) also admits $\dot H^{1+\frac d2}(\Rr^d)$ as the scaling invariant Sobolev space (\cite{NgPa}). The presence of bottoms alters the behavior of solutions at low frequencies. Our main results state that the Muskat problem with surface tension is locally well-posed  for {\it large data} in {\it all subcritical Sobolev spaces} $H^s(\Rr^d)$, $s>1+\frac d2$, either for one fluid or two fluids, with or without viscosity jump, with infinite depth or with Lipschitz rigid boundaries, and in arbitrary dimension. Here well-posedness is obtained in the  sense of Hadamard: existence, uniqueness and Lipschitz dependence on initial data.

Introducing  the spaces
\begin{align*}
&\dot W^{1, \infty}(\Rr^d)=\big\{v\in L^1_{loc}(\Rr^d): \na v\in L^\infty(\Rr^d)\big\},\\
&Z^s(T)=C([0, T]; H^s(\Rr^d))\cap L^2([0, T]; H^{s+\tdm}(\Rr^d)),
\end{align*}
we  state our main results in the following theorems. 
\begin{theo}[{\bf Well-posedness for the one-phase problem}]\label{theo:1p}
Let $\mu^->0$, $\rho^->0$ and $\s>0$. Let $s>1+\frac d2$ be a real number with $d\ge 1$. Consider either $\Gamma^-=\emptyset$ or $\underline{b}^{-}\in \dot W^{1, \infty}(\Rr^d)$. Let $\eta_0\in H^s(\Rr^d)$ satisfy 
\[
\dist (\eta_0, \Gamma^-)>2h>0.
\]
 Then there exist $T>0$, depending only on $\| \eta_0\|_{H^s}$ and $(h, s, \frac{\rho^-\g}{\mu^-}, \frac{\s}{\mu^-})$,  and a unique solution $\eta\in Z^s(T)$ to \eqref{eq:eta} such that $\eta\vert_{t=0}=\eta_0$ and
\[
\inf_{t\in [0, T]}\dist(\eta(t),\Gamma^-)>h.
\]
Moreover, if $\eta_1$ and $\eta_2$ are two solutions of \eqref{eq:eta} then the stability estimate 
\bq
\| \eta_1-\eta_2\|_{Z^s(T)}\le \cF\big(\|(\eta_1, \eta_2)\|_{Z^s(T)}\big)\| (\eta_1-\eta_2)\vert_{t=0}\|_{H^s}
\eq
holds for some function $\cF:\Rr^+\to \Rr^+$ depending only on $ (h, s, \frac{\rho^-\g}{\mu^-}, \frac{\s}{\mu^-})$.
\end{theo}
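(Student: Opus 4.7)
The starting point is to recognize that \eqref{eq:eta} is a quasilinear parabolic equation of order three. Indeed, $\s H(\eta)+\rho^-\g\eta$ has principal symbol of order two (roughly $\s|\xi|^2/(1+|\na\eta|^2)^{\tdm}$, projected orthogonally to $\na\eta$), while the Dirichlet--Neumann operator $G^-(\eta)$ has principal symbol $\ma_1(\eta;\xi)$ of order one; their composition is a third-order operator with positive real principal part, precisely matching the parabolic gain of $\tdm$ derivatives in $L^2_t$ encoded by $Z^s(T)$. My plan is thus to paralinearize the right-hand side and reduce \eqref{eq:eta} to a model
\[
\p_t\eta+T_{\mathfrak{p}(\eta;\xi)}\eta=F(\eta),
\]
where $\mathfrak{p}$ is elliptic of order three with positive real part and $F(\eta)$ is of strictly lower order.

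First I would use Bony's decomposition to paralinearize the curvature,
\[
\s H(\eta)+\rho^-\g\eta=T_{\ell_2(\eta;\xi)}\eta+R_1(\eta),
\]
with $\ell_2$ a second-order symbol tame in $\eta$ and $R_1$ controlled in $H^{s+\tdm}$ by tame paraproduct estimates. Next, after flattening the interface via $(x,y)\mapsto(x,y-\eta(x))$ and analyzing the resulting variable-coefficient elliptic problem on a strip (with either empty or Lipschitz bottom), I would paralinearize the Dirichlet--Neumann operator as $G^-(\eta)f=T_{\ma_1(\eta;\xi)}f+R_2(\eta)f$, where the symbolic calculus depends only on $\|\eta\|_{H^s}$ and the positive lower bound $h$ on the distance to $\Gamma^-$. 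Composing these expansions by paradifferential symbolic calculus yields the model equation with $\mathfrak{p}=\frac{1}{\mu^-}\ma_1\#\ell_2$ at the principal level, so that $\RE\mathfrak{p}(\eta;\xi)\gtrsim \L{\xi}^3$ for large $|\xi|$.

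The a priori estimate in $Z^s(T)$ then follows from an energy argument: apply $\L{D}^s$ to the model equation, pair with $\L{D}^s\eta$ in $L^2_x$, and invoke G\aa rding's inequality for $T_{\mathfrak{p}}$ (together with sharp commutator bounds between $\L{D}^s$ and $T_{\mathfrak{p}}$) to extract a coercive term $c\|\eta\|_{H^{s+\tdm}}^2$ on the left, which absorbs the remainder contributions estimated by $\cF(\|\eta\|_{H^s})(1+\|\eta\|_{H^{s+\tdm}}^{2-\theta})$ for some $\theta>0$. To construct genuine solutions I would introduce a Friedrichs-type frequency truncation (or, equivalently, an artificial hyperviscosity $-\eps\Delta^2$), solve the regularized ODE on $H^s$ via Cauchy--Lipschitz, and pass to the limit $\eps\to 0$ using the uniform $Z^s(T)$ bound together with compactness in time coming from \eqref{eq:eta} itself.

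For uniqueness and Lipschitz dependence on data, I would subtract the paralinearized equations for two solutions $\eta_1,\eta_2$, regard $\eta_1-\eta_2$ as the unknown and $\eta_j$ as fixed coefficients, and perform the analogous energy estimate (first at level $H^{s-\mez}$ to close the contraction, then bootstrapping back to $H^s$). The main obstacle is the sharp paralinearization of $G^-(\eta)$ when $\eta\in H^s$ with $s$ only slightly above the Lipschitz threshold $1+\frac d2$ and $\underline{b}^-$ is merely Lipschitz: this forces one to work with low-regularity symbols and to establish tame elliptic estimates for harmonic extensions on strips with rough boundaries, ensuring that each remainder is strictly lower order than the principal term. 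The flexibility of paradifferential calculus — as opposed to classical pseudodifferential calculus with smooth symbols — will be essential at precisely this step.
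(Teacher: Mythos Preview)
Your outline matches the paper's approach almost exactly: paralinearize $H(\eta)=T_\ell\eta+R_H$ via Bony's decomposition, invoke the paralinearization $G^-(\eta)=T_\lambda+R^-$ from \cite{ABZ3,NgPa} (rather than redoing the strip analysis), compose by symbolic calculus to obtain $\p_t\eta+\frac{\s}{\mu^-}T_{\lambda\ell}\eta=g$ with a $\delta$-smoothing $g$, run the $H^s$ energy estimate using the square-root factorization $T_{\lambda\ell}\approx (T_{\sqrt{\lambda\ell}})^*T_{\sqrt{\lambda\ell}}$ (your G\aa rding step), and construct solutions via Friedrichs mollifiers.

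The one place your plan deviates, and where it has a gap, is the stability estimate. You propose to contract first in $H^{s-\frac12}$ and then ``bootstrap back to $H^s$''. The paper instead closes the contraction \emph{directly} in $Z^s(T)$: it derives for $\eta_\delta=\eta_1-\eta_2$ a reduced equation $\p_t\eta_\delta+\frac{\s}{\mu^-}T_{\lambda_1\ell_1}\eta_\delta=\wt\cR$ with
\[
\|\wt\cR\|_{H^{s-\frac32}}\le\cF(N_s)\big(\|\eta_\delta\|_{H^{s+\frac32-\delta}}+N_{s+\frac32}\|\eta_\delta\|_{H^s}\big),
\]
and since $N_{s+\frac32}\in L^2_t$ a Gr\"onwall argument closes at level $s$. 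The technical ingredient making this possible is a separate contraction estimate for the curvature remainder $R_H(\eta)=H(\eta)-T_\ell\eta$ itself (Lemma~\ref{lemm:contraH}), obtained by differentiating $R_H$ in $\eta$. Your bootstrap, by contrast, does not obviously work: a contraction in $H^{s-\frac12}$ yields uniqueness, but Lipschitz dependence in $H^{s-\frac12}$ cannot in general be upgraded to Lipschitz dependence in $H^s$ by interpolation or regularity alone --- one would have to rerun the energy argument at the top level anyway, and the theorem explicitly asserts stability in $H^s$, which the paper singles out as a feature distinguishing this result from typical quasilinear well-posedness statements.
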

\begin{theo}[{\bf Well-posedness for the two-phase problem}]\label{theo:2p}
Let $\mu^\pm >0$, $\rho^\pm >0$ and $\s>0$. Let $s>1+\frac d2$ be a real number  with $d\ge 1$. Consider any combination of $\Gamma^\pm=\emptyset$ and $\underline{b}^\pm\in \dot W^{1, \infty}(\Rr^d)$. Let $\eta_0\in H^s(\Rr^d)$ satisfy 
\[
\dist (\eta_0, \Gamma^\pm)>2h>0.
\]
Then there exist $T>0$, depending only on $\| \eta_0\|_{H^s}$ and $(h, s, \s, \mu^\pm, \lb \rho\rb\g)$,  and a unique solution $\eta\in Z^s(T)$ to \eqref{eq:eta2p}-\eqref{system:fpm} such that $\eta\vert_{t=0}=\eta_0$ and
\[
\inf_{t\in [0, T]}\dist(\eta(t),\Gamma^\pm)>h.
\]
Moreover,  if $\eta_1$ and $\eta_2$ are two solutions of \eqref{eq:eta2p}-\eqref{system:fpm} then the stability estimate 
\bq
\| \eta_1-\eta_2\|_{Z^s(T)}\le \cF\big(\|(\eta_1, \eta_2)\|_{Z^s(T)}\big)\| (\eta_1-\eta_2)\vert_{t=0}\|_{H^s}
\eq
holds for some function $\cF:\Rr^+\to \Rr^+$ depending only on $(h, s, \s, \mu^\pm, \lb \rho\rb\g)$.
\end{theo}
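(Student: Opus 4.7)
The plan is to reduce \eqref{eq:eta2p}--\eqref{system:fpm} to a single closed quasilinear parabolic equation for $\eta$ and then to argue along the lines of Theorem~\ref{theo:1p}. Setting $g:=\s H(\eta)+\lb\rho\rb\g\eta$ and using the jump $f^-=f^++g$, the second equation of \eqref{system:fpm} becomes
\[
M(\eta)f^+ \;=\; \tfrac{1}{\mu^-}G^-(\eta)g,\qquad M(\eta):=\tfrac{1}{\mu^+}G^+(\eta)-\tfrac{1}{\mu^-}G^-(\eta).
\]
With the upward-normal convention of \eqref{defG:intro}, $G^-(\eta)$ has principal symbol $|\xi|$ while $G^+(\eta)$ has principal symbol $-|\xi|$, so $M(\eta)$ is elliptic of order one with principal symbol $-\bigl(\tfrac{1}{\mu^+}+\tfrac{1}{\mu^-}\bigr)|\xi|$, \emph{irrespective of whether there is a viscosity jump}. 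Inverting $M(\eta)$ paradifferentially yields $f^+$, hence $f^-=f^++g$, as nonlinear functionals of $\eta$ with tame bounds in $H^s$ and $\dot W^{1,\infty}$. Substituting back into \eqref{eq:eta2p} produces the closed evolution
\[
\p_t\eta \;=\; -\cN(\eta)\;:=\;-\tfrac{1}{\mu^-}G^-(\eta)f^-(\eta),
\]
whose linearization at a flat interface reads $\p_t\eta+\tfrac{\s}{\mu^++\mu^-}|D|^{3}\eta=0$, parabolic of order three.

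The next step is to paralinearize $\cN(\eta)$. Extending the symbolic calculus developed for $G^-(\eta)$ in the proof of Theorem~\ref{theo:1p} to $G^+(\eta)$ and to $M(\eta)^{-1}$, I would write
\[
\cN(\eta) \;=\; T_{\mb(\eta;\xi)}\eta + R(\eta),
\]
where $\mb$ is a symbol of order three with $\RE\mb(\eta;\xi)\gtrsim |\xi|^{3}$ on compact sets of $\dot W^{1,\infty}$ and $R(\eta)$ gains one derivative over the scale $H^s$. Applying $\L D^{s}$ to the equation, commuting past $T_{\mb}$ by symbolic calculus, and integrating by parts against $\L D^{s}\eta$ then yields a parabolic energy inequality of the form
\[
\tfrac{d}{dt}\|\eta\|_{H^s}^{2} + c\,\|\eta\|_{H^{s+3/2}}^{2} \;\le\; \cF\bigl(\|\eta\|_{H^s}\bigr)\bigl(1+\|\eta\|_{H^{s+3/2}}\bigr)
\]
for some $c>0$, which closes an a~priori bound in $Z^{s}(T)$ on a time interval $T$ depending only on $\|\eta_0\|_{H^s}$ and the fixed parameters. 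Existence then follows by the standard scheme of regularizing the initial data (or the operator $\cN$), producing smooth approximate solutions, and passing to the limit via weak compactness combined with continuity of $\cN$ on bounded sets of $Z^s(T)$.

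Uniqueness and Lipschitz dependence on data follow from an analogous energy estimate for the difference $\eta_1-\eta_2$, whose equation has a linearized operator retaining the parabolic structure, with coefficients tamely controlled by $(\eta_1,\eta_2)$. The main obstacle is the low-regularity symbolic analysis at the borderline $s>1+\tfrac{d}{2}$, where $\na\eta$ is merely in $H^{s-1}$ and in particular not Lipschitz: one must paralinearize $G^\pm(\eta)$, the coupled operator $M(\eta)$, and crucially its inverse $M(\eta)^{-1}$ in this minimal-regularity setting while simultaneously handling only Lipschitz bottoms $\Gamma^\pm$. The treatment of $G^-(\eta)$ alone has already been carried out for Theorem~\ref{theo:1p}; the essential new work here is to extend the calculus to $G^+(\eta)$, to control $M(\eta)^{-1}$ with enough precision to identify the principal symbol of $\cN$ with the correct sign, and to do so without sacrificing the gain of derivatives that powers the parabolic estimate.
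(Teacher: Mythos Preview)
Your overall strategy---reduce to a closed third-order parabolic equation for $\eta$ with principal operator $\tfrac{\s}{\mu^++\mu^-}T_{\ld\ell}$, then run the parabolic energy argument from the one-phase case---is exactly what the paper does (Proposition~\ref{prop:reduce:2p} and Proposition~\ref{apriori:2p}). The structural observation that $M(\eta)$ is elliptic regardless of viscosity jump is also correct, though the principal symbols of $G^\pm(\eta)$ are $\mp\ld(x,\xi)$ from \eqref{ld}, not $\mp|\xi|$; this matters for the nonlinear estimates but not for the heuristic.

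The substantive difference is in how $f^\pm$ are produced. You propose to ``invert $M(\eta)$ paradifferentially,'' but paradifferential calculus only gives a parametrix---an inverse modulo a lower-order remainder---and does not by itself solve $M(\eta)f^+=\tfrac{1}{\mu^-}G^-(\eta)g$. Upgrading the parametrix to a true inverse at the minimal regularity $s>1+\tfrac d2$, with merely Lipschitz bottoms and in the homogeneous spaces $\wt H^{1/2}_\pm$, would require an additional argument (Neumann series or Fredholm theory) that you have not supplied. The paper avoids this entirely: it first invokes the \emph{variational} solvability of \eqref{system:fpm} in $\wt H^{1/2}_\pm$ from \cite{NgPa} (Proposition~\ref{prop:fpm}), then uses the paralinearizations $G^\pm(\eta)f^\pm=\mp T_\ld f^\pm+R^\pm(\eta)f^\pm$ not to invert $M(\eta)$ but to \emph{bootstrap} the regularity of the already-existing $f^\pm$ up to $\wt H^{s-1/2}_\pm$ (Proposition~\ref{prop:estfpm}), and along the way obtains the algebraic identity \eqref{Tld:f-} from which the parabolic reduction drops out directly. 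This variational-plus-bootstrap route is shorter and handles the screened spaces $\wt H^s_\pm$ naturally. The contraction estimate (your final paragraph) likewise hinges on contraction bounds for $f^\pm_1-f^\pm_2$, which the paper again obtains by bootstrapping from a variational $\wt H^{1/2}_\pm$ difference estimate rather than by differentiating an inverse; this is the genuinely new work in the two-phase stability proof and is not as routine as your sketch suggests.
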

To the best of our  knowledge, Theorems \ref{theo:1p} and \ref{theo:2p} are the first  large-data well-posedness results  that cover all subcritical Sobolev spaces for the Muskat problem with surface tension.  The corresponding results in the absence of surface tension were obtained in the recent work \cite{NgPa}; see Subsection \ref{section:prior}  for a discussion on prior results. In particular, Theorems \ref{theo:1p} and \ref{theo:2p} allow for initial interfaces whose  curvatures are {\it unbounded} for $d\ge 1$ and  {\it not locally square integrable} for $d=1$. 

Using results on paralinearization of the Dirichlet-Neumann operator obtained in \cite{ABZ3, NgPa} we shall reduce both the one-phase and two-phase Muskat problems with surface tension to the following explicit {\it parabolic paradifferential equation}
\bq\label{reduce:intro}
\p_t \eta+\frac{\s}{\mu^++\mu^-}T_{\ld \ell} \eta=g
\eq
where $g$ satisfies 
\bq\label{estg:intro}
\| g\|_{H^{s-\tdm+\delta}}\le \cF(\| \eta\|_{H^s})\| \eta\|_{H^{s+\tdm}}
\eq
provided that 
\bq
s>1+\frac d2\quad\quad\text{and} \quad  \delta \in (0, s-1-\frac d2)~\text{and}~ \delta\le \mez.
\eq
We refer to Propositions \ref{prop:reduce} and \ref{prop:reduce:2p} for the precise statements and to Appendix \ref{appendix:para} for notation of paradifferential operators. Here $\ld(x, \xi)$ and $\ell(x, \xi)$, defined by  \eqref{ld} and \eqref{def:ell}, are respectively the principal symbol of the Dirichlet-Neumann operator $G^-(\eta)$ and the mean curvature operator $H(\eta)$; moreover they are elliptic and  of first and second order respectively. Consequently, $T_{\ld \ell}$ is an elliptic paradifferential operator of third order and thus the solution $\eta$ to \eqref{reduce:intro} gains $\tdm$ derivatives when measured in $L^2_t$. The estimate \eqref{estg:intro} then shows that for any subcritical data  $\eta_0\in H^s(\Rr^d)$,  the right-hand side $g$ is smoothing which in turn allows one to close the energy estimate in $L^\infty_t H^s_x\cap L^2_t H^{s+\tdm}_x$. The stability estimate is more delicate, especially for the two-phase problem.

 The reduction \eqref{reduce:intro}-\eqref{estg:intro} is of independent interest. It is worth remarking that unlike the case of zero surface tension \cite{NgPa}, this reduction does not involve  the trace of velocity on the interface.

This work emphasizes the virtue of the  paradifferential calculus approach in establishing (almost) sharp  large-date well-posedness for free boundary problems in fluid dynamics. In the context of water waves, this approach was initiated in \cite{ABZ1, ABZ3, ABZ4} with inspiration from \cite{AlaGuy, LannesJAMS}. In the context of Muskat, this approach was independently employed in \cite{AL, NgPa} for the case without surface tension. In this work, by taking advantage of the strong dissipation mechanism of the Muskat problem with surface tension, we obtain  well-posedness results that allow for  curvature singularity of initial data. Such a result for the water waves problem with surface tension remains open in view of the recent works \cite{ABZ1, PoyNg1, PoyNg2, Ng}.
 \begin{rema}
Theorems \ref{theo:1p} and \ref{theo:2p} still hold in the following situations:
\begin{itemize}
\item  Gravity is neglected ($\g=0$), as usually assumed for the Hele-Shaw problem.
\item  Periodic data $\eta_0\in H^s(\T^d)$ for any $s>1+\frac d2$.
\end{itemize}
Since $\underline{b}^\pm\in \dot W^{1, \infty}(\Rr^d)$, the rigid boundaries $\Gamma^\pm$ can be unbounded. The proof of Theorem \ref{theo:2p}  (see \eqref{est:fpm}) also gives that 
\bq
f^\pm \in L^\infty([0, T]; \wt H^{s-2}_\pm(\Rr^d))\cap L^2([0, T]; \wt H^{s-\mez}_\pm(\Rr^d)).
\eq
For quasilinear PDEs, stability estimates for solutions are usually obtained in rougher topology compared to initial data, e.g. \cite{ABZ1, ABZ3, NgPa}. Theorems \ref{theo:1p} and \ref{theo:2p} however provide stability estimates for solutions in the same topology as initial data.
\end{rema}
\begin{rema}
It is well known that the smoothing effect of surface tension bypasses the Rayleigh-Taylor stability condition required for well-posedness of free boundary problems in the absence of surface tension. In particular, Theorem \ref{theo:2p} {\it does not} require that the less dense fluid is above the denser one, i.e. $\rho^+<\rho^-$. We refer to \cite{CasCorFefGanMar, CheGraShk,  LannesJAMS, NgPa, Wu2d, Wu3d} for in-depth discussions on the Rayleigh-Taylor stability condition for Muskat and water waves.
\end{rema}
\begin{rema}
For simplicity let us consider the infinite-depth case and restrict ourselves to the graph formulation. As a consequence of the fact that the existence time $T$ in Theorems \ref{theo:1p} and \ref{theo:2p} depends only on the $H^s(\Rr^d)$ norm of initial data, if $\| \eta(t)\|_{ H^s}$ remains uniformly bounded in $t$  up to time $T$ then the solution can be continued past $T$. It is possible  that by combining the techniques in the present paper with mixed H\"older-Sobolev  estimates for the Dirichlet-Neumann operator in the spirit of \cite{ABZ4, PoyNg1}, one can prove that  controlling $\| \eta\|_{L^\infty((0, T); W^{1+\eps, \infty})}$ for any small $\eps>0$ would suffice. 

It is an open problem for the Muskat problem (with or without surface tension) whether the control of the maximal slope $\| \na_x\eta\|_{L^\infty((0, T); L^\infty)}$ implies the continuation of the solution. Any continuation criterion in terms of scaling invariant quantities should be interesting. For the 2D Muskat problem without surface tension and  constant viscosity, it is known from  \cite{ConGanShvVic} that the solution remains regular so long as the slope $\p_x\eta$ remains bounded and uniformly continuous.
 \end{rema}
\begin{rema}
The time interval  $[0, T]$ in Theorems \ref{theo:1p} and \ref{theo:2p} shrinks to $0$ as the surface tension coefficient $\s$ vanishes. The question of zero surface tension limit is interesting but is not be pursued in the present paper. We refer to  \cite{Amb-stl, Ambrose3D, FlynNgu} for  results in this direction.
\end{rema}
\subsection{Priori results}\label{section:prior}
The Muskat problem and its mathematical analog -- the Hele-Shaw problem have recently been the subject of intense study in analysis of PDEs and numerical analysis.  The literature is vast and we will mostly discuss the topic of well-posedness. We refer to the recent surveys  \cite{GanSur, GraLaz} for discussions on other topics, and in particular  \cite{CasCorFefGan, CasCorFefGanMar,GomGra} for interesting results on finite-time singularity formation.  

 Taking advantage of the parabolic nature of the Muskat problem, global strong solutions for {\it small data} have been considered in a large number of studies. We refer to \cite{ Cam, Chen, CheGraShk, ConCorGanRodPStr, ConCorGanStr, ConGanShvVic, ConPug, CorGan07, CorLaz, SieCafHow} for data in subcritical $L^2$-based and $L^\infty$-based Sobolev spaces, and to \cite{GanGarPatStr} for data in the critical   Wiener space $\dot\cF^{1, 1}$. We note in particular that  \cite{CheGraShk, GanGarPatStr} allow for viscosity jump and \cite{CorLaz} allows for interfaces with large slopes. In the case of constant viscosity, by using maximum principles for the slope, global weak solutions were constructed in \cite{ConCorGanStr, DenLeiLin}.

 We discuss in detail the issue of local well-posedness for {\it large data}. In the context of the Musat problem, the case {\it without surface tension} is better understood.  Early results on  local well-posedness for large data in Sobolev spaces  date back to \cite{Chen, EscSim, Yi, Amb0,  Amb}. C\'ordoba and Gancedo \cite{CorGan07} introduced  the contour dynamics formulation for the Muskat problem without viscosity jump and with infinite depth, and proved local well-posedness in $H^3(\Rr)$ and $H^4(\Rr^2)$ when the interface is a graph. In \cite{CorCorGan, CorCorGan2}, C\'ordoba, C\'ordoba and Gancedo  extended this result to the case of viscosity jump and nongraph interfaces satisfying the arc-chord and the Rayleigh-Taylor conditions.  One of the main difficulties is  to invert a highly nonlocal equation to express the vorticity amplitude in terms of the interface. Using an ``arbitrary Lagrangian-Eulerian'' approach, Cheng,  Granero and Shkoller  \cite{CheGraShk} (see also \cite{GraShk}) proved local well-posedness for the one-phase problem with flat bottoms when the initial surface $\eta\in H^2(\T)$, allowing for unbounded curvatures. This result was then extended by Matioc \cite{Mat2} to the case of viscosity jump (but infinite depth). For the case of constant viscosity, using nonlinear lower bounds, a technique developed for critical SQG, the authors in \cite{ConGanShvVic} obtained local well-posedness for $\eta\in W^{2, p}(\Rr)$ for all $p\in (1, \infty]$. The space $W^{2, 1}(\Rr)$ is scaling invariant yet requires $\mez$ more derivative compared to $H^{\tdm}(\Rr)$.  Matioc \cite{Mat} sharpened the local well-posedness theory to $\eta \in H^{\tdm+\eps}(\Rr)$ for the case of constant viscosity and infinite depth. This is the first result that covers all subcritical ($L^2$-based) Sobolev spaces for the given one-dimensional setting. By paralinearizing the nonlinearity in the contour dynamics formulation, Alazard and Lazar \cite{AL} gave a different proof and extended the result in \cite{Mat} to homogeneous Sobolev spaces, allowing non-$L^2$ solutions. In the recent joint work \cite{NgPa} of the author, we reformulated the Muskat problem in terms of the Dirichlet-Neumann operator for the general setting: one fluid or two fluids, with or without viscosity jump, with or without rigid boundaries and in arbitrary dimension.  Then employing a paradifferential calculus approach we proved local well-posedness for large data in all subcritical Sobolev spaces. In \cite{AMS}, a similar result was independently obtained for the case of one fluid and without bottom. 
 
Next we discuss results on large-data well-posedness for the Muskat and Hele-Shaw problems {\it with surface tension}, which is the problem considered in the present paper.  Early results for the 2D case date back to Duchon and Robert \cite{DucRob}, Chen \cite{Chen} and Escher-Simonett \cite{EscSim} where the initial interface is smooth enough so that its curvature is at least bounded. In \cite{Amb-stl}, the zero surface tension limit is established for the 2D Muskat problem with smooth $(H^6$) Sobolev data. The issue of low regularity well-posedness has been recently addressed for constant viscosity and viscosity jump respectively in \cite{Mat} and  \cite{Mat2} in which the initial one-dimensional interface is  taken in $H^s(\Rr)$ with $s\in (2, 3)$. These results are {\it $(\mez+\eps)$-derivative above scaling}, i.e. $H^{2+\eps}(\Rr)$ versus $H^{\tdm}(\Rr)$, yet allows for unbounded curvatures. The same result for the periodic case was obtained in \cite{Mat3}. Our  Theorems \ref{theo:1p} and \ref{theo:2p} appear to be the first large-data well-posedness results that cover {\it all subcritical Sobolev spaces} for the Muskat problem with surface tension in {\it a general setting}.

 The paper is organized as follows. In Section \ref{sec:DN}, we recall results on the continuity, paralinearization and contraction estimates for the Dirichlet-Neumann operator, most of which are taken from \cite{ABZ3} and \cite{NgPa} . Sections \ref{sec:1p} and \ref{sec:2p} are devoted to the proofs of Theorems \ref{theo:1p} and \ref{theo:2p}. Appendix \ref{appendix:para} provides a review of the paradifferential calculus machinery. Finally, we prove Proposition \ref{prop:reform} in Appendix \ref{appendix:reform}. 
 \begin{nota}
 Throughout this paper we use $\cF$ to denote a continuous increasing positive nonlinear function which may change from line to line but its dependency on relevant parameters will be indicated. 
 \end{nota}
\section{Results on the Dirichlet-Neumann operator}\label{sec:DN}
We  consider the Dirichlet-Neumann problem associated to the fluid domain $\Omega^-$ defined by \eqref{Omega-} with the time variable being frozen. We shall always assume that at least $\eta\in W^{1, \infty}(\Rr^d)$. Regarding the bottom $\Gamma^-$, we assume either $\Gamma^-=\emptyset$ or  $\Gamma^-=\{(x, \underline b^-(x)): x\in \Rr^d\}$, where $\underline b^-\in \dot W^{1, \infty}(\Rr)$  satisfying $\dist(\Sigma, \Gamma^-)>h>0$.
 Consider the elliptic problem
\bq\label{eq:elliptic}
\begin{cases}
\Delta_{x, y} \phi=0\quad\text{in}~\Omega^-,\\
\phi=f\quad\text{on}~\Sigma,\\
\frac{\p\phi}{\p \nu^-}=0\quad\text{on}~\Gamma^-,
\end{cases}
\eq
where in the case of infinite depth ($\Gamma^-=\emptyset$), the Neumann condition is replaced by the decay condition 
\[\label{vanish:dyphi}
\lim_{y\to -\infty} \na_{x, y}\phi=0.
\]
The Dirichlet-Neuman operator associated to $\Omega^-$ is formally defined by
\bq\label{def:DN}
G^-(\eta) f=\sqrt{1+|\nabla \eta|^2}\frac{\p\phi}{\p n},
\eq
where we recall that $n$ is the upward-pointing unit normal to $\Sigma$. Similarly, if $\phi$ solves the elliptic problem \eqref{eq:elliptic} with $(\Omega^-, \Gamma^-, \nu^-)$ replaced by $(\Omega^+, \Gamma^+, \nu^+)$ then we define 
\begin{equation*}
G^+(\eta) f=\sqrt{1+|\nabla \eta|^2}\frac{\p\phi}{\p n}.
\end{equation*}
Note that $n$ is inward-pointing for $\Omega^+$, making $G^+(\eta)$ a skew-adjoint operator, whereas $G^-(\eta)$ is self-adjoint. In the rest of this section, we only state results for $G^-(\eta)$ since corresponding results for  $G^+(\eta)$ are completely parallel.

The Dirichlet data $f$ for \eqref{eq:elliptic} will be taken  in the following screened fractional Sobolev space (see \cite{LeoTice})
\bq\label{def:wtHmez}
\wt H^\mez_{\Upsilon}(\Rr^d)=\left\{ f\in  \mathcal{S}'(\Rr^d)\cap  L^2_{loc}(\Rr^d): \int_{\Rr^d}\int_{B_{\Rr^d}(0, \Upsilon(x))}\frac{|f(x+x')-f(x)|^2}{|x'|^{d+1}}dx'dx<\infty\right\}/ \Rr,
\eq
where $\Upsilon:\Rr^d\to (0, \infty]$ is a given lower semi-continuous function. For the bottom domain $\Omega^-$, we will choose 
\bq\label{def:frakd}
\Upsilon(x)=
\begin{cases}
 \infty\quad\text{when}\quad \Gamma^-=\emptyset,\\
\mathfrak{d}_-(x):=\frac{\eta(x)-\underline b^-(x)}{2(\| \na_x\eta\|_{L^\infty}+\| \na_x\underline b^-\|_{L^\infty})}\quad\text{when}\quad \underline b^-\in \dot W^{1, \infty}(\Rr^d).
\end{cases}
\eq
Since $\dist(\Sigma, \Gamma^-)>h$, we have
\bq\label{lower:frakd}
\mathfrak{d}_- \ge \frac{h}{2(\| \na_x\eta\|_{L^\infty}+\| \na_x\underline b^-\|_{L^\infty})}.
\eq

We also define the slightly-homogeneous Sobolev spaces
\bq\label{def:dotHs}
 H^{1,\sigma}(\Rr^d)=\{f\in \mathcal{S}'(\Rr^d)\cap L^2_{loc}(\Rr^d): \na f\in H^{\sigma-1}(\Rr^d)\}/~\Rr.
\eq
The continuous embeddings
\bq\label{wtH:dotH}
\dot{H}^\frac{1}{2}(\mathbb{R}^d)= \wt H^\mez_\infty(\Rr^d)\subset \wt H^\mez_{\mathfrak{d}_-}(\Rr^d)\subset \wt H^\mez_1(\Rr^d) = H^{1,\frac{1}{2}}(\Rr^d)
 \eq
hold (see \cite{NgPa}). Here the embedding $\wt H^\mez_{\mathfrak{d}_-}(\Rr^d)\subset \wt H^\mez_1(\Rr^d)$ is due to the lower bound \eqref{lower:frakd}. In addition, if $\underline b^-\in W^{1, \infty}(\Rr^d)$ then according to Theorem 3.13 \cite{LeoTice}, we have  $\wt H^\mez_{\mathfrak{d}_-}(\Rr^d)=\wt H^\mez_1(\Rr^d)$. However, we have only assumed that $\underline b^-\in \dot W^{1, \infty}(\Rr^d)$ to accommodate unbounded bottoms. Nevertheless, Proposition 3.2 \cite{NgPa} implies that for any two surfaces $\eta_1$ and $\eta_2$ in $L^\infty(\Rr^d)$  satisfying $\dist(\eta_j, \underline b^-)>h>0$, the screened Sobolev space $\wt H^\mez_{\frak{d}}(\Rr^d)$, $\frak{d}$ given by \eqref{def:frakd}, is independent of $\eta_j$. This justifies the following notation.
 \begin{nota}
We denote 
\bq\label{wtH-}
\wt H^\mez_{\pm}(\Rr^d)=
\begin{cases}
\wt H^\mez_\infty(\Rr^d)~\quad \text{if}~\Gamma^\pm=\emptyset,\\
\wt H^\mez_{\mathfrak{d}_\pm}(\Rr^d)~\quad \text{if}~\underline b^\pm\in \dot W^{1, \infty}(\Rr^d),
\end{cases}
\eq
where $\mathfrak{d}_+$ is defined similarly to $\mathfrak{d}_-$ with $\underline b^-$ replaced by $\underline{b}^+$.
For $s>\mez$, we set
\begin{equation}\label{def:wtHs}
\widetilde{H}^s_\pm(\Rr^d)=\widetilde{H}^{\frac{1}{2}}_\pm(\Rr^d)\cap H^{1,s}(\Rr^d).
\end{equation}
\end{nota}
 It was proved in \cite{LeoTice, Stri} that  there exist  unique continuous trace operators
\bq\label{Tr}
\mathrm{Tr}_{\Omega^\pm\to \Sigma}: \dot H^1(\Omega^\pm)\to \wt H^\mez_\pm(\Sigma)\equiv \wt H^\mez_\pm(\Rr^d)
\eq
with norm depending only on $\| \eta\|_{\dot W^{1, \infty}(\Rr^d)}$ and $\| b^\pm\|_{\dot W^{1, \infty}(\Rr^d)}$.
The Sobolev spaces $\wt H^s_\pm$ are homogeneous and tailored to the boundaries $\Gamma^\pm$. This is crucial for  the two-phase Muskat problem since the traces $f^\pm$ obtained by solving \eqref{system:fpm} are only determined up to additive constants. Employing the lifting results in \cite{LeoTice, Stri} for homogeneous Sobolev spaces, it was proved in \cite{NgPa} that for each $f\in \wt H^\mez_-$, there exists a unique variational solution $\phi$ to \eqref{eq:elliptic}. This in turn implies that $G^-(\eta)f\in H^{-\mez}$ provided that $\eta\in \dot W^{1, \infty}$. 
For continuity estimates in higher Sobolev norms, we shall appeal  to the following theorem.
\begin{theo}[\protect{\cite{ABZ3, NgPa}}]\label{theo:estDN}
Let~$d\ge 1$, $s>1+\frac{d}{2}$ and $\frac 1 2 \le \sigma \leq s$. Consider  $f\in \wt H^\sigma_{-}(\Rr^d)$  and  $\eta\in H^s(\Rr^d)$ with $\dist(\eta, {\Gamma^{-}})\ge h>0$. Then we have 
$G^{-}(\eta)f\in H^{\sigma-1}(\Rr^d)$ and
\begin{equation}\label{est:DN}
\| G^{-}(\eta)f \|_{H^{\sigma-1}}\le 
\mathcal{F}\big(\| \eta \|_{H^s}\big)\Vert f\Vert_{\widetilde{H}^\sigma_-}
\end{equation}
for some $\cF:\Rr^+\to \Rr^+$  depending only on $(s, \sigma, h)$ and $\| \underline b^-\|_{\dot W^{1, \infty}(\Rr^d)}$.
\end{theo}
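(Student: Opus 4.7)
The plan is to reduce the elliptic problem to a variable-coefficient equation on a fixed slab via a straightening diffeomorphism, then propagate Sobolev regularity via a paradifferential factorization of the transformed operator. First, I would construct a diffeomorphism $\rho: \Rr^d\times(-1,0)\to \Omega^-$ with $\rho(x,0)=(x,\eta(x))$ and, in the Lipschitz-bottom case, $\rho(x,-1)=(x,\underline b^-(x))$ (with an exponentially decaying vertical scale for the infinite-depth case). Following the Lannes--Alazard--M\'etivier scheme, I would build $\rho$ by regularizing $\eta$ at scale $|z|$ so that $\rho$ effectively gains half a vertical derivative. Under this change of variables, $\Delta_{x,y}\phi=0$ becomes a divergence-form equation $\cnx_{x,z}(A\,\na_{x,z}v)=0$ for $v=\phi\circ\rho$, where the matrix $A$ is built from $\na_x\eta,\na_x\underline b^-$, and $z$, and inherits uniform ellipticity from the assumption $\dist(\eta,\Gamma^-)\ge h$.

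Next, I would produce a variational solution in $\dot H^1(\Omega^-)$ by Lax--Milgram, using the lifting of the screened Sobolev datum $f\in\wt H^\mez_-$ into the energy class via the trace theory of Leoni--Tice (already recalled in \eqref{Tr}). This yields the base case $\sigma=\mez$: $G^-(\eta)f\in H^{-\mez}$ with norm controlled by $\cF(\|\eta\|_{H^s})\|f\|_{\wt H^\mez_-}$. To bootstrap to general $\sigma\in[\mez,s]$, I would paralinearize the coefficients in the flattened equation: at leading order the operator takes the form
\bq
\p_z^2 v + \alpha(x,z)\Delta_x v + \beta(x,z)\cdot\na_x\p_z v + \text{(lower order)}=0,
\eq
whose principal symbol factors in the sense of paradifferential calculus as a product of forward and backward parabolic operators
\bq
(\p_z-T_{a})(\p_z - T_{A})\quad\text{modulo smoothing,}
\eq
where $A$ and $a$ are first-order elliptic symbols with real parts of opposite sign and principal part $\pm\ld(x,\xi)$. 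Composition, symbolic, and smoothing estimates for $T_a,T_A$ then propagate Dirichlet regularity $H^\sigma$ at $z=0$ downward into the slab, and evaluating $\p_z v$ at $z=0$ recovers $G^-(\eta)f\in H^{\sigma-1}$ with the claimed bound. Shape and $L^\infty$-Lipschitz norms of $\eta$ enter only through the admissible nonlinear function $\cF$.

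The main obstacle is that for $s>1+\frac{d}{2}$ one has $\na\eta\in L^\infty$ barely, so the coefficients of the straightened equation are not classically regular and standard Schauder or $H^k$ elliptic regularity is unavailable. Paradifferential calculus is exactly the mechanism that bypasses this: paraproducts absorb rough coefficients into principal symbols at no cost on the Sobolev scale, and the forward/backward parabolic factorization turns the elliptic problem on the slab into two first-order evolutions in $z$ to which precise energy estimates apply. A secondary delicacy is the low-frequency behavior when $\Gamma^-\ne\emptyset$: the homogeneous nature of $\wt H^\mez_-$ must be tracked through the construction so that the final estimate respects the correct space at low frequencies, and the lower bound \eqref{lower:frakd} on $\mathfrak{d}_-$ must be used to control the Poincar\'e-type constants appearing in the Lax--Milgram step.
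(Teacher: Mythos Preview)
The paper does not prove this theorem; it is quoted from \cite{ABZ3, NgPa} (see the sentence immediately following Theorem~\ref{paralin:ABZ}: ``Theorems \ref{theo:estDN} and \ref{paralin:ABZ} were first obtained in \cite{ABZ3} \ldots\ and extended to $f\in \wt H^\sigma_-$ as a special case of Theorem 3.18 in \cite{NgPa}''). Your outline is precisely the strategy carried out in those references: straighten the domain via a regularizing diffeomorphism, obtain the $\sigma=\mez$ base case variationally using the Leoni--Tice trace/lifting theory for $\wt H^\mez_-$, then factor the flattened elliptic operator paradifferentially as a forward/backward product $(\p_z-T_a)(\p_z-T_A)$ and propagate regularity in $z$. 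So your proposal is correct and matches the cited proof.

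One small correction of emphasis: for $s>1+\tfrac d2$ you have $\na\eta\in H^{s-1}\hookrightarrow C^\delta_*$ for some $\delta>0$, so the coefficients are genuinely H\"older, not ``barely $L^\infty$''; this $\delta$ margin is exactly what makes the symbolic calculus in Theorem~\ref{theo:sc} (ii) applicable and is the source of the $\delta$-gain in Theorem~\ref{paralin:ABZ}.
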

Since the bottoms $\underline b^\pm$ are fixed in $\dot W^{1, \infty}(\Rr^d)$, we shall omit the dependence on $\| \underline b^\pm\|_{\dot W^{1, \infty}(\Rr^d)}$ in the remainder of this paper.

It is well known that for smooth domains, the Dirichlet-Neumann operator is a first-order pseudo-differential operator whose principal symbol is given by 
\bq\label{ld}
\lambda(x, \xi)=\Big((1+|\nabla\eta(x)|^2)|\xi|^2-(\nabla\eta(x)\cdot \xi)^2\Big)^\mez.
\eq
The one-dimensional case is special since $\lambda(x, \xi)=|\xi|$ is $x$-independent. The following result provides error estimates when paralinearizing $G^-(\eta)$ by $T_\ld$,  which will be the key tool for paralinearizing the Muskat problem with surface tension.
\begin{theo}[\protect{\cite{ABZ3, NgPa}}]\label{paralin:ABZ}
Let $d\ge 1$, $s>1+\frac d2$ and  $\sigma\in [\mez, s-\mez]$. Fix a real number $\delta \in \big(0,  s-1-\frac d2\big)$, $\delta\le \mez$. 
If $f\in \wt H^\sigma_-(\Rr^d)$ and $\eta\in H^s(\Rr^d)$   with $\dist(\eta, \Gamma^-)>h>0$ then  
\begin{align}
&G^{-}(\eta)f=T_\lambda f+R^-(\eta)f,\\
\label{RDN:ABZ}
&\| R^-(\eta)f\|_{H^{\sigma-1+\delta}}\le \cF(\| \eta\|_{H^s})\Vert f\Vert_{\widetilde{H}^\sigma_-}
\end{align}
for some $\cF:\Rr^+\to \Rr^+$  depending only on $(s, \sigma, \delta, h)$.
\end{theo}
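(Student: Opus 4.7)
The plan is to reduce the Dirichlet-Neumann problem \eqref{eq:elliptic} to a paradifferential elliptic equation on a flat strip and then factor that equation parabolically. First I would straighten $\Omega^-$ via a diffeomorphism $(x,z) \mapsto (x, \rho(x,z))$ mapping the strip $\Rr^d \times (-1, 0)$ (or $\Rr^d \times (-\infty, 0)$ in the infinite-depth case) onto $\Omega^-$, with $\rho(\cdot, 0) = \eta$ and, in the finite-depth case, $\rho(\cdot, -1) = \underline b^-$. A standard regularized choice, such as $\rho(x,z) = (1+z)\eta(x) - z\underline b^-(x) + \eps z \L{D_x}^{-1}(\eta-\underline b^-)$ for an appropriately chosen regularizing term, ensures that $\na_{x,z}\rho$ has the same Sobolev regularity as $\na\eta$ and that $\p_z\rho \ge c(h)>0$. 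Setting $v(x,z) = \phi(x, \rho(x,z))$, the equation $\Delta_{x,y}\phi = 0$ becomes $\p_z^2 v + \alpha \Delta_x v + \beta \cdot \na_x \p_z v + \gamma \p_z v = 0$ in the strip, with coefficients $\alpha, \beta, \gamma$ that are smooth functions of $\na_{x,z}\rho$ and thus in the Hölder-Zygmund class $C^{s - d/2}_*$ since $s > 1 + d/2$.

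Next I would paralinearize the coefficients using Bony's decomposition $a w = T_a w + T_w a + R(a,w)$, noting that the $T_w a$ and $R(a,w)$ terms gain $\min(s-1-d/2, \mez)$ derivatives when paired with the hypotheses on $f$. This recasts the interior equation as $\p_z^2 v + T_\alpha \Delta_x v + T_\beta \cdot \na_x \p_z v + T_\gamma \p_z v = F$ with $F$ smoothing by $\delta$ derivatives in $L^2_z H^{\sigma-1+\delta}_x$. The key step is then the parabolic factorization: one seeks first-order symbols $a(x,z,\xi)$ and $A(x,z,\xi)$ with $\RE a \le -c\L\xi$ and $\RE A \ge c\L\xi$ so that, up to symbols of order $-\delta$,
\bq
\p_z^2 + T_\alpha \Delta_x + T_\beta \cdot \na_x \p_z + T_\gamma \p_z \;=\; (\p_z - T_A)(\p_z - T_a) + T_r,
\eq
where $T_r$ has order $1-\delta$. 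Matching principal and subprincipal symbols yields a Riccati equation whose leading-order solution at $z=0$ is exactly $a_0 = -\lambda$ with $\lambda$ given by \eqref{ld}, because a direct computation $\alpha|\xi|^2 + i\beta\cdot\xi \tau + \tau^2 = 0$ evaluated at $z=0$ (where $\alpha = (1+|\na\eta|^2)$ and $\beta = -2\na\eta$, after normalizing by $\p_z\rho$) recovers $\tau^2 = -\lambda(x,\xi)^2$.

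Finally, since $\RE a \le -c\L\xi$, the ``backward'' parabolic equation $(\p_z - T_a)v|_{z=0} = w$, with $v$ bounded as $z\to -\infty$ (respectively satisfying the Neumann condition at $z=-1$), can be solved by a standard energy estimate using the paradifferential Gårding inequality, giving the boundary relation $\p_z v(\cdot,0) - T_a f = \rho(\eta) f$ with $\|\rho(\eta) f\|_{H^{\sigma-1+\delta}} \le \cF(\|\eta\|_{H^s})\|f\|_{\widetilde H^\sigma_-}$. Translating back to $(x,y)$ coordinates via $G^-(\eta) f = \frac{1+|\na\eta|^2}{\p_z\rho}\p_z v|_{z=0} - \na\eta\cdot\na_x f$ and using the paradifferential product rules to rewrite the right-hand side as $T_\lambda f + R^-(\eta)f$ yields \eqref{RDN:ABZ}. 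The main obstacle is executing the factorization and Riccati analysis at the minimal regularity $s > 1+\tfrac d2$: the symbols $a, A$ belong to the paradifferential class $\Gamma^1_{\rho}$ only with $\rho = s - 1 - \tfrac d2$, which constrains the smoothing gain in the remainder to $\delta \le \min(\mez, s-1-\tfrac d2)$, exactly matching the hypothesis on $\delta$. The screened Sobolev framework for $f\in \widetilde H^\sigma_-$ enters only through the lifting \eqref{Tr}, after which all estimates are standard energy estimates in the straightened strip.
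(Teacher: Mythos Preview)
The paper does not prove this theorem; it cites it from \cite{ABZ3} (for $f\in H^\sigma$) and \cite{NgPa} (for the extension to $f\in\widetilde H^\sigma_-$), as stated in the paragraph immediately following the theorem. Your sketch is essentially the strategy carried out in those references: flatten $\Omega^-$ by a regularizing diffeomorphism, paralinearize the resulting variable-coefficient elliptic equation in the strip, factor it parabolically via a Riccati construction whose principal root at $z=0$ is $-\lambda$, and read off the boundary relation; the screened space $\widetilde H^\sigma_-$ enters only through the trace/lifting in the variational setup. So your approach is the same as the one the paper relies on.

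Two minor cautions on details that would need correcting in a full write-up. First, the explicit $\rho$ you propose does not actually gain regularity in $z$: the point of the ABZ diffeomorphism is that $\rho(\cdot,z)$ is obtained by convolving $\eta$ with a mollifier at scale $|z|$ (a Poisson-type extension), so that $\na_{x,z}\rho\in C^0_z H^{s-1}_x$ with the $\tfrac12$-derivative gain near $z=0$ needed for the factorization remainder to land in $H^{\sigma-1+\delta}$; a linear interpolation plus a fixed smoothing operator will not do this. Second, in the finite-depth case one does not flatten all the way to $\Gamma^-$ (only $\underline b^-\in\dot W^{1,\infty}$ is assumed); instead one flattens a collar $\{\eta-h<y<\eta\}$ and uses interior elliptic regularity together with the variational $\dot H^1$ bound to control the data at the artificial boundary $z=-1$. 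These are standard fixes from \cite{ABZ3,NgPa} and do not affect the overall architecture you describe.
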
 
Theorems \ref{theo:estDN} and \ref{paralin:ABZ} were first obtained in \cite{ABZ3} (see Theorem 3.12 and Proposition 3.13 therein) when $f\in H^\sigma$, and extended to $f\in \wt H^\sigma_-$ as a special case of Theorem 3.18 in \cite{NgPa}. It surprisingly  turns out that the case with surface tension  requires a less precise paralinearization  compared to the one needed in \cite{NgPa} for the case without surface tension. This is in contrast with the water waves problem \cite{ABZ1, ABZ3}.

Finally, we will need  contraction estimates for the Dirichlet-Neumann operator  in order to obtain uniqueness and stability of solutions.
\begin{theo}[\protect{\cite[Proposition 3.31]{NgPa}}]\label{contract:DN}
Let  $s>1+\frac{d}{2}$ with $d\ge 1$. Consider $f\in \wt H^{s-\mez}_-(\Rr^d)$ and  $\eta_1$, $\eta_2\in H^s(\Rr^d)$ with $\dist(\eta_j, \Gamma^-)\ge h$ for $j=1, 2$. Then  there exists $\cF:\Rr^+\to \Rr^+$ depending only on $(s, h)$ such that
\begin{equation}\label{contraction:DN}
\| G^{-}(\eta_1)f-G^{-}(\eta_2)f  \|_{H^{s-\tdm}}\le 
\mathcal{F}\big(\| (\eta_1, \eta_2) \|_{H^s}\big)\| \eta_1-\eta_2\|_{H^s}\| f\|_{\wt H^{s-\mez}_{-}}.
\end{equation}
\end{theo}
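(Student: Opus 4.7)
The plan is to interpolate in $\eta$ and control the shape derivative of $G^-$. Setting $\eta_\tau=(1-\tau)\eta_2+\tau\eta_1$ and $\delta\eta=\eta_1-\eta_2$, the convex combination preserves $\dist(\eta_\tau,\Gamma^-)\ge h$ and $\|\eta_\tau\|_{H^s}\le\|(\eta_1,\eta_2)\|_{H^s}$, so Theorems \ref{theo:estDN} and \ref{paralin:ABZ} apply uniformly in $\tau\in[0,1]$. The fundamental theorem of calculus gives
$$
G^-(\eta_1)f-G^-(\eta_2)f=\int_0^1 \frac{d}{d\tau}\big(G^-(\eta_\tau)f\big)\, d\tau,
$$
and it suffices to bound the integrand in $H^{s-\tdm}$ uniformly in $\tau$ by $\cF(\|\eta_\tau\|_{H^s})\|f\|_{\wt H^{s-\mez}_-}\|\delta\eta\|_{H^s}$.

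The key ingredient is the shape-derivative formula, which one derives first for smooth $(\eta,f)$ by straightening $\Omega^-_{\eta_\tau}$ via a $\tau$-family of diffeomorphisms and differentiating the elliptic equation \eqref{eq:elliptic} in $\tau$, and extends to the present regularity by density using the continuity of Theorem \ref{theo:estDN}:
\bq\label{plan:shape}
\frac{d}{d\tau}\big(G^-(\eta_\tau)f\big)=-G^-(\eta_\tau)(B_\tau\,\delta\eta)-\cnx(V_\tau\,\delta\eta),
\eq
where $B_\tau$ and $V_\tau$ are the vertical and horizontal traces on $\Sigma_{\eta_\tau}$ of the velocity $\na_{x,y}\phi_\tau$ of the harmonic extension of $f$ to $\Omega^-_{\eta_\tau}$:
$$
B_\tau=\frac{G^-(\eta_\tau)f+\na\eta_\tau\cdot\na f}{1+|\na\eta_\tau|^2},\qquad V_\tau=\na f-B_\tau\na\eta_\tau.
$$
Theorem \ref{theo:estDN} applied with $\sigma=s-\mez$ gives $\|B_\tau\|_{H^{s-\tdm}}+\|V_\tau\|_{H^{s-\tdm}}\le \cF(\|\eta_\tau\|_{H^s})\|f\|_{\wt H^{s-\mez}_-}$.

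The main obstacle is a half-derivative gap in the naive product bound: estimating $B_\tau\,\delta\eta$ and $V_\tau\,\delta\eta$ in $H^{s-\tdm}$ via $H^{s-\tdm}\cdot H^s\hookrightarrow H^{s-\tdm}$ (which holds since $s>1+\frac d2$) and then applying the order-one operators $G^-(\eta_\tau)$ and $\cnx$ would yield only $H^{s-\frac{5}{2}}$. The missing $\mez$ derivative is recovered by exploiting the paradifferential structure of \eqref{plan:shape}: decompose $B_\tau\,\delta\eta$ and $V_\tau\,\delta\eta$ via Bony's paraproduct, paralinearize $G^-(\eta_\tau)=T_\lambda+R^-(\eta_\tau)$ using Theorem \ref{paralin:ABZ}, and apply the symbolic calculus of Appendix \ref{appendix:para} to combine the resulting terms; because $s>1+\frac d2$, the paraproduct remainders and symbolic-calculus errors all gain at least $\mez$ derivative over the naive count, producing the claimed $H^{s-\tdm}$ bound. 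A recurring technical point is that the intermediate estimates involving $G^-(\eta_\tau)$ acting on products must be set in the screened spaces $\wt H^\sigma_-$, using the embedding \eqref{wtH:dotH} and the lower bound \eqref{lower:frakd} on $\mathfrak{d}_-$, exactly as handled in \cite{NgPa}.
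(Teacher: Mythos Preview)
The paper does not prove this result; it simply quotes \cite[Proposition~3.31]{NgPa}. Your proposal therefore goes beyond what the present paper does, and should be compared to the argument in \cite{NgPa}. The overall strategy you describe---interpolate linearly between $\eta_2$ and $\eta_1$, invoke the shape-derivative identity \eqref{plan:shape}, and analyze the right-hand side paradifferentially---is indeed a standard route, used e.g.\ in \cite{LannesJAMS, ABZ3}.

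There is, however, a genuine gap in your handling of the half-derivative deficit. You assert that after Bony-decomposing $B_\tau\delta\eta$, $V_\tau\delta\eta$ and paralinearizing $G^-(\eta_\tau)=T_\lambda+R^-$, ``the paraproduct remainders and symbolic-calculus errors all gain at least $\tfrac12$ derivative''. This is not true in the full range $s>1+\tfrac d2$. The dangerous contributions are $-G^-(\eta_\tau)(T_{\delta\eta}B_\tau)$ and $-\cnx(T_{\delta\eta}V_\tau)$: since $B_\tau,V_\tau\in H^{s-\tdm}$ only, the remainder $R^-(\eta_\tau)(T_{\delta\eta}B_\tau)$ gains at most $\delta\le\tfrac12$ by Theorem~\ref{paralin:ABZ}, landing in $H^{s-2}$ rather than $H^{s-\tdm}$; and the commutator $[T_\lambda,T_{\delta\eta}]B_\tau$ gains only $\rho<s-1-\tfrac d2$ via Theorem~\ref{theo:sc}(ii), which can be arbitrarily small. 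Purely paradifferential bookkeeping does not close.

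What your sketch is missing is the specific algebraic cancellation
\[
G^-(\eta_\tau)B_\tau+\cnx V_\tau=0\quad(\text{up to a smoothing contribution from }\Gamma^-),
\]
which follows from the harmonicity of the extension $\phi_\tau$ (indeed $\partial_y\phi_\tau$ is itself harmonic with trace $B_\tau$). This identity is what kills the principal bad term after writing
\[
-G^-(\eta_\tau)(B_\tau\delta\eta)-\cnx(V_\tau\delta\eta)=-\big[G^-(\eta_\tau)(B_\tau\delta\eta)-\delta\eta\,G^-(\eta_\tau)B_\tau\big]-\delta\eta\big(G^-(\eta_\tau)B_\tau+\cnx V_\tau\big)-V_\tau\cdot\nabla\delta\eta,
\]
reducing the problem to a genuine commutator $[G^-(\eta_\tau),\delta\eta]B_\tau$ and the product $V_\tau\cdot\nabla\delta\eta\in H^{s-\tdm}$. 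Alternatively, the proof in \cite{NgPa} proceeds by straightening $\Omega^-_{\eta_j}$ and directly estimating the difference of the variable-coefficient elliptic problems, which sidesteps this cancellation issue altogether. Either way, your current sketch does not contain the mechanism that actually recovers the missing half-derivative.
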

\section{Proof of Theorem \ref{theo:1p}}\label{sec:1p}
\subsection{Paradifferential reduction}
We assume that $\eta\in Z^s(T)$ with $s>1+\frac d2$  is a solution of \eqref{eq:eta} and satisfies 
\bq\label{cd:h}
\inf_{t\in [0, T]}\dist(\eta(t), \Gamma^-)>h>0.
\eq
The next proposition shows that equation \eqref{eq:eta} can be reduced to an explicit third-order parabolic equation with a smoothing right-hand side.
\begin{prop}\label{prop:reduce}
Set 
\bq\label{def:ell}
\ell(x, \xi)=(1+|\na\eta|^2)^{-\mez}\Big(|\xi|^2-\frac{(\na \eta\cdot \xi)^2}{(1+|\na \eta|^2)}\Big).
\eq
For $\delta\in (0, s-1-\frac{d}{2})$ and $\delta\le \mez$,  there exists $\cF:\Rr^+\to \Rr^+$ depending only on $(h, s, \delta)$ such that
\begin{align}\label{eq:reduce}
&\p_t\eta=-\frac{\s}{\mu^-}T_{\ld \ell}\eta +g,\\ \label{estg:reduce}
&\| g\|_{H^{s-\tdm+\delta}}\le  \cF(\| \eta\|_{H^s})(\frac{\s}{\mu^-}\| \eta\|_{H^{s+\tdm}}+\frac{\rho^-\g}{\mu^-}\|\eta \|_{H^{s-\mez+\delta}}).
\end{align}
\end{prop}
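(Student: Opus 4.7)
\textbf{Proof plan for Proposition \ref{prop:reduce}.}

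The plan is to express \eqref{eq:eta} in the parabolic form \eqref{eq:reduce} by paralinearizing both the Dirichlet-Neumann operator $G^{-}(\eta)$ and the mean curvature $H(\eta)$, and then composing symbols via the paradifferential calculus. Writing $f=\s H(\eta)+\rho^-\g\eta$ so that $\mu^-\p_t\eta=-G^-(\eta)f$, the target identity arises from the schematic computation
\[
G^-(\eta)\bigl(\s H(\eta)\bigr)=\s\, T_{\ld} T_\ell\,\eta+(\text{remainder})=\s\, T_{\ld\ell}\,\eta+(\text{lower-order errors}),
\]
with every error term (together with the contribution from $\rho^-\g\, G^-(\eta)\eta$) collected into $g$.

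The main algebraic input is a tame paralinearization of the mean curvature,
\[
H(\eta)=T_\ell\,\eta+r_H,\qquad \|r_H\|_{H^{s-\mez+\delta}}\le \cF(\|\eta\|_{H^s})\|\eta\|_{H^{s+\tdm}}.
\]
I would derive this from $H(\eta)=-\cnx_x F(\na_x\eta)$ with $F(v)=v/\sqrt{1+|v|^2}$ by applying Bony's paralinearization $F(\na\eta)=T_{F'(\na\eta)}\na\eta+R_F$ in its tame form (so that $\|R_F\|_{H^{2s-\mez-\frac{d}{2}}}\le \cF(\|\eta\|_{H^s})\|\eta\|_{H^{s+\tdm}}$) and then identifying the principal symbol of $-\cnx_x\bigl(T_{F'(\na\eta)}\na\eta\bigr)$ as exactly $\ell(x,\xi)$ from \eqref{def:ell} by direct computation. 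The lower-order errors from commuting $\p_{x_j}$ with the paradifferential operators $T_{F'(\na\eta)}$ are first-order operators whose coefficients lie in $H^{s-2}$, and their action on $\eta$ is controlled in $H^{s-\mez+\delta}$ precisely because $\delta<s-1-\frac{d}{2}$.

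With this in hand, I would decompose
\[
G^-(\eta)f=\s\, G^-(\eta)T_\ell\eta+\s\, G^-(\eta)r_H+\rho^-\g\, G^-(\eta)\eta.
\]
For the first piece, Theorem \ref{paralin:ABZ} with $\sigma=s-\mez$ gives $G^-(\eta)T_\ell\eta=T_{\ld}T_\ell\eta+R^-(\eta)(T_\ell\eta)$, and the error is bounded in $H^{s-\tdm+\delta}$ once one checks $T_\ell\eta\in\wt H^{s-\mez}_-$; the spectral cutoff in the paradifferential quantization places $T_\ell\eta$ at high frequencies so that $\|T_\ell\eta\|_{\wt H^{s-\mez}_-}\lesssim \|T_\ell\eta\|_{H^{s-\mez}}\le \cF(\|\eta\|_{H^s})\|\eta\|_{H^{s+\tdm}}$. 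The symbolic calculus then yields $T_{\ld}T_\ell=T_{\ld\ell}+E$ with $E$ of order $3-\rho$ for any $\rho\in(\delta,\min(1,s-1-\frac{d}{2}))$, so that $\|E\eta\|_{H^{s-\tdm+\delta}}\le \cF(\|\eta\|_{H^s})\|\eta\|_{H^{s+\tdm}}$. For the remaining two pieces, Theorem \ref{theo:estDN} with $\sigma=s-\mez+\delta\in[\mez,s]$ gives $\|G^-(\eta)r_H\|_{H^{s-\tdm+\delta}}\le \cF(\|\eta\|_{H^s})\|\eta\|_{H^{s+\tdm}}$ (using the paralinearization of $r_H$ above) and $\|G^-(\eta)\eta\|_{H^{s-\tdm+\delta}}\le \cF(\|\eta\|_{H^s})\|\eta\|_{H^{s-\mez+\delta}}$. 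Summing and dividing by $\mu^-$ yields \eqref{estg:reduce}.

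The main obstacle is the tame paralinearization of $H(\eta)$ with the sharp regularity gain: the remainder $r_H$ must be smoother than the target space $H^{s-\tdm+\delta}$ by exactly one derivative (so that it survives one further application of the order-$1$ operator $G^-(\eta)$), which is just barely possible when $\delta<s-1-\frac{d}{2}$. This threshold coincides with the Sobolev embedding index ensuring $\na\eta\in C^\delta_*(\Rr^d)$, i.e.\ the coefficient regularity available for the symbolic calculus of $\ell(x,\xi)$. Balancing the Bony remainder regularity against a linear high-norm factor $\|\eta\|_{H^{s+\tdm}}$ with a nonlinear low-norm coefficient $\cF(\|\eta\|_{H^s})$ requires the \emph{tame} form of Bony's paralinearization rather than its standard symmetric version, and is the key technical step of the proof.
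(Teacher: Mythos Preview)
Your proposal is correct and follows essentially the same route as the paper: paralinearize $H(\eta)$ to $T_\ell\eta$ via the tame Bony linearization of $F(\nabla\eta)=\nabla\eta/\sqrt{1+|\nabla\eta|^2}$, paralinearize $G^-(\eta)$ to $T_\lambda$ via Theorem~\ref{paralin:ABZ}, and compose symbols. The only difference is the order of these two steps: the paper first writes $G^-(\eta)H(\eta)=T_\lambda H(\eta)+R^-(\eta)H(\eta)$ and then paralinearizes $H(\eta)$ inside $T_\lambda$, whereas you first split $H(\eta)=T_\ell\eta+r_H$ and then apply $G^-(\eta)$ to each piece. Both orderings produce the same principal term $T_{\lambda\ell}\eta$ and remainders of the required regularity; your version just requires the extra (easy) observation that $T_\ell\eta$ and $r_H$, being supported at high frequencies or lying in $H^{s-\frac12+\delta}$, belong to the screened space $\widetilde H^\sigma_-$ so that Theorems~\ref{theo:estDN} and~\ref{paralin:ABZ} apply to them.
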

\begin{proof}
Let us rewrite \eqref{eq:eta} as
\bq\label{eq:eta:100}
\p_t\eta=- \frac{\s}{\mu^-} G^{-}(\eta)H(\eta)-\frac{\rho^-\g}{\mu^-} G^{-}(\eta)\eta.
\eq
Theorem \ref{theo:estDN} applied with $\sigma=s-\mez+\delta$ gives 
\bq\label{reduce:estlow}
\| G^-(\eta)\eta\|_{H^{s-\tdm+\delta}}\le \cF(\| \eta\|_{H^s})\Vert \eta\Vert_{H^{s-\mez+\delta}}.
\eq
Regarding $G^-(\eta)H(\eta)$, we apply Theorem \ref{paralin:ABZ} with $\sigma=s-\mez$ and   \eqref{est:F(u):S}  with $s:=s+\mez$ to have
\[
G^{-}(\eta)H(\eta)=T_\lambda H(\eta)+R^-(\eta)H(\eta)
\]
with 
\[
\| R^-(\eta)H(\eta)\|_{H^{s-\tdm+\delta}}\le \cF(\| \eta\|_{H^s})\Vert H(\eta)\Vert_{H^{s-\mez}}\les \cF(\| \eta\|_{H^s})\Vert \eta\Vert_{H^{s+\tdm}}.
\]
The rest of the proof is devoted to control the main term $T_\lambda H(\eta)$. We paralinearize the mean-curvature operator $H(\eta)$ by means of Theorem \ref{paralin:nonl} with $\mu=s+\mez$, $\tau=\delta$:
\bq\label{matrixM}
\frac{\na\eta}{\sqrt{1+|\na\eta|^2}}=T_M\na \eta+f_1,\quad M=\frac{1}{(1+|\na\eta|^2)^\mez}\text{Id}-\frac{\na\eta\otimes \na\eta}{(1+|\na\eta|^2)^\tdm}
\eq
where $\text{Id}$ is the identity matrix and $f_1$ satisfies 
\[
\| f_1\|_{H^{s+\mez+\delta}}\le \cF(\| \eta\|_{H^s})\| \na\eta\|_{H^{s+\mez}}\| \na \eta\|_{C^\delta_*}\les \cF(\| \eta\|_{H^s})\| \eta\|_{H^{s+\tdm}}.
\]
Consequently,
\[
H(\eta)=-\cnx\big(\frac{\na\eta}{\sqrt{1+|\na\eta|^2}}\big)=T_{M\xi\cdot\xi}\eta+T_{-i(\cnx M)\cdot \xi}\eta-\cnx f_1,
\]
where we note that $M\xi \cdot \xi=\ell$. To estimate $T_{-(\cnx M)}\cdot \na\eta$ we use \eqref{boundpara} and the fact that 
\bq\label{estM:Hs-1}
\| M-\text{Id}\|_{H^{s-1}}\le \cF(\|\eta\|_{H^s}),
\eq
 yielding
\[
\begin{aligned}
\|  T_{\cnx M}\cdot \na\eta\|_{H^{s-\mez+\delta}}&\les \| \cnx M\|_{H^{s-2}}\| \na \eta\|_{H^{s+\mez}}\les \cF(\| \eta\|_{H^s})\| \eta\|_{H^{s+\tdm}}.
\end{aligned}
\]
We thus obtain
\bq\label{paralin:H}
\| H(\eta)-T_\ell\eta\|_{H^{s-\mez+\delta}}\le \cF(\| \eta\|_{H^s})\| \eta\|_{H^{s+\tdm}}.
\eq
Since $M^1_\delta(\ld)+M^2_\delta(\ell)\le \cF(\| \eta\|_{H^s})$ (see Lemma \ref{lemm:symbol}), Theorem \ref{theo:sc} (ii) yields that $T_\ld T_\ell-T_{\ld \ell}$ is of order $3-\delta$ and that
\[
\| (T_\ld T_\ell-T_{\ld \ell})\eta\|_{H^{s-\tdm+\delta}}\le\cF(\| \eta\|_{H^s})\| \eta\|_{H^{s+\tdm}}.
\]
Putting together the above considerations we arrive at
\bq
\| G^{-}(\eta)H(\eta)-T_{\ld \ell}\eta\|_{H^{s-\tdm+\delta}}\le\cF(\| \eta\|_{H^s})\| \eta\|_{H^{s+\tdm}}
\eq
which combined with \eqref{reduce:estlow} and \eqref{eq:eta:100} concludes the proof.
\end{proof}
\begin{rema}
In view of \eqref{ld} and \eqref{def:ell} we have 
\bq\label{ldell}
\ld \ell=(1+|\na \eta|^2)^{-\tdm}\ld^3\ge \frac{|\xi|^3}{(1+\|\na \eta\|_{L^\infty})^\tdm}
\eq
which shows that $\ld \ell$ is elliptic so long as $\eta\in \dot W^{1, \infty}$.
\end{rema}
\subsection{ A priori estimates}
Using the reduction in Proposition \ref{prop:reduce} and the symbolic calculus for paradifferential operators, we derive a closed a priori  estimate for $\eta$ in $Z^s(T)$:
\begin{prop}\label{prop:aprioriHs}
Let $s>1+\frac d2$. Assume that $\eta\in Z^s(T)$ is a solution of \eqref{eq:eta} such that \eqref{cd:h} is satisfied. There exists $\cF:\Rr^+\to\Rr^+$ depending only on $(h, s,  \frac{\rho^-\g}{\mu^-}, \frac{\s}{\mu^-})$ such that 
\bq\label{apriori:Hs}
\| \eta\|_{Z^s(T)}\le \cF\Big(\| \eta(0)\|_{H^s}+T\cF\big(\| \eta\|_{Z^s(T)}\big)\Big).
\eq
\end{prop}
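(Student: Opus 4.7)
The plan is an $H^s$ parabolic energy estimate built on the reduction \eqref{eq:reduce}. Setting $\Ld^s=(1-\Delta)^{s/2}$ and using \eqref{eq:reduce},
\[
\mez\frac{d}{dt}\|\eta\|_{H^s}^2+\frac{\s}{\mu^-}(T_{\ld\ell}\Ld^s\eta,\Ld^s\eta)_{L^2}+\frac{\s}{\mu^-}([\Ld^s,T_{\ld\ell}]\eta,\Ld^s\eta)_{L^2}=(g,\eta)_{H^s}.
\]
The goal is to extract a coercive contribution $c\|\eta\|_{H^{s+\tdm}}^2$ from the first paradifferential term and to absorb the other two terms into $\epsilon\|\eta\|_{H^{s+\tdm}}^2+\cF(\|\eta\|_{H^s})\|\eta\|_{H^s}^2$, choosing $\epsilon$ small enough to be swallowed by the coercive piece.

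Three ingredients do the job. First, G\aa{}rding's inequality for paradifferential operators: the real symbol $\ld\ell$ is elliptic of order $3$ by \eqref{ldell} and lies in $\Gamma^3_{\rho_0}$ for some $\rho_0>0$ (since $s>1+\frac d 2$ gives positive H\"older regularity of $\ld\ell$ in $x$), so
\[
(T_{\ld\ell}\Ld^s\eta,\Ld^s\eta)_{L^2}\ge c(\|\na\eta\|_{L^\infty})\|\eta\|_{H^{s+\tdm}}^2-\cF(\|\eta\|_{H^s})\|\eta\|_{H^{s+\tdm-\rho_0/2}}^2.
\]
Second, the symbolic calculus (Appendix \ref{appendix:para}) yields, thanks to smoothness of $|\xi|^s$ in $x$, that $[\Ld^s,T_{\ld\ell}]$ has order $s+3-\rho_0$, whence by the $H^{-\tdm}$--$H^{\tdm}$ duality
\[
|([\Ld^s,T_{\ld\ell}]\eta,\Ld^s\eta)_{L^2}|\le \cF(\|\eta\|_{H^s})\|\eta\|_{H^{s+\tdm-\rho_0}}\|\eta\|_{H^{s+\tdm}}.
\]
Third, for the source term, duality combined with \eqref{estg:reduce} and $\|\eta\|_{H^{s-\mez+\delta}}\le \|\eta\|_{H^s}$ (valid since $\delta\le \mez$) gives
\[
|(g,\eta)_{H^s}|\le \|g\|_{H^{s-\tdm+\delta}}\|\eta\|_{H^{s+\tdm-\delta}}\le \cF(\|\eta\|_{H^s})\big(\|\eta\|_{H^{s+\tdm}}+\|\eta\|_{H^s}\big)\|\eta\|_{H^{s+\tdm-\delta}}.
\]
In each of the three remainders, the intermediate norm $\|\eta\|_{H^{s+\tdm-\tau}}$ with $\tau=\rho_0/2$, $\rho_0$, or $\delta$, is treated through the interpolation $\|\eta\|_{H^{s+\tdm-\tau}}\le \|\eta\|_{H^s}^{2\tau/3}\|\eta\|_{H^{s+\tdm}}^{1-2\tau/3}$ followed by Young's inequality, producing the desired bound $\epsilon\|\eta\|_{H^{s+\tdm}}^2+C_\epsilon\cF(\|\eta\|_{H^s})\|\eta\|_{H^s}^2$.

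Choosing $\epsilon$ small enough to be absorbed by the coercive $\frac{c\s}{\mu^-}\|\eta\|_{H^{s+\tdm}}^2$, we arrive at the parabolic differential inequality
\[
\frac{d}{dt}\|\eta\|_{H^s}^2+c\|\eta\|_{H^{s+\tdm}}^2\le \cF(\|\eta\|_{H^s}),
\]
whose integration over $[0,T]$, combined with the definition of $Z^s(T)$, yields \eqref{apriori:Hs}. I expect the main obstacle to be the quantitative form of G\aa{}rding's inequality together with the commutator bound: because $\ld\ell$ has only $C^{\rho_0}$ regularity in $x$ with $\rho_0\approx s-1-\frac d 2$ possibly small, both remainders sit in Sobolev spaces only slightly below $H^{s+\tdm}$, and the gain that allows closure of the estimate rests simultaneously on the positivity of $\rho_0$ and on the smoothing $\delta>0$ in \eqref{estg:reduce}, both routed through the same interpolation--Young scheme.
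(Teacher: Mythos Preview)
Your argument is correct and follows essentially the same route as the paper. The only difference is cosmetic: where you invoke a G\aa rding inequality for paradifferential operators as a black box, the paper spells it out by writing $T_{\ld\ell}\approx T_{\sqrt{\ld\ell}}^{\,*}T_{\sqrt{\ld\ell}}$ and using symbolic calculus (Theorem~\ref{theo:sc}\,(ii)--(iii)) to control the two remainder operators; this is precisely the standard proof of paradifferential G\aa rding, so the two presentations are equivalent.
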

\begin{proof}
Denote $\langle D_x\rangle=(\text{Id}-\Delta_x)^\mez$ and $\eta_s=\langle D_x\rangle^s\eta$. Commuting equation \eqref{eq:reduce} with $\langle D_x\rangle^s$ we obtain
\[
\p_t\eta_s=-\frac{\s}{\mu^-}T_{\ld \ell}\eta_s -\frac{\s}{\mu^-}[\langle D_x\rangle^s, T_{\ld \ell}]\eta+\langle D_x\rangle^sg
\]
which yields 
\bq\label{ee:dt}
\mez\frac{d}{dt}\| \eta_s\|_{L^2}^2=-\frac{\s}{\mu^-}(T_{\ld \ell}\eta_s, \eta_s)_{L^2\times L^2}-\frac{\s}{\mu^-}([\langle D_x\rangle^s, T_{\ld \ell}]\eta, \eta_s)_{L^2\times L^2} +(\langle D_x\rangle^sg, \eta_s)_{L^2\times L^2}.
\eq
In view of \eqref{estg:reduce},
\bq\label{ee:t3}
\begin{aligned}
\la (\langle D_x\rangle^sg, \eta_s)_{L^2\times L^2}\ra&\le \| \langle D_x\rangle^sg\|_{H^{-\tdm+\delta}} \|\eta_s\|_{H^{\tdm-\delta}}\\
&\le   (\frac{\s}{\mu^-}+\frac{\rho^-\g}{\mu^-})\cF(\| \eta\|_{H^s})\| \eta\|_{H^{s+\tdm}}\| \eta\|_{H^{s+\tdm-\delta}}.
\end{aligned}
\eq
In light of Theorem \ref{theo:sc} (ii) and  Lemma \ref{lemm:symbol},  $[\langle D_x\rangle^s, T_{\ld \ell}]$ is of order $s+3-\delta$ and that
\[
\|[\langle D_x\rangle^s, T_{\ld \ell}]\eta\|_{H^{-\tdm+\delta}}\le  \cF(\| \eta\|_{H^s})\| \eta\|_{H^{s+\tdm}},
\]
whence
\bq\label{ee:t2}
\la ([\langle D_x\rangle^s, T_{\ld \ell}]\eta, \eta_s)_{L^2\times L^2} \ra\le\cF(\| \eta\|_{H^s})\| \eta\|_{H^{s+\tdm}}\| \eta\|_{H^{s+\tdm-\delta}}.
\eq
Next we  write 
\bq\label{ee:d}
\begin{aligned}
(T_{\ld \ell}\eta_s, \eta_s)_{L^2\times L^2}&=(T_{\sqrt{\ld \ell}}\eta_s, T_{\sqrt{\ld \ell}}\eta_s)_{L^2\times L^2}+\big(T_{\sqrt{\ld \ell}}\eta_s, \big((T_{\sqrt{\ld \ell}})^*-T_{\sqrt{\ld \ell}}\big)\eta_s\big)_{L^2\times L^2}\\
&\quad+\big(\big(T_{\ld \ell}-T_{\sqrt{\ld \ell}}T_{\sqrt{\ld \ell}}\big)\eta_s, \eta_s\big)_{L^2\times L^2}\\
&=I+II+III.
\end{aligned}
\eq
 Applying Theorem \ref{theo:sc} (ii), (iii) and Lemma \ref{lemm:symbol}, we find that $T_{\ld \ell}-T_{\sqrt{\ld \ell}}T_{\sqrt{\ld \ell}}$ and $(T_{\sqrt{\ld \ell}})^*-T_{\sqrt{\ld \ell}}$ are respectively of order $3-\delta$ and $\tdm-\delta$  and that
\[
\| \big(T_{\ld \ell}-T_{\sqrt{\ld \ell}}T_{\sqrt{\ld \ell}}\big)\eta_s\|_{H^{-\tdm+\delta}}+\|\big((T_{\sqrt{\ld \ell}})^*-T_{\sqrt{\ld \ell}}\big)\eta_s\|_{H^{\delta}}\le  \cF(\| \eta\|_{H^s})\| \eta\|_{H^{s+\tdm}}.
\]
Consequently,
\bq\label{ee:d1}
\la II\ra+\la III\ra \le\cF(\| \eta\|_{H^s})\| \eta\|_{H^{s+\tdm}}\| \eta\|_{H^{s+\tdm-\delta}}.
\eq
As for $I$ we first note that the lower bound \eqref{ldell} implies $M_\delta^{-\tdm}(\sqrt{\ld \ell}^{-1})\le \cF(\| \eta\|_{H^s})$ (see Lemma \ref{lemm:symbol}). Theorem \ref{theo:sc} (i) and (ii) then gives that $T_{\sqrt{\ld\ell}^{-1}}T_{\sqrt{\ld\ell}}-T_1=T_{\sqrt{\ld\ell}^{-1}}T_{\sqrt{\ld\ell}}-\Psi(D)$ is of order $-\delta$ ($\Psi$ given by \eqref{cond.psi}) and that 
\[
\begin{aligned}
\| \Psi(D)\eta_s\|_{H^{\tdm}}&\le\|T_{\sqrt{\ld\ell}^{-1}}T_{\sqrt{\ld\ell}}\eta_s\|_{H^\tdm}+\| (\text{Id}-T_{\sqrt{\ld\ell}^{-1}}T_{\sqrt{\ld\ell}})\eta_s\|_{H^{\tdm}}\\
&\le \cF(\| \eta\|_{H^s})\big(\|T_{\sqrt{\ld\ell}}\eta_s\|_{L^2}+\| \eta_s\|_{H^{\tdm-\delta}}\big).
\end{aligned}
\]
It follows that
\[
\| \eta_s\|_{H^{\tdm}}\le \cF(\| \eta\|_{H^s})\big(\|T_{\sqrt{\ld\ell}}\eta_s\|_{L^2}+\| \eta_s\|_{H^{\tdm-\delta}}\big)
\]
and hence,
\bq\label{ee:d2}
I=\|T_{\sqrt{\ld\ell}}\eta_s\|_{L^2}^2\ge \frac{1}{\cF(\| \eta\|_{H^s})}\|\eta \|^2_{H^{s+\tdm}}-\cF(\| \eta\|_{H^s})\| \eta\|_{H^{s+\tdm}}\| \eta\|_{H^{s+\tdm-\delta}}.
\eq
Combining \eqref{ee:d}, \eqref{ee:d1} and \eqref{ee:d2} leads to 
\bq\label{ee:d10}
-(T_{\ld \ell}\eta_s, \eta_s)_{L^2\times L^2}\le -\frac{1}{\cF(\| \eta\|_{H^s})}\|\eta_s \|^2_{H^\tdm}+\cF(\| \eta\|_{H^s})\| \eta\|_{H^{s+\tdm}}\| \eta\|_{H^{s+\tdm-\delta}}
\eq
for some $\cF$ depending only on $(h, s)$. From this, \eqref{ee:dt}, \eqref{ee:t3} and  \eqref{ee:t2} we arrive at
\[
\mez\frac{d}{dt}\| \eta\|_{H^s}^2\le -\frac{\s}{\mu^-}\frac{1}{\cF(\| \eta\|_{H^s})}\|\eta \|^2_{H^{s+\tdm}}+(\frac{\s}{\mu^-}+\frac{\rho^-\g}{\mu^-})\cF(\| \eta\|_{H^s})\| \eta\|_{H^{s+\tdm}}\| \eta\|_{H^{s+\tdm-\delta}}
\]
where $\cF$ depends only on $(h, s)$. The gain of $\delta$ derivative in the second term allows one to interpolate 
\[
\| \eta\|_{H^{s+\tdm}}\| \eta\|_{H^{s+\tdm-\delta}}\les \| \eta\|_{H^{s}}^{1-\tt}\| \eta\|_{H^{s+\tdm}}^{1+\tt},\quad \tt\in (0, 1),
\]
where $\cF:\Rr^+\to\Rr^+$ depends only on $(h, s,  \frac{\rho^-\g}{\mu^-}, \frac{\s}{\mu^-})$. We then  use Young's inequality to hide $\| \eta\|_{H^{s+\tdm}}^{1+\tt}$, leading to 
\[
\mez\frac{d}{dt}\| \eta\|_{H^s}^2\le -\frac{1}{\cF(\| \eta\|_{H^s})}\|\eta \|^2_{H^{s+\tdm}}+\cF(\| \eta\|_{H^s})\| \eta\|_{H^s}^2.
\]
Finally,  a Gr\"onwall argument finishes the proof.
\end{proof}
As the function $\cF$ in \eqref{apriori:Hs} depends on the distance between the surface and the bottom, we need an  a priori estimate for this quantity.
\begin{lemm}\label{lemm:apriorih}
Under the assumptions of Proposition \ref{prop:aprioriHs}, there exist $\tt\in (0, 1)$ and $\cF:\Rr^+\to\Rr^+$ depending only on $(h, s, \frac{\s}{\mu^-}, \frac{\rho^-\g}{\mu^-})$ such that 
\bq\label{apriori:h}
\inf_{t\in [0, T]}\dist(\eta(t), \Gamma^-)\ge \dist(\eta(0), \Gamma^-)- T^\tt\cF(\| \eta\|_{Z^s(T)}).
\eq
\end{lemm}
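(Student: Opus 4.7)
The plan is to reduce the lemma to an $L^\infty_x$ control of $\eta(t)-\eta(0)$ and then to invoke the paradifferential reduction of Proposition \ref{prop:reduce} together with interpolation.

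First I would record the geometric inequality
\[
\dist(\eta(t),\Gamma^-) \ge \dist(\eta(0),\Gamma^-) - \|\eta(t)-\eta(0)\|_{L^\infty(\Rr^d)},
\]
which follows, for each $(x,x')\in\Rr^d\times \Rr^d$, from
$|(x,\eta(0,x))-(x',\underline{b}^-(x'))|\le |(x,\eta(t,x))-(x',\underline{b}^-(x'))| + |\eta(t,x)-\eta(0,x)|$
by taking infima over $(x,x')$. This reduces the proof to showing
\[
\sup_{t\in [0,T]}\|\eta(t)-\eta(0)\|_{L^\infty} \le T^\theta \cF(\|\eta\|_{Z^s(T)})
\]
for some $\theta\in (0,1)$.

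Next I would fix $\delta\in (0,s-1-\tfrac{d}{2})$ with $\delta\le \tfrac{1}{2}$ and apply Proposition \ref{prop:reduce}. Since $T_{\ld\ell}$ is a paradifferential operator of order $3$ with symbol seminorm controlled by $\cF(\|\eta\|_{H^s})$ (by Lemma \ref{lemm:symbol}), and since $H^s\hookrightarrow H^{s-1/2+\delta}$ (because $\delta\le \tfrac{1}{2}$), the identity $\partial_t\eta = -\tfrac{\s}{\mu^-}T_{\ld\ell}\eta + g$ combined with \eqref{estg:reduce} gives
\[
\|\partial_t\eta(t)\|_{H^{s-3/2}} \le \cF(\|\eta(t)\|_{H^s})\bigl(\|\eta(t)\|_{H^{s+3/2}}+\|\eta(t)\|_{H^s}\bigr)
\]
with $\cF$ depending only on $(h,s,\tfrac{\s}{\mu^-},\tfrac{\rho^-\g}{\mu^-})$. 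Squaring, integrating over $[0,T]$, and invoking the definition of $Z^s(T)$ (absorbing the harmless $T^{1/2}$ factor on the low-order term under, say, $T\le 1$) yields
\[
\|\partial_t\eta\|_{L^2([0,T];H^{s-3/2})}\le \cF(\|\eta\|_{Z^s(T)}).
\]

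Then, by the fundamental theorem of calculus in $H^{s-3/2}$ and Cauchy-Schwarz in time,
\[
\|\eta(t)-\eta(0)\|_{H^{s-3/2}} \le t^{1/2}\|\partial_t\eta\|_{L^2_tH^{s-3/2}}\le t^{1/2}\cF(\|\eta\|_{Z^s(T)}),
\]
while $\|\eta(t)-\eta(0)\|_{H^s}\le 2\|\eta\|_{Z^s(T)}$ is trivial. Since $s>1+\tfrac{d}{2}$, I can choose $\sigma\in(\tfrac{d}{2},s)$ with $\sigma\ge s-\tfrac{3}{2}$; interpolating the two displays above between $H^{s-3/2}$ and $H^s$, then applying the Sobolev embedding $H^\sigma\hookrightarrow L^\infty$, produces
\[
\|\eta(t)-\eta(0)\|_{L^\infty} \le t^\theta \cF(\|\eta\|_{Z^s(T)}),\qquad \theta=\tfrac{s-\sigma}{3}\in (0,\tfrac{1}{2}],
\]
which combined with the geometric inequality of the second paragraph closes the proof. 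The main subtlety is the low-regularity regime $1+\tfrac{d}{2}<s\le \tfrac{3}{2}+\tfrac{d}{2}$, in which $H^{s-3/2}\not\subset L^\infty$: the interpolation step is then unavoidable, and the resulting exponent $\theta$ degenerates as $s\downarrow 1+\tfrac{d}{2}$, though it remains strictly positive for every subcritical $s$, which is all the lemma requires.
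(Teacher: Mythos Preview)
Your proof is correct, and the overall strategy---bound $\partial_t\eta$ in a Sobolev norm, integrate via Cauchy--Schwarz in time, interpolate, and apply the Sobolev embedding into $L^\infty$---coincides with the paper's. The difference lies in how you control $\partial_t\eta$: the paper works directly with equation \eqref{eq:eta} and the continuity estimate of Theorem~\ref{theo:estDN} for $G^-(\eta)$ to obtain $\|\partial_t\eta\|_{L^2_x}\le \cF(\|\eta\|_{H^s})\|\eta\|_{H^3}$, and then interpolates between $L^2$ and $H^s$; you instead invoke the paradifferential reduction of Proposition~\ref{prop:reduce} to bound $\|\partial_t\eta\|_{H^{s-3/2}}$, and interpolate between $H^{s-3/2}$ and $H^s$. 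Both routes work equally well here; the paper's is marginally more elementary (it avoids the paradifferential machinery for this step), whereas yours has the virtue of reusing the reduction already established. One minor point: your appeal to ``$T\le 1$'' to absorb the low-order term is unnecessary, since the trivial inequality $\|\eta\|_{H^{s-1/2+\delta}}\le\|\eta\|_{H^s}\le\|\eta\|_{H^{s+3/2}}$ already gives $\|\partial_t\eta\|_{L^2_tH^{s-3/2}}\le\cF(\|\eta\|_{Z^s(T)})$ for all $T$ without that restriction.
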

\begin{proof}
Using equation \eqref{eq:eta}, Theorem \ref{theo:estDN} and the fact that $s+\tdm>3$, we have
\[
\begin{aligned}
\| \eta(t)-\eta(0)\|_{L^2}&\le \int_0^t \|G^-(\eta)\big(\frac{\s}{\mu^-} H(\eta)+\frac{\rho^-\g}{\mu^-}\eta\big)(r)\|_{L^2}dr\\
&\le \int_0^t \cF(\| \eta(r)\|_{H^s})\|\eta(r)\|_{H^3}dr\\
&\le t^{\frac{1}{2}}\cF(\| \eta\|_{L^\infty([0, t]; H^s)})\|\eta\|_{L^2([0, t]; H^{s+\tdm})}.
\end{aligned}
\]
Fixing $s'\in (1+\frac d2, s)$ and using interpolation yields
\[
\begin{aligned}
\| \eta(t)-\eta(0)\|_{H^{s'}}&\le \| \eta(t)-\eta(0)\|_{L^2}^{\tt}\| \eta(t)-\eta(0)\|_{H^{ s}}^{1-\tt}\le t^{\frac{\tt}{2}}\cF(\| \eta\|_{Z^s(t)})
\end{aligned}
\]
for some $\tt\in (0, 1)$. Then in view of the embedding $H^{s'}\subset L^\infty$, this implies \eqref{apriori:h}.
\end{proof}
\subsection{Contraction estimates}
Our goal in this subsection is to prove the following contraction estimate for  solutions of \eqref{eq:eta}.
\begin{theo}\label{theo:contra}
Let $s>1+\frac d2$. Assume that $\eta_1$ and $\eta_2$ are two solutions of \eqref{eq:eta} in $Z^s(T)$ that satisfy \eqref{cd:h}. There exists $\cF:\Rr^+\to \Rr^+$ depending only on $(h, s, \frac{\rho^-\g}{\mu^-}, \frac{\s}{\mu^-})$ such that 
\bq\label{est:contreta}
\| \eta_1-\eta_2\|_{Z^s(T)}\le \cF\big(\|(\eta_1, \eta_2)\|_{Z^s(T)}\big)\| (\eta_1-\eta_2)\vert_{t=0}\|_{H^s}.
\eq
\end{theo}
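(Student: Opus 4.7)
Setting $\eta = \eta_1 - \eta_2$ and subtracting \eqref{eq:eta} at $\eta_2$ from \eqref{eq:eta} at $\eta_1$, the plan is to view $\eta$ as the solution of a linear parabolic paradifferential equation with principal part $\frac{\s}{\mu^-}T_{\ld_1\ell_1}$ (coefficients frozen at $\eta_1$), and to replay the energy argument of Proposition \ref{prop:aprioriHs}. Applying the identity
\[
A(\eta_1)B(\eta_1) - A(\eta_2)B(\eta_2) = [A(\eta_1)-A(\eta_2)]B(\eta_2) + A(\eta_1)[B(\eta_1)-B(\eta_2)]
\]
with $(A,B)=(G^-,H)$ and then $(G^-,\text{Id})$, I paralinearize each piece exactly as in the proof of Proposition \ref{prop:reduce} and arrive at the difference equation
\[
\p_t \eta + \frac{\s}{\mu^-}T_{\ld_1\ell_1}\eta = \cR,
\]
where $\cR$ gathers three kinds of contributions: (a) a symbol-difference term $T_{\ld_2\ell_2-\ld_1\ell_1}\eta_2$ produced by freezing the principal operator at $\eta_1$; (b) the differences of the smoothing remainders from Theorem \ref{paralin:ABZ} and from the paralinearization \eqref{matrixM} of $H$, computed at $\eta_1$ versus $\eta_2$; and (c) the gravity pieces $G^-(\eta_1)\eta$ and $[G^-(\eta_1)-G^-(\eta_2)]\eta_2$.

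The next step is to prove the remainder estimate
\[
\|\cR\|_{H^{s-\tdm+\delta}} \le \cF\big(\|(\eta_1,\eta_2)\|_{H^s}\big)\Big(\big(1+\|\eta_1\|_{H^{s+\tdm}}+\|\eta_2\|_{H^{s+\tdm}}\big)\|\eta\|_{H^s} + \|\eta\|_{H^{s-\mez+\delta}}\Big).
\]
Block (c) follows directly from Theorem \ref{theo:estDN} (for $G^-(\eta_1)\eta$) and Theorem \ref{contract:DN} with $f=\eta_2 \in \wt H^{s-\mez}_-$. The first half of (b) is handled by Theorem \ref{contract:DN} applied to $f=H(\eta_2)\in \wt H^{s-\mez}_-$; the Lipschitz dependence of the paralinearization remainders on the surface is obtained by writing $\eta_\tau = \tau \eta_1+(1-\tau)\eta_2$ and differentiating in $\tau$ inside the proofs of Theorems \ref{theo:estDN} and \ref{paralin:ABZ}, which produce estimates smooth in $\eta$. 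Block (a) is the heart of the argument: I write
\[
\ld_2\ell_2 - \ld_1\ell_1 = \ma(\eta_1,\eta_2;\xi)\cdot \na\eta, \qquad \ma(\eta_1,\eta_2;\xi) = \int_0^1 \p_\eta(\ld\ell)(\eta_\tau;\xi)\,d\tau,
\]
so that $T_{\ld_2\ell_2-\ld_1\ell_1}$ is a third-order paradifferential operator with coefficient controlled by a paraproduct against $\na\eta$. Theorem \ref{theo:sc} together with Bony's paraproduct estimates then upgrades this into a bound of the form $\cF\|\eta\|_{H^s}\|\eta_2\|_{H^{s+\tdm}}$, which contributes to the first term on the right-hand side above.

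With the remainder bound in hand, I commute $\langle D_x\rangle^s$ through the difference equation, pair with $\eta_s := \langle D_x\rangle^s \eta$ in $L^2$, and reproduce the coercivity chain \eqref{ee:d}--\eqref{ee:d10} for the symbol $\ld_1\ell_1$ (which satisfies the ellipticity \eqref{ldell} uniformly in $t$). This yields
\[
\mez\frac{d}{dt}\|\eta\|_{H^s}^2 + \frac{1}{\cF}\|\eta\|_{H^{s+\tdm}}^2 \le \cF\|\cR\|_{H^{s-\tdm+\delta}}\|\eta\|_{H^{s+\tdm-\delta}} + \cF\|\eta\|_{H^{s+\tdm}}\|\eta\|_{H^{s+\tdm-\delta}}.
\]
Interpolating $\|\eta\|_{H^{s+\tdm-\delta}}\les \|\eta\|_{H^s}^{2\delta/3}\|\eta\|_{H^{s+\tdm}}^{1-2\delta/3}$ and applying Young's inequality absorb the highest-order factors into the coercive term, leaving a differential inequality of the form
\[
\frac{d}{dt}\|\eta\|_{H^s}^2 + \frac{1}{2\cF}\|\eta\|_{H^{s+\tdm}}^2 \le A(t)\|\eta\|_{H^s}^2, \qquad A(t) = \cF\big(\|(\eta_1,\eta_2)\|_{Z^s(T)}\big)\big(1+\|\eta_1(t)\|_{H^{s+\tdm}}^2+\|\eta_2(t)\|_{H^{s+\tdm}}^2\big).
\]
Since $A \in L^1([0,T])$ by the definition of $Z^s(T)$, Grönwall's lemma and integration in time deliver \eqref{est:contreta}.

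\emph{The main obstacle} is block (a): the third-order operator $T_{\ld_2\ell_2-\ld_1\ell_1}$, whose coefficients are controlled only through $\na\eta$, is applied to $\eta_2$ which lies merely in $L^2_t H^{s+\tdm}_x$. Securing the smoothing gain of $\delta$ derivatives required for Young's inequality to succeed—and in particular avoiding a hopeless loss of three derivatives on $\eta_2$—relies on reorganizing the symbol difference as a paraproduct in $\na\eta$ and exploiting the subcritical regularity $\na\eta \in C^{\delta}_*$ (available because $s>1+\frac{d}{2}$) through the third-order symbolic calculus of Theorem \ref{theo:sc}.
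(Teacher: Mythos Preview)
Your overall plan---derive a linear parabolic equation for $\eta=\eta_1-\eta_2$ with principal part $\frac{\s}{\mu^-}T_{\ld_1\ell_1}$ and a controllable remainder, then run the coercivity argument of Proposition~\ref{prop:aprioriHs} and close by Gr\"onwall---is correct and is what the paper does. But the remainder bound you state, both its Sobolev index and its right-hand side, is wrong and the argument does not close as written.

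You target $\|\cR\|_{H^{s-\tdm+\delta}}$, but the tools you invoke do not deliver this. Theorem~\ref{contract:DN} only controls $[G^-(\eta_1)-G^-(\eta_2)]f$ in $H^{s-\tdm}$, so block~(c) and the ``first half of~(b)'' cannot reach $H^{s-\tdm+\delta}$. For block~(a), the symbol $\ld_2\ell_2-\ld_1\ell_1$ is genuinely of order~$3$; rewriting it as $\ma\cdot\na\eta$ and invoking $\na\eta\in C^\delta_*$ does \emph{not} lower the order---Theorem~\ref{theo:sc}(i) still gives $\|T_{\ld_2\ell_2-\ld_1\ell_1}\eta_2\|_{H^\mu}\les M^3_0(\ld_2\ell_2-\ld_1\ell_1)\|\eta_2\|_{H^{\mu+3}}$, and $\eta_2$ is only available in $L^2_tH^{s+\tdm}_x$. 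So your ``main obstacle'' is a phantom: block~(a) is trivially handled in $H^{s-\tdm}$ via $M^3_0(\ld_2\ell_2-\ld_1\ell_1)\le\cF(N_s)\|\eta\|_{H^s}$, and no $\delta$-gain is needed or possible there. Separately, several pieces of $\cR$---for instance the commutator $(T_{\ld_1}T_{\ell_1}-T_{\ld_1\ell_1})\eta$ of order $3-\delta$ and the piece $R^-(\eta_1)(H(\eta_1)-H(\eta_2))$---force a term $\|\eta\|_{H^{s+\tdm-\delta}}$ on the right-hand side, which is absent from your bound.

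The fix is to target $H^{s-\tdm}$ and prove
\[
\|\cR\|_{H^{s-\tdm}}\le \cF(N_s)\big(\|\eta\|_{H^{s+\tdm-\delta}}+N_{s+\tdm}\|\eta\|_{H^s}\big),
\]
then pair with $\|\eta\|_{H^{s+\tdm}}$ in the energy estimate; interpolating $\|\eta\|_{H^{s+\tdm-\delta}}$ between $H^s$ and $H^{s+\tdm}$ and applying Young's inequality closes the loop exactly as in your final paragraph. This is precisely \eqref{est:wtcR1} in the paper. The paper also organizes the decomposition more economically: it subtracts the \emph{original} equations and only then paralinearizes the single block $G^-(\eta_1)(H(\eta_1)-H(\eta_2))$, so that the symbol-difference appears as $T_{\ld_1}T_{\ell_1-\ell_2}\eta_2$ and the only nontrivial Lipschitz estimate needed is for $R_H(\eta)=H(\eta)-T_\ell\eta$ (Lemma~\ref{lemm:contraH}), rather than for the full remainder $g$ of Proposition~\ref{prop:reduce} as your block~(b) would require.
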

We first prove a contraction estimate for the remainder in the paralinearization  $H(\eta)\sim T_\ell \eta$.
\begin{lemm}\label{lemm:contraH}
Set 
\bq\label{def:RH}
R_H(\eta)=H(\eta)-T_\ell \eta
\eq
 where $\ell$ is defined in terms of $\eta$ as in \eqref{def:ell}. For $\delta\in(0, s-1-\frac d2)$ and $\delta\le 1$, there exists $\cF$ depending only on $s$ such that
\[
\| R_H(\eta_1)-R_H(\eta_2)\|_{H^{s-\mez}}\le \cF(\| (\eta_1, \eta_2)\|_{H^s})\big(\| \eta_1-\eta_2\|_{H^{s+\tdm-\delta}}+\| (\eta_1, \eta_2)\|_{H^{s+\tdm}}\| \eta_1-\eta_2\|_{H^s}\big).
\]
\end{lemm}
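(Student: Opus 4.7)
The starting point is the explicit decomposition of $R_H$ derived in the proof of Proposition~\ref{prop:reduce}. Writing $F(v) = v/\sqrt{1+|v|^2}$ and $M(\eta) = F'(\nabla\eta)$ (so that $M(\eta)\xi\cdot\xi = \ell$), and introducing the Bony paralinearization remainder $R_B(\eta) := F(\nabla\eta) - T_{M(\eta)}\nabla\eta$, that proof yields
\[
R_H(\eta) = T_{-i(\cnx M(\eta))\cdot\xi}\,\eta - \cnx R_B(\eta).
\]
The plan is therefore to decompose
\[
R_H(\eta_1) - R_H(\eta_2) = T_{-i(\cnx M_1)\cdot\xi}(\eta_1-\eta_2) + T_{-i\cnx(M_1-M_2)\cdot\xi}\eta_2 - \cnx\bigl[R_B(\eta_1) - R_B(\eta_2)\bigr]
\]
and control each of the three pieces in $H^{s-\mez}$ separately.

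The first two pieces I would handle with the paraproduct estimate \eqref{boundpara} used in Proposition~\ref{prop:reduce}. For the summand involving $\eta_1-\eta_2$ one invokes the uniform bound $\|\cnx M_1\|_{H^{s-2}}\le \cF(\|\eta_1\|_{H^s})$; since only $H^{s-\mez}$ output is asked for (rather than the stronger $H^{s-\mez+\delta}$ used in Proposition~\ref{prop:reduce}), we may accept the input in $H^{s+\tdm-\delta}$, yielding the first contribution in the claimed bound. For the summand involving $M_1-M_2$, I would first establish a Moser-type difference estimate
\[
\|M_1 - M_2\|_{H^{s-1}} \le \cF(\|(\eta_1,\eta_2)\|_{H^s})\|\eta_1-\eta_2\|_{H^s}
\]
by writing $M_1-M_2 = \int_0^1 F''(\nabla\eta_\tau)\cdot\nabla(\eta_1-\eta_2)\,d\tau$ with $\eta_\tau := \tau\eta_1+(1-\tau)\eta_2$ and using standard Sobolev product bounds; pairing with $\|\eta_2\|_{H^{s+\tdm}}$ then produces the second contribution.

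The main obstacle is the contraction estimate for the Bony remainder $R_B$ itself in $H^{s+\mez}$. Expanding
\[
R_B(\eta_1) - R_B(\eta_2) = \bigl[F(\nabla\eta_1)-F(\nabla\eta_2)\bigr] - T_{M_1}\nabla(\eta_1-\eta_2) - T_{M_1-M_2}\nabla\eta_2
\]
and using $F(\nabla\eta_1)-F(\nabla\eta_2) = \int_0^1 F'(\nabla\eta_\tau)\,d\tau \cdot \nabla(\eta_1-\eta_2)$, I would insert and subtract $\int_0^1 T_{F'(\nabla\eta_\tau)}\nabla(\eta_1-\eta_2)\,d\tau$ so as to recast the right-hand side as a $\tau$-averaged genuine Bony remainder for the smooth map $v\mapsto F'(v)$ acting on $\nabla(\eta_1-\eta_2)$, plus two paraproduct error terms built from $F'(\nabla\eta_\tau)-M_1$ and $M_1-M_2$. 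The averaged Bony remainder is controlled directly by Theorem~\ref{paralin:nonl} applied pointwise in $\tau$, whereas the two paraproduct errors are dispatched by the Moser-type contraction established above. Applying $\cnx$ then costs one derivative and delivers the desired $H^{s-\mez}$ bound. The delicate structural point — and the reason the estimate is stated with two distinct contributions — is that the rough norm $\|\eta_1-\eta_2\|_{H^{s+\tdm-\delta}}$ must appear only with the constant-in-$\eta$ coefficient $\cF(\|(\eta_1,\eta_2)\|_{H^s})$, while every other occurrence of a strong norm is paired with the weak difference $\|\eta_1-\eta_2\|_{H^s}$; this separation is precisely what the stability argument in $Z^s(T)$ will require.
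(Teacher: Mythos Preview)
Your approach is correct and leads to the claimed estimate, but it differs from the paper's. The paper proceeds via the G\^ateaux derivative and the mean-value theorem: it suffices to prove
\[
\| d_\eta R_H(\eta)\dot\eta\|_{H^{s-\mez}}\le \cF(\| \eta\|_{H^s})\big(\| \dot\eta\|_{H^{s+\tdm-\delta}}+\| \eta\|_{H^{s+\tdm}}\| \dot\eta\|_{H^s}\big),
\]
and then one computes directly $d_\eta R_H(\eta)\dot\eta = -T_{\cnx M}\cdot\nabla\dot\eta - g_0 - T_{d_\eta\ell\,\dot\eta}\eta$, where $g_0$ collects the two product remainders $(\cnx M)\nabla\dot\eta - T_{\cnx M}\nabla\dot\eta$ and $M\nabla\cdot\nabla\dot\eta - T_M\nabla\cdot\nabla\dot\eta$. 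Each of these three pieces is dispatched by \eqref{boundpara} or \eqref{paralin:product}. This linearization at a single point avoids the $\tau$-averaging and keeps the bilinear structure in $(\eta,\dot\eta)$ explicit from the outset, so the splitting into the two types of contribution (strong norm on $\dot\eta$ with tame coefficient, versus weak norm on $\dot\eta$ paired with one factor of $\|\eta\|_{H^{s+\tdm}}$) is immediate.

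Your direct-difference route is also valid, but one point is mislabeled: the ``$\tau$-averaged Bony remainder'' $[F'(\nabla\eta_\tau) - T_{F'(\nabla\eta_\tau)}]\nabla(\eta_1-\eta_2)$ is a paraproduct remainder $au - T_a u$ with \emph{distinct} $a$ and $u$, not a composition remainder in the sense of Theorem~\ref{paralin:nonl}. The tool you actually need is \eqref{paralin:product}, applied with the coefficient $a = F'(\nabla\eta_\tau) - \text{Id}$ placed at the high regularity $H^{s+\mez}$ (available since $\eta_\tau\in H^{s+\tdm}$) and $u=\nabla(\eta_1-\eta_2)$ at $H^{s-1}$; this yields a bound of the second type, $\cF(N_s)N_{s+\tdm}\|\eta_1-\eta_2\|_{H^s}$. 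With that correction your argument goes through.
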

\begin{proof}
We denote the G\^ateaux derivative $d_u F(u)$ of a function $F$ at $u$ in the direction $\dot u$  by 
\[
d_u F(u)\dot u=\lim_{\eps\to 0}\frac{1}{\eps}(F(u+\eps \dot u)-F(u)).
\]
By virtue of the mean-value theorem for G\^ateaux derivative, it suffices to prove that
\bq\label{dRH}
\| d_\eta R_H(\eta)\dot\eta\|_{H^{s-\mez}}\le \cF(\| \eta\|_{H^s})\big(\| \dot \eta\|_{H^{s+\tdm-\delta}}+\| \eta\|_{H^{s+\tdm}}\| \dot \eta\|_{H^s}\big).
\eq
Setting $f(z)=\frac{z}{\sqrt{1+|z|^2}}$ for $z\in \Rr^d$, we write $H(\eta)=-\cnx f(\na \eta)$. Since $d_\eta f(\na \eta)\dot \eta=M\na \dot \eta$, where $M=M(\na \eta)$ is given by \eqref{matrixM}, it follows that
\[
d_\eta R_H(\eta)\dot\eta= -\cnx(M)\na\dot\eta-M\na\cdot \na\dot\eta-T_\ell \dot \eta-T_{d_\eta \ell \dot \eta}\eta.
\]
 Using Bony's decomposition and the fact that $M\xi\cdot \xi=\ell$, we obtain
\begin{align*}
d_\eta R_H(\eta)\dot\eta&= -T_{\cnx(M)}\cdot \na\dot\eta+T_{M\xi\cdot \xi}\dot\eta-g_0-T_\ell \dot \eta-T_{d_\eta \ell(\eta) \dot \eta}\eta\\
&= -T_{\cnx(M)}\cdot \na\dot\eta-g_0-T_{d_\eta \ell(\eta) \dot \eta}\eta
\end{align*}
where $g_0=g_1+g_2$,
\[
g_1=\cnx(M)\na\dot\eta-T_{\cnx(M)}\na\dot\eta,\quad g_2=M\na\cdot \na\dot\eta-T_M\na\cdot \na\dot\eta.
\]
Since
\[
\| M-\text{Id}\|_{H^{s+\mez}}\le \cF(\| \eta\|_{H^s})\| \na\eta\|_{H^{s+\mez}}\le \cF(\| \eta\|_{H^s})\| \eta\|_{H^{s+\tdm}},
\]
  \eqref{paralin:product} implies
\begin{align*}
&\| g_1\|_{H^{s-\mez}}\les\|  \cnx M\|_{H^{s-\mez}}\| \na \dot\eta\|_{H^{s-1}}  \les \cF(\| \eta\|_{H^s})\| \eta\|_{H^{s+\tdm}}\| \dot \eta\|_{H^s},\\
&\| g_2\|_{H^{s-\mez}}\les (\|  M-\text{Id}\|_{H^{s+\mez}}+1)\| \na^2 \dot\eta\|_{H^{s-2}}\les  \cF(\| \eta\|_{H^s})(1+\| \eta\|_{H^{s+\tdm}})\| \dot \eta\|_{H^s}.
\end{align*}
By means of  \eqref{estM:Hs-1} and  \eqref{boundpara} we get
\begin{align*}
&\|  T_{\cnx M}\cdot \na\dot\eta\|_{H^{s-\mez}}\les \| \cnx M\|_{H^{s-2}}\| \na\dot\eta\|_{H^{s+\mez-\delta}}\les \cF(\| \eta\|_{H^s})\| \dot \eta\|_{H^{s+\tdm-\delta}}
\end{align*}
for $\delta\in(0, s-1-\frac d2)$ and $\delta\le 1$.

 Finally,  for $T_{d_\eta \ell(\eta)\dot \eta}\eta$ we note that $d_\eta\ell(\eta)\dot \eta=F(\na \eta, \xi)\na \dot \eta$ where $F$ is  homogeneous of order $2$ in $\xi$. Hence,
\[
M^2_0(d_\eta \ell(\eta)\dot \eta)\le \cF(\| \eta\|_{H^s})\| \dot \eta\|_{H^s}
\]
and thus applying Theorem \ref{theo:sc} (i) gives
\[
\| T_{d_\eta \ell(\eta) \dot \eta}\eta\|_{H^{s-\mez}}\le \cF(\| \eta\|_{H^s})\| \dot \eta\|_{H^s}\| \eta\|_{H^{s+\tdm}}.
\]
Putting together the above estimates we arrive at \eqref{dRH} which completes the proof.
\end{proof}
{\it Proof of Theorem \ref{theo:contra}}

Setting $\eta_\delta=\eta_1-\eta_2$ we have
\begin{align}\label{eq:etad}
&\p_t\eta_\delta=-\frac{\s}{\mu^-}G^-(\eta_1)(H(\eta_1)-H(\eta_2))-\cR_0,\\
&\cR_0:=\frac{\rho^-\g}{\mu^-}G^-(\eta_1)\eta_\delta+[G^-(\eta_1)-G^-(\eta_2)]\big(\frac{\s}{\mu^-}H(\eta_2)+\frac{\rho^-\g}{\mu^-} \eta_2\big).
\end{align}
According to Theorem \ref{theo:estDN},
\[
\| G^-(\eta_1)\eta_\delta\|_{H^{s-\tdm}}\le \cF(\|\eta_1\|_{H^s})\Vert \eta_\delta\Vert_{H^{s-\mez}}.
\]
On the other hand, Theorem \ref{contract:DN} applied with $f=\frac{\s}{\mu^-}H(\eta_2)+\frac{\rho^-\g}{\mu^-} \eta_2\in H^{s-\mez}$ gives
\[
 \| [G^-(\eta_1)-G^-(\eta_2)]\big(\frac{\s}{\mu^-}H(\eta_2)+\frac{\rho^-\g}{\mu^-} \eta_2\big)\|_{H^{s-\tdm}}\le (\frac{\s}{\mu^-}+\frac{\rho^-\g}{\mu^-})\cF(N_s)\Vert \eta_\delta\Vert_{H^s}\| \eta_2\|_{H^{s+\tdm}},
\]
where $\cF$ depends only on  $(h, s)$ and we denoted
\bq\label{Nr}
N_r=\|(\eta_1, \eta_2)\|_{H^r}.
\eq
Consequently,
\bq\label{est:cR0}
\| \cR_0\|_{H^{s-\tdm}}\le (\frac{\s}{\mu^-}+\frac{\rho^-\g}{\mu^-})\cF(N_s)\| \eta_\delta\|_{H^s}\big(1+N_{s+\tdm}\big).
\eq
Next we claim that for some $\cF$ depending only on  $(h, s)$,
\begin{align}\label{contra:lin}
&G^-(\eta_1)(H(\eta_1)-H(\eta_2))=T_{\ld_1\ell_1}\eta_\delta+\cR_1,\\ \label{est:cR1}
&\| \cR_1\|_{H^{s-\tdm}}\le \cF(N_s)\big(\| \eta_\delta\|_{H^{s+\tdm-\delta}}+N_{s+\tdm}\| \eta_\delta\|_{H^s}\big).
\end{align}
To this end, let us fix $\delta\in(0, s-1-\frac d2)$ and $\delta\le \mez$. Applying Theorem \ref{paralin:ABZ} with $\sigma=s-\mez-\delta$ we obtain
\begin{align*}
&G^-(\eta_1)(H(\eta_1)-H(\eta_2))=T_{\ld_1}(H(\eta_1)-H(\eta_2))+\cR_2,\\
&\| \cR_2\|_{H^{s-\tdm}}\le \cF(\| \eta_1\|_{H^s})\Vert H(\eta_1)-H(\eta_2)\Vert_{H^{s-\mez-\delta}}.
\end{align*}
In addition, Theorem \ref{est:diffF(U)} together with the embedding $\na\eta_j\in H^{s+\mez-\delta}\subset L^\infty$ implies 
\[
\Vert H(\eta_1)-H(\eta_2)\Vert_{H^{s-\mez-\delta}}\le \Big\| \frac{\na\eta_1}{\sqrt{1+|\na\eta_1|^2}}-\frac{\na\eta_2}{\sqrt{1+|\na\eta_2|^2}}\Big\|_{H^{s+\mez-\delta}}\le \cF(N_s)\Vert \eta_\delta\Vert_{H^{s+\tdm-\delta}},
\]
whence 
\[
\| \cR_2\|_{H^{s-\tdm}}\le\cF(N_s)\Vert \eta_\delta\Vert_{H^{s+\tdm-\delta}}.
\]
Next we write 
\[
\begin{aligned}
T_{\ld_1}(H(\eta_1)-H(\eta_2))&=T_{\ld_1}T_{\ell_1}\eta_\delta+T_{\ld_1}T_{\ell_1-\ell_2}\eta_2+T_{\ld_1}(R_H(\eta_1)-R_H(\eta_2))\\
&=T_{\ld_1\ell_1}\eta_\delta+(T_{\ld_1}T_{\ell_1}-T_{\ld_1\ell_1})\eta_\delta+T_{\ld_1}T_{\ell_1-\ell_2}\eta_2+T_{\ld_1}(R_H(\eta_1)-R_H(\eta_2)).
\end{aligned}
\]
By  Theorem \ref{theo:sc} (i) and Lemma \ref{lemm:contraH}, 
\begin{align*}
&\| T_{\ld_1}(R_H(\eta_1)-R_H(\eta_2))\|_{H^{s-\tdm}}\le \cF(N_s)\big(\| \eta_\delta\|_{H^{s+\tdm-\delta}}+N_{s+\tdm}\| \eta_\delta\|_{H^s}\big).
\end{align*}
Since
\[
M^2_0(\ell_1-\ell_2)\le \cF(N_s)\| \eta_\delta\|_{H^s}
\]
(see Lemma \ref{lemm:symbol}), Theorem \ref{theo:sc} (i) gives 
\[
\| T_{\ld_1}T_{\ell_1-\ell_2}\eta_2\|_{H^{s-\tdm}}\le \cF(N_s)\| \eta_\delta\|_{H^s}\| \eta_2\|_{H^{s+\tdm}}.
\]
Finally, Theorem \ref{theo:sc} (ii) yields that $T_{\ld_1}T_{\ell_1}-T_{\ld_1\ell_1}$ is of order $3-\delta$ and 
\bq\label{Tld1l1}
\| (T_{\ld_1}T_{\ell_1}-T_{\ld_1\ell_1})\eta_\delta\|_{H^{s-\tdm}}\le  \cF(\| \eta_1\|_{H^s})\| \eta_\delta\|_{H^{s+\tdm-\delta}}.
\eq
The above estimates together imply
\bq\label{TldHd}
\begin{aligned}
&T_{\ld_1}(H(\eta_1)-H(\eta_2))=T_{\ld_1\ell_1}\eta_\delta+\cR_3,\\
&\|\cR_3\|_{H^{s-\tdm}}\le\cF(N_s)\big(\| \eta_\delta\|_{H^{s+\tdm-\delta}}+N_{s+\tdm}\| \eta_\delta\|_{H^s}\big).
\end{aligned}
\eq
Therefore, we arrive at \eqref{contra:lin}-\eqref{est:cR1} with $\cR_1=\cR_2+\cR_3$.

Now it follows from equations \eqref{eq:etad}, \eqref{contra:lin} and the estimates  \eqref{est:cR0}, \eqref{est:cR1} that
\bq\label{req:etad}
\p_t\eta_\delta=-\frac{\s}{\mu^-}T_{\ld_1\ell_1}\eta_\delta+\wt \cR_1,
\eq
where $\wt \cR_1=-\frac{\s}{\mu^-}\cR_1-\cR_0$ satisfies 
\bq\label{est:wtcR1}
\| \wt\cR_1\|_{H^{s-\tdm}}\le (\frac{\s}{\mu^-}+\frac{\rho^-\g}{\mu^-})\cF(N_s)\big(\| \eta_\delta\|_{H^{s+\tdm-\delta}}+N_{s+\tdm}\| \eta_\delta\|_{H^s}\big),
\eq
where  $\cF$ depends only on $(h, s)$. An $H^s$ energy estimate for \eqref{req:etad} yields
\bq\label{dt:eetad}
\mez\frac{d}{dt}\| \eta_\delta\|_{H^s}^2\le -\frac{\s}{\mu^-}(T_{\ld_1\ell_1}\eta_\delta, \eta_\delta)_{H^s, H^s}+\|\wt \cR_1\|_{H^{s-\tdm}}\| \eta_\delta\|_{H^{s+\tdm}}.
\eq
The argument leading to \eqref{ee:d10} gives
\bq\label{ee:etad}
\begin{aligned}
 -(T_{\ld_1\ell_1}\eta_\delta, \eta_\delta)_{H^s, H^s}&\le -\frac{1}{\cF(\| \eta_1\|_{H^s})}\|\eta_\delta \|^2_{H^{s+\tdm}}+\cF(\| \eta_1\|_{H^s})\| \eta_\delta\|_{H^{s+\tdm}}\| \eta_\delta\|_{H^{s+\tdm-\delta}}.
\end{aligned}
\eq
Combining \eqref{dt:eetad}, \eqref{ee:etad} and \eqref{est:wtcR1}  we obtain 
\bq
\begin{aligned}
\mez\frac{d}{dt}\| \eta_\delta\|_{H^s}^2&\le -\frac{1}{\cF(N_s)}\|\eta_\delta \|^2_{H^{s+\tdm}}+\cF(N_s)\| \eta_\delta\|_{H^{s+\tdm}}\| \eta_\delta\|_{H^{s+\tdm-\delta}}+\cF(N_s)N_{s+\tdm}\| \eta_\delta\|_{H^{s}}\| \eta_\delta\|_{H^{s+\tdm}}
\end{aligned}
\eq
for some function $\cF$ depending only on $(h, s, \frac{\rho^-\g}{\mu^-}, \frac{\s}{\mu^-})$. By interpolation and Young's inequality we have
\begin{align*}
&\cF(N_s)\| \eta_\delta\|_{H^{s+\tdm}}\| \eta_\delta\|_{H^{s+\tdm-\delta}}\le \frac{1}{10\cF(N_s)}\|\eta_\delta \|^2_{H^{s+\tdm}}+\cF_1(N_s)\|\eta_\delta \|^2_{H^s},\\
&\cF(N_s)N_{s+\tdm}\| \eta_\delta\|_{H^{s}}\| \eta_\delta\|_{H^{s+\tdm}}\le \frac{1}{10\cF(N_s)}\|\eta_\delta \|^2_{H^{s+\tdm}}+\cF_2(N_s)N_{s+\tdm}^2\|\eta_\delta \|^2_{H^s}.
\end{align*}
It follows that
\bq
\frac{d}{dt}\| \eta_\delta\|_{H^s}^2\le -\frac{1}{\cF(N_s)}\|\eta_\delta \|^2_{H^{s+\tdm}}+\cF(N_s)N_{s+\tdm}^2\|\eta_\delta \|^2_{H^s}
\eq
for some $\cF$ depending only on $(h, s, \frac{\rho^-\g}{\mu^-}, \frac{\s}{\mu^-})$. Finally, since 
\[
\int_0^T N_{s+\tdm}^2(t)dt\le \|(\eta_1, \eta_2)\|_{Z^s(T)}^2,
\]
 a simple Gr\"onwall argument leads to \eqref{est:contreta}. 
\subsection{Proof of Theorem \ref{theo:1p}}
Consider an initial datum $\eta_0\in H^s(\Rr^d)$, $s>1+\frac d2$, satisfying $\dist(\eta_0, \Gamma^-)>2h>0$. We construct the sequence of approximate solutions $\eta_\eps$, $\eps\in (0, 1)$, that solve the ODE
\bq
\p_t\eta_\eps=-\frac{1}{\mu^-}J_\eps \Big[G^-(J_\eps \eta_\eps)\big(\s H(J_\eps \eta_\eps)+\rho^-\g J_\eps \eta_\eps\big)\Big],\quad \eta_\eps\vert_{t=0}=\eta_0,
\eq
where $J_\eps$ denotes the usual mollifier that cut off frequencies of size greater than $\eps^{-1}$. Each $\eta_\eps$ exists on some maximal time interval $[0, T_\eps)$ in light of the Cauchy-Lipschitz theorem and Theorems \ref{theo:estDN} and \ref{contract:DN} for the Dirichlet-Neumann operator. It is easy to check that the a priori estimates in Proposition \ref{prop:aprioriHs} and Lemma \ref{lemm:apriorih} remain valid for $\eta_\eps$. Consequently, a continuity argument guarantees the existence of a positive time $T$ such that $T<T_\eps$ for all $\eps\in (0, 1)$ and that on $[0, T]$ the uniform estimates
\begin{align}
&\| \eta_\eps\|_{Z^s(T)}\le \cF(\| \eta_0\|_{H^s}),\quad\inf_{t\in [0, T]}\dist(\eta_\eps(t), \Gamma^-)> h
\end{align}
hold for some $\cF$ depending only on $(h, s, \frac{\rho^-\g}{\mu^-}, \frac{\s}{\mu^-})$.  Theorem \ref{theo:contra} also holds for  $\eta_\eps$, giving  that the sequence $(\eta_\eps)$ is Cauchy in $Z^s(T)$ and thus converges to some $\eta\in Z^s(T)$. By virtue of  Theorems \ref{est:DN} and \ref{contract:DN} we can pass to the limit $\eps \to 0$ and obtain that $\eta$ is a solution of \eqref{eq:eta} with initial data $\eta_0$. Finally, uniqueness and stability follow at once from Theorem \ref{theo:contra}.  
\section{Proof of Theorem \ref{theo:2p}}\label{sec:2p}
\subsection{Regularity of $f^\pm$}
We first recall the well-posedness of variational solutions to \eqref{system:fpm}.
\begin{prop}[\protect{\cite[Proposition 4.8 and Remark 4.9]{NgPa}}]\label{prop:fpm}
Let $\eta \in W^{1, \infty}(\Rr^d)\cap H^\mez(\Rr^d)$ satisfy $\dist(\eta, \Gamma^\pm)>h>0$. Then there exists a unique variational solution $f^\pm \in \wt H^\mez_\pm(\Rr^d)$ to the system \eqref{system:fpm}. Moreover, $f^\pm$ satisfy 
\bq\label{variest:fpm:0}
\| f^\pm\|_{\wt H^\mez_\pm}\le C(1+\| \eta\|_{W^{1, \infty}})^2\| \s H(\eta)+\lb \rho\rb\g \eta\|_{H^\mez}
\eq
where the constant $C$ depends only on $(h, \mu^\pm)$.
\end{prop}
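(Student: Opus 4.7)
\textbf{Proof plan for Proposition \ref{prop:fpm}.}
The plan is to reduce \eqref{system:fpm} to a single coercive equation for $f^+$ and solve it by Lax--Milgram in a screened half-order Sobolev space. Setting $\mathfrak{h}:=\s H(\eta)+\lb\rho\rb\g\eta$, the first equation is the affine constraint $f^-=f^++\mathfrak{h}$, which I substitute into the transmission condition. Observing that $-G^+(\eta)$ is the standard positive self-adjoint Dirichlet--Neumann operator of $\Omega^+$ (built with its outward normal) while $G^-(\eta)$ is itself positive self-adjoint, the system becomes
\[
\Bigl(\tfrac{1}{\mu^+}\bigl(-G^+(\eta)\bigr)+\tfrac{1}{\mu^-}G^-(\eta)\Bigr)f^+\;=\;-\tfrac{1}{\mu^-}G^-(\eta)\mathfrak{h}.
\]
Under the standing hypothesis $\eta\in W^{1,\infty}\cap H^\mez$ one has $\mathfrak{h}\in H^\mez(\Rr^d)$ via tame estimates applied to $H(\eta)=-\cnx F(\na\eta)$ with $F$ smooth and vanishing at $0$, together with $\lb\rho\rb\g\eta\in H^\mez$.

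The variational framework uses the Dirichlet-energy identity $(G^-(\eta)u,v)_{L^2}=\int_{\Omega^-}\na_{x,y}\phi^-_u\cdot\na_{x,y}\phi^-_v\,dxdy$ and its analogue for $-G^+(\eta)$, where $\phi^\pm_f\in\dot H^1(\Omega^\pm)$ is the variational harmonic lift of $f$ with zero Neumann datum on $\Gamma^\pm$. I introduce the symmetric bilinear form
\[
a(u,v)=\tfrac{1}{\mu^+}\int_{\Omega^+}\na\phi^+_u\cdot\na\phi^+_v\,dxdy+\tfrac{1}{\mu^-}\int_{\Omega^-}\na\phi^-_u\cdot\na\phi^-_v\,dxdy
\]
on the Hilbert space $V=\wt H^\mez_+(\Rr^d)\cap\wt H^\mez_-(\Rr^d)$ equipped with the sum norm. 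Continuity of $a$ on $V$ follows from continuity of the lifting maps $\wt H^\mez_\pm\to\dot H^1(\Omega^\pm)$, whose norms depend only on $\|\eta\|_{\dot W^{1,\infty}}$ and $\|\underline b^\pm\|_{\dot W^{1,\infty}}$ (recall \eqref{Tr}). Coercivity is the reverse bound: the Dirichlet energy dominates $(1+\|\eta\|_{W^{1,\infty}})^{-2}\|u\|^2_{\wt H^\mez_\pm}$ with constants depending only on $h$. The linear form $L(v)=-\tfrac{1}{\mu^-}\int_{\Omega^-}\na\phi^-_{\mathfrak h}\cdot\na\phi^-_v\,dxdy$ is similarly estimated by $(1+\|\eta\|_{W^{1,\infty}})^2\|\mathfrak h\|_{H^\mez}\|v\|_V/\mu^-$. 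Lax--Milgram then yields a unique $f^+\in V$, and $f^-:=f^++\mathfrak h$ supplies the desired pair; uniqueness in \eqref{system:fpm} is inherited because the reformulation is an equivalence.

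Testing $a(f^+,f^+)=L(f^+)$ and dividing by $\|f^+\|_V$ gives
\[
\|f^+\|_{\wt H^\mez_+}\le C\,(1+\|\eta\|_{W^{1,\infty}})^2\|\s H(\eta)+\lb\rho\rb\g\eta\|_{H^\mez},
\]
with $C$ depending only on $(h,\mu^\pm)$; the bound on $f^-$ follows from the triangle inequality together with the continuous embedding $H^\mez\hookrightarrow\wt H^\mez_\pm$ (whose constant depends only on $h$ and the bottom slopes). I expect the main technical obstacle not to be any hard estimate but the functional bookkeeping: one must ensure that the bilinear form is well-posed on a single Hilbert space into which both $\wt H^\mez_\pm$ embed, and must track the distance $h$ and slope $\|\na\eta\|_{L^\infty}$ through the trace and inverse-trace constants so as to produce precisely the quadratic factor $(1+\|\eta\|_{W^{1,\infty}})^2$. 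Since the result is essentially the statement of \cite[Proposition~4.8 and Remark~4.9]{NgPa}, no new ingredient beyond Lax--Milgram and the screened-trace theory of \cite{LeoTice,Stri} is needed.
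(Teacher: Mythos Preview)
The paper does not give its own proof of this proposition; it is quoted from \cite[Proposition~4.8 and Remark~4.9]{NgPa}. Your Lax--Milgram scheme on $V=\wt H^\mez_+\cap\wt H^\mez_-$, with the bilinear form assembled from the two Dirichlet energies and continuity/coercivity supplied by the trace and lifting theory of \cite{LeoTice,Stri}, is exactly the standard route and is what \cite{NgPa} does; the bookkeeping you flag (tracking $h$ and $\|\na\eta\|_{L^\infty}$ through the trace constants to obtain the quadratic prefactor) is indeed the only real work.

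One small correction: the claim that $\eta\in W^{1,\infty}\cap H^\mez$ alone yields $\mathfrak h\in H^\mez$ ``via tame estimates'' is not right, since $H(\eta)\in H^\mez$ would require $\na\eta\in H^{\tdm}$, which does not follow from the stated hypotheses. The proposition is meant to be read (and is applied in the paper immediately afterwards, see \eqref{variest:fpm}) with $\|\s H(\eta)+\lb\rho\rb\g\eta\|_{H^\mez}$ treated as a given finite quantity; with that reading your argument goes through unchanged.
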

It follows from \eqref{variest:fpm:0} and Theorem \ref{est:nonl} that
\bq\label{variest:fpm}
\| f^\pm\|_{\wt H^\mez_\pm}\le \cF(\| \eta\|_{W^{1, \infty}})(\s\| \eta\|_{H^{\frac 52}}+\lb \rho\rb \g \| \eta\|_{H^\mez})
\eq
for some function $\cF$ depending only on $(h, \mu^\pm)$. Using the variational estimate \eqref{variest:fpm} and the paralinearization Theorem \ref{paralin:ABZ}, we prove that higher Sobolev regularity for $f^\pm$ can be transferred from $\eta$.
\begin{prop}\label{prop:estfpm}
Let $f^\pm$ be the solution of \eqref{system:fpm} as given by Proposition \ref{prop:fpm}. If $\eta\in H^{s+\tdm}(\Rr^d)$ with $s>1+\frac d2$ then $f^\pm\in \wt H^{s-\mez}_\pm(\Rr^d)$ and 
\bq\label{est:fpm}
\| f^\pm\|_{\wt H^r_\pm}\le \cF(\| \eta\|_{H^s})(\s\| \eta\|_{H^{r+2}}+\lb\rho\rb\g\| \eta\|_{H^r})
\eq
for all $r\in [\mez, s-\mez]$, where $\cF$ depends only on $(h, s, r, \mu^\pm)$.
\end{prop}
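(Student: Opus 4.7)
The plan is a bootstrap argument on $r$, with base case $r = \mez$ supplied by the variational estimate \eqref{variest:fpm} of Proposition \ref{prop:fpm}.

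\textbf{Step 1 (Reduction to a scalar equation).} I would use the first identity in \eqref{system:fpm} to eliminate $f^+$ via $f^+ = f^- - \s H(\eta) - \lb\rho\rb\g\eta$ and substitute into the transmission condition, yielding the scalar equation
\begin{equation*}
\Big(\frac{1}{\mu^+}G^+(\eta) - \frac{1}{\mu^-}G^-(\eta)\Big) f^- = \frac{1}{\mu^+}G^+(\eta)\big(\s H(\eta) + \lb\rho\rb\g\eta\big).
\end{equation*}

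\textbf{Step 2 (Paralinearization and ellipticity).} Applying Theorem \ref{paralin:ABZ} to $G^-(\eta)$ and its analogue for $G^+(\eta)$ (whose principal symbol carries a sign opposite to $\lambda$ because $n$ is inward-pointing for $\Omega^+$), the left-hand side operator would take the form $T_{c\, \lambda} + R_0$, where $c$ is a nonzero constant depending on $\mu^\pm$ and $R_0$ is smoothing of order $1 - \delta$. Since $\lambda(x, \xi) \ge |\xi|$ by \eqref{ld}, the principal part $T_{c \lambda}$ is elliptic of order one.

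\textbf{Step 3 (Elliptic inversion and bootstrap).} Invoking the paradifferential parametrix (Theorem \ref{theo:sc}) to invert $T_{c\lambda}$ modulo a smoothing operator and combining with Theorem \ref{theo:estDN} (applied with $\sigma = \min(r+1, s)$) together with Theorem \ref{est:nonl} to estimate $H(\eta)$, I would obtain, for $r \in [\mez, s-\mez]$ and some fixed $\delta \in (0, \min(\mez, s-1-\frac d2))$,
\begin{equation*}
\|f^-\|_{\wt H^r_-} \le \cF(\|\eta\|_{H^s})\big(\s\|\eta\|_{H^{r+2}} + \lb\rho\rb\g\|\eta\|_{H^r} + \|f^-\|_{\wt H^{r-\delta}_-}\big).
\end{equation*}
Starting from the base case \eqref{variest:fpm} and iterating this inequality, each step upgrades the regularity of $f^-$ by $\delta$; after finitely many steps the bootstrap terminates at $r = s - \mez$, proving \eqref{est:fpm} for $f^-$. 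Finally, from $f^+ = f^- - \s H(\eta) - \lb\rho\rb\g\eta$ and the membership $f^+ \in \wt H^\mez_+$ from Proposition \ref{prop:fpm}, the homogeneous component $\nabla f^+ \in H^{r-1}$ is controlled by the corresponding bound on $\nabla f^-$ plus the standard Sobolev bounds on $H(\eta)$ and $\eta$, yielding \eqref{est:fpm} for $f^+$.

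\textbf{Main obstacle.} The principal subtlety is the screened/homogeneous nature of $\wt H^r_\pm$: since $f^\pm$ are defined only up to additive constants and $\wt H^\mez_+$ and $\wt H^\mez_-$ are tailored to the distinct boundaries $\Gamma^\pm$, one cannot directly apply $L^2$-based elliptic regularity. The remedy is to let the paradifferential parametrix control only the homogeneous piece $\nabla f^-$ on the standard inhomogeneous scale $H^{r-1}$, and to rely on the variational estimate \eqref{variest:fpm} of Proposition \ref{prop:fpm} to supply the screened low-frequency component at the base step. A secondary technical constraint is the admissible range $\sigma \in [\mez, s-\mez]$ in Theorem \ref{paralin:ABZ}, which forces $\delta$ to be chosen small enough that every iteration stays within this range, while positive so that the bootstrap terminates in finitely many steps.
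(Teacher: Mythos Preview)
Your proposal is correct and follows essentially the same approach as the paper: paralinearize $G^\pm(\eta)$ via Theorem \ref{paralin:ABZ}, exploit the ellipticity of $T_\lambda$ through a paradifferential parametrix (this is exactly the paper's inequality \eqref{ineq}), and bootstrap in steps of size $\delta$ from the variational base case \eqref{variest:fpm}. The only organizational difference is that you eliminate $f^+$ first and write a scalar equation for $f^-$, whereas the paper keeps $G^+(\eta)f^+$ and $G^-(\eta)f^-$ separate, arriving at \eqref{Tld:f-} with remainders $R^+(\eta)f^+$ and $R^-(\eta)f^-$, and then bootstraps both $f^\pm$ simultaneously; this avoids having to check that $f^-\in\wt H^\sigma_+$ when bounding $R^+(\eta)f^-$ (a point your ``main obstacle'' paragraph correctly anticipates and resolves).
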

\begin{proof}
Fix $\delta \in (0, s-1-\frac d2)$ and $\delta\le \mez$. First, we claim that for $\sigma\in [\mez, s-\mez-\delta]$, if $f^\pm\in \wt H^\sigma_\pm$ then there exists $\cF$ depending only on $(h, s, \sigma, \delta, \mu^\pm)$ such that
\bq\label{bootstrapf:0}
\| T_\ld f^\pm\|_{H^{\sigma-1+\delta}}\le \cF(\| \eta\|_{H^s})\| f^\pm\|_{\wt H^\sigma_\pm}+\cF(\| \eta\|_{H^s})(\s\| \eta\|_{H^{\sigma+2+\delta}}+\lb\rho\rb\g\| \eta\|_{H^{\sigma+\delta}}).
\eq
Indeed, according to Theorem \ref{paralin:ABZ} there exists $\cF$ depending only on $(h, s, \sigma, \delta)$ such that
\begin{align*}
&G^\pm (\eta)f^\pm=\mp T_\lambda f^\pm+R^\pm(\eta)f^\pm,\\
&\| R^\pm(\eta)f^\pm\|_{H^{\sigma-1+\delta}}\le \cF(\| \eta\|_{H^s})\Vert f^\pm\Vert_{\wt H^\sigma_\pm}.
\end{align*}
Then using the  system  \eqref{system:fpm} we obtain after rearranging terms that 
\bq\label{Tld:f-}
T_\ld f^-=\frac{\mu^-}{\mu^++\mu^-}T_\ld (\s H(\eta)+\lb \rho\rb\g \eta)+\frac{\mu^-}{\mu^++\mu^-}R^+(\eta)f^+-\frac{\mu^+}{\mu^++\mu^-}R^-(\eta)f^-
\eq
 which together with Theorem \ref{theo:sc} (i) and the bound
  \[
  \|\s H(\eta)+\lb \rho\rb\g \eta\|_{H^{\sigma+\delta}} \le \cF(\| \eta\|_{H^s})(\s\| \eta\|_{H^{\sigma+2+\delta}}+\lb\rho\rb\g\| \eta\|_{H^{\sigma+\delta}})
  \]
proves the claim \eqref{bootstrapf:0}. Note that $\sigma+2+\delta \in [\frac 52+\delta, s+\tdm]$.
  
 We now bootstrap the regularity for $f^\pm$ using \eqref{bootstrapf:0} and the inequality
 \bq\label{ineq}
\| u\|_{H^{1, \mu}}\les \| u\|_{H^{1, \mez}}+ \mathcal{F}(\Vert \eta\Vert_{H^s})(\| T_\ld u\|_{H^{\mu-1}}+\| u\|_{H^{1, \mu -\delta}}),\quad\mu \ge \mez.
\eq
Let us  first prove \eqref{ineq}.  By virtue of Theorem \ref{theo:sc} (ii) and Remark \ref{rema:low}, we have for $\mu \in \Rr$,
\bq\label{ineq:0}
\begin{aligned}
\| \Psi(D_x)u\|_{H^\mu}=\| T_1u\|_{H^\mu}&\le \| T_{\ld^{-1}}T_\ld u\|_{H^\mu}+\|T_1-T_{\ld^{-1}}T_\ld u\|_{H^\mu}\\
& \le \mathcal{F}(\Vert \eta\Vert_{H^s})(\| T_\ld u\|_{H^{\mu-1}}+\| u\|_{H^{1, \mu -\delta}}),
\end{aligned}
\eq
where the cut-off $\Psi$ removing the low frequency part  is defined by \eqref{cond.psi}. On the other hand, for $\mu \ge \mez$ we have
\[
\| u\|_{H^{1, \mu}}\les \| u\|_{H^{1, \mez}}+\| \Psi(D_x) u\|_{H^\mu}
\]
which combined with \eqref{ineq:0} yields  \eqref{ineq}. Now applying \eqref{ineq} with $\mu=\sigma+\delta$, $\sigma=\mez$ and invoking \eqref{variest:fpm} and \eqref{bootstrapf:0} we deduce that 
\[
\| f^\pm\|_{H^{1, \mez+\delta}}\le \cF(\| \eta\|_{H^s})(\s\| \eta\|_{H^{\frac 52+\delta}}+\lb\rho\rb\g\| \eta\|_{H^{\mez+\delta}})
\]
where $\cF$ depends only on $(h, s, \sigma, \delta, \mu^\pm)$. We have thus bootstrapped the regularity of $f^\pm$ from $H^{1, \mez}$ to $H^{1, \mez+\delta}$ by using \eqref{bootstrapf:0} with $\sigma=\mez$. Since \eqref{bootstrapf:0} holds for $\sigma\in [\mez, s-\mez-\delta]$, an induction argument leads to 
\[
\| f^\pm\|_{H^{1, r}}\le \cF(\| \eta\|_{H^s})(\s\| \eta\|_{H^{r+2}}+\lb\rho\rb\g\| \eta\|_{H^r})
\]
for all $r\in [\mez, s-\mez]$. In conjunction  with \eqref{variest:fpm} and the definition \eqref{def:wtHs} of $\wt H^r_\pm$, this yields \eqref{est:fpm}. 
\end{proof}
\begin{rema}
The estimate \eqref{est:fpm} shows that $f^\pm$ behave like $\s H(\eta)+\lb \rho\rb \g\eta$.
\end{rema}
\subsection{Paradifferential reduction and a priori estimates}
Assume that  $\eta\in Z^s(T)$ with $s>1+\frac d2$ solves \eqref{eq:eta2p} and satisfies
\bq\label{cdh:2p}
\inf_{t\in [0, T]}\dist(\eta(t), \Gamma^\pm)>h>0.
\eq
Moreover,  let  $f^\pm\in \wt H^{s-\mez}_\pm$ be the solution of \eqref{system:fpm} as given by Propositions  \ref{prop:fpm} and \ref{prop:estfpm}.
\begin{prop}\label{prop:reduce:2p}
For $\delta \in \big(0,  s-1-\frac d2\big)$, $\delta \le \mez$,  there exists $\cF:\Rr^+\to \Rr^+$ depending only on $(h, s, \delta, \mu^\pm)$ such that
\begin{align}\label{eta:reduce2p}
&\p_t\eta=\frac{-\s }{\mu^++\mu^-}T_{\ld \ell} \eta+g,\\ \label{estg:2p}
&\| g\|_{H^{s-\tdm+\delta}}\le \cF(\| \eta\|_{H^s})(\s\| \eta\|_{H^{s+\tdm}}+\lb \rho\rb\g \|\eta\|_{H^{s-\mez+\delta}}).
\end{align}
\end{prop}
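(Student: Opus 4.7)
The plan is to adapt the one-phase reduction (Proposition \ref{prop:reduce}) by using the transmission system \eqref{system:fpm} to replace $G^-(\eta)f^-$ by an expression involving only $\eta$. First, I apply Theorem \ref{paralin:ABZ} with $\sigma=s-\mez$ to write
\[
G^-(\eta)f^-=T_\ld f^-+R^-(\eta)f^-,\qquad \| R^-(\eta)f^-\|_{H^{s-\tdm+\delta}}\le \cF(\| \eta\|_{H^s})\| f^-\|_{\wt H^{s-\mez}_-}.
\]
Proposition \ref{prop:estfpm} with $r=s-\mez$ then controls $\| f^\pm\|_{\wt H^{s-\mez}_\pm}$ by $\cF(\| \eta\|_{H^s})(\s\|\eta\|_{H^{s+\tdm}}+\lb\rho\rb\g\| \eta\|_{H^{s-\mez}})$, which is already in the form allowed for the right-hand side $g$ of \eqref{estg:2p}; in particular, both $R^\pm(\eta)f^\pm$-type remainders will be absorbed into $g$.

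Next, I substitute identity \eqref{Tld:f-}, derived inside the proof of Proposition \ref{prop:estfpm}, namely
\[
T_\ld f^-=\frac{\mu^-}{\mu^++\mu^-}T_\ld\big(\s H(\eta)+\lb\rho\rb\g\eta\big)+\frac{\mu^-}{\mu^++\mu^-}R^+(\eta)f^+-\frac{\mu^+}{\mu^++\mu^-}R^-(\eta)f^-,
\]
into $\p_t\eta=-\frac{1}{\mu^-}\bigl[T_\ld f^-+R^-(\eta)f^-\bigr]$. The elementary identity $\mu^+/[\mu^-(\mu^++\mu^-)]-1/\mu^-=-1/(\mu^++\mu^-)$ makes the two $R^-$-contributions coalesce into a single term with coefficient $-(\mu^++\mu^-)^{-1}$, so that the same denominator $\mu^++\mu^-$ appears both in front of $R^+(\eta)f^+$ and in front of the main term $T_\ld(\s H(\eta)+\lb\rho\rb\g\eta)$. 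This is the reason why the prefactor $\s/(\mu^++\mu^-)$ of \eqref{eta:reduce2p} arises naturally from the transmission condition.

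What remains is the one-phase-style reduction of the main term. I paralinearize $H(\eta)=T_\ell\eta+R_H$ as in the proof of Proposition \ref{prop:reduce} (via Theorem \ref{paralin:nonl}), with $\|R_H\|_{H^{s-\mez+\delta}}\le\cF(\| \eta\|_{H^s})\| \eta\|_{H^{s+\tdm}}$, apply the composition rule Theorem \ref{theo:sc} (ii) together with Lemma \ref{lemm:symbol} to write $T_\ld T_\ell=T_{\ld\ell}+Q$ with $Q$ of order $3-\delta$, and use that $T_\ld$ is of order one to bound both $T_\ld R_H$ and the lower-order contribution $\lb\rho\rb\g\, T_\ld \eta$ in $H^{s-\tdm+\delta}$ by the right-hand side of \eqref{estg:2p}. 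Collecting all pieces produces \eqref{eta:reduce2p}--\eqref{estg:2p}.

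The main obstacle is not the symbolic calculus, which mirrors the one-phase argument almost verbatim, but securing the right regularity for the auxiliary traces $f^\pm$. Without the sharp estimate of Proposition \ref{prop:estfpm} transferring $\tdm$ derivatives of $\eta$ (through $\s H(\eta)$) onto $f^\pm$, the remainders $R^\pm(\eta)f^\pm$ could not be absorbed into $g$ at the level $H^{s-\tdm+\delta}$. The transmission system \eqref{system:fpm}, which couples $\s H(\eta)$ directly into both $f^\pm$, is precisely what makes the whole reduction close with the $\delta$-derivative smoothing asserted in \eqref{estg:2p}.
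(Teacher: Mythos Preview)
Your proposal is correct and follows essentially the same approach as the paper's own proof: both use the paralinearization $G^-(\eta)f^-=T_\ld f^-+R^-(\eta)f^-$, substitute the identity \eqref{Tld:f-}, invoke Proposition \ref{prop:estfpm} to control $\|f^\pm\|_{\wt H^{s-\mez}_\pm}$ and hence $R^\pm(\eta)f^\pm$, and then reduce $T_\ld(\s H(\eta)+\lb\rho\rb\g\eta)$ to $\s T_{\ld\ell}\eta$ exactly as in Proposition \ref{prop:reduce} via \eqref{paralin:H} and Theorem \ref{theo:sc}. The only cosmetic difference is that the paper first packages everything inside $T_\ld f^-$ into a single remainder $g_0$ and then adds the outer $R^-(\eta)f^-$ at the end, whereas you substitute first and make the algebraic cancellation $\mu^+/[\mu^-(\mu^++\mu^-)]-1/\mu^-=-1/(\mu^++\mu^-)$ explicit; this is the same computation in a different order.
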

\begin{proof}
We rewrite \eqref{Tld:f-} as
\begin{align*}
T_\ld f^-
&=\frac{\s\mu^-}{\mu^++\mu^-}T_{\ld \ell} \eta+\frac{\s\mu^-}{\mu^++\mu^-}(T_\ld  T_\ell \eta-T_{\ld \ell})\eta +\frac{\s\mu^-}{\mu^++\mu^-}T_\ld (H(\eta)-T_\ell \eta)\\
&\quad+ \frac{\lb \rho\rb\g \mu^-}{\mu^++\mu^-}T_\ld\eta+\frac{\mu^-}{\mu^++\mu^-}R^+(\eta)f^+-\frac{\mu^+}{\mu^++\mu^-}R^-(\eta)f^-,
\end{align*}
where by virtue of Theorem \ref{paralin:ABZ} and Proposition \ref{prop:estfpm},
\bq\label{est:Rfpm}
\begin{aligned}
\| R^\pm(\eta)f^\pm\|_{H^{s-\tdm+\delta}}&\le\cF(\| \eta\|_{H^s})\| f^\pm\|_{\wt H_\pm^{s-\mez}}\\
&\les \cF(\| \eta\|_{H^s})(\s\| \eta\|_{H^{s+\tdm}}+\lb \rho\rb\g \| \eta\|_{H^{s-\mez}}).
\end{aligned}
\eq
Using  \eqref{paralin:H} and Theorem \ref{theo:sc} (i) (ii), we can bound 
\begin{align*}
&\| (T_\ld T_\ell-T_{\ld \ell})\eta\|_{H^{s-\tdm+\delta}}+\| T_{\ld}(H(\eta)-T_\ell\eta)\|_{H^{s-\tdm+\delta}}\le \cF(\| \eta\|_{H^s})\| \eta\|_{H^{s+\tdm}},\\
&\| T_\ld\eta\|_{H^{s-\tdm+\delta}}\le \cF(\| \eta\|_{H^s})\| \eta\|_{H^{s-\mez+\delta}}.
\end{align*}
We thus obtain
 \begin{align*}
&T_\ld f^-=\frac{\s \mu^-}{\mu^++\mu^-}T_{\ld \ell} \eta+g_0,\\
&\| g_0\|_{H^{s-\tdm+\delta}}\le \cF(\| \eta\|_{H^s})(\s\| \eta\|_{H^{s+\tdm}}+\lb \rho\rb\g \|\eta\|_{H^{s-\mez+\delta}}),
\end{align*}
for some $\cF$ depending only on $(h, s, \delta, \mu^\pm)$. Plugging this into the paralinearization  
\[
G^-(\eta)f^-=T_\ld f^-+R^-(\eta)f^-
\]
 and using \eqref{est:Rfpm} and \eqref{eq:eta2p} we conclude the proof.
\end{proof}
It follows from \eqref{estg:2p} that 
\bq
\| g\|_{H^{s-\tdm+\delta}}\le (\s+\lb \rho\rb \g)\cF(\| \eta\|_{H^s})\| \eta\|_{H^{s+\tdm}}.
\eq
We have thus reduced the two-phase Muskat problem to the paradifferential parabolic equation \eqref{eta:reduce2p} which is of the same form as equation \eqref{eq:reduce} for the one-phase problem. Therefore, the proofs of Proposition \ref{prop:aprioriHs} and Lemma \ref{lemm:apriorih} yield the following  a priori estimates.
\begin{prop}\label{apriori:2p}
There exist $\tt\in (0, 1)$ depending only on $s$ and  $\cF:\Rr^+\to \Rr^+$ depending only on $(h, s, \s, \mu^\pm, \lb \rho\rb\g)$ such that
\bq
\| \eta\|_{Z^s(T)}\le \cF\Big(\| \eta(0)\|_{H^s}+T\cF\big(\| \eta\|_{Z^s(T)}\big)\Big)
\eq
and
\bq
\inf_{t\in [0, T]}\dist(\eta(t), \Gamma^-)\ge \dist(\eta(0), \Gamma^-)-T^\tt\cF(\| \eta\|_{Z^s(T)}).
\eq
\end{prop}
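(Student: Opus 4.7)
The plan is to invoke the one-phase arguments of Proposition \ref{prop:aprioriHs} and Lemma \ref{lemm:apriorih} essentially verbatim. Equation \eqref{eta:reduce2p} has the same structure as \eqref{eq:reduce}, with the coefficient $\tfrac{\s}{\mu^-}$ replaced by $\tfrac{\s}{\mu^++\mu^-}$, and with source $g$ obeying the estimate \eqref{estg:2p}, which is formally identical to \eqref{estg:reduce} after substituting $\lb\rho\rb\g$ for $\rho^-\g$. Hence none of the analytic machinery needs to be revisited; only the parameters on which $\cF$ depends change.

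For the $Z^s(T)$ bound I would apply $\langle D_x\rangle^s$ to \eqref{eta:reduce2p}, pair the resulting equation with $\eta_s=\langle D_x\rangle^s\eta$ in $L^2$, and bound the three terms exactly as in \eqref{ee:d}--\eqref{ee:d10}: Theorem \ref{theo:sc} together with Lemma \ref{lemm:symbol} and the ellipticity \eqref{ldell} give the coercivity estimate
\[
-(T_{\ld\ell}\eta_s,\eta_s)_{L^2}\le -\tfrac{1}{\cF(\|\eta\|_{H^s})}\|\eta\|^2_{H^{s+\tdm}}+\cF(\|\eta\|_{H^s})\|\eta\|_{H^{s+\tdm}}\|\eta\|_{H^{s+\tdm-\delta}};
\]
the commutator $[\langle D_x\rangle^s,T_{\ld\ell}]$ is controlled by Theorem \ref{theo:sc}(ii) as an operator of order $s+3-\delta$; and the source $g$ is handled via \eqref{estg:2p}. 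Exploiting the $\delta$-gain to interpolate $\|\eta\|_{H^{s+\tdm-\delta}}$ between $\|\eta\|_{H^s}$ and $\|\eta\|_{H^{s+\tdm}}$ and absorbing the top-order factor via Young's inequality leads to
\[
\tfrac{d}{dt}\|\eta\|_{H^s}^2\le -\tfrac{1}{\cF(\|\eta\|_{H^s})}\|\eta\|^2_{H^{s+\tdm}}+\cF(\|\eta\|_{H^s})\|\eta\|^2_{H^s},
\]
with $\cF$ now depending on $(h,s,\s,\mu^\pm,\lb\rho\rb\g)$. Integrating in time and applying a Gr\"onwall argument yields the first inequality.

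For the distance drop, I would integrate \eqref{eq:eta2p} to obtain $\|\eta(t)-\eta(0)\|_{L^2}\le \tfrac{1}{\mu^-}\int_0^t\|G^-(\eta)f^-\|_{L^2}\,dr$, then invoke Theorem \ref{theo:estDN} with $\sigma=1$ to bound the integrand by $\cF(\|\eta\|_{H^s})\|f^-\|_{\wt H^1_-}$. Proposition \ref{prop:estfpm} applied with $r=1$ (which lies in $[\tfrac{1}{2},s-\tfrac{1}{2}]$ since $s>1+\tfrac{d}{2}$) bounds $\|f^-\|_{\wt H^1_-}$ by $\cF(\|\eta\|_{H^s})(\s\|\eta\|_{H^3}+\lb\rho\rb\g\|\eta\|_{H^1})$. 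Cauchy--Schwarz in $t$ then gives $\|\eta(t)-\eta(0)\|_{L^2}\le t^{1/2}\cF(\|\eta\|_{Z^s(t)})$ since $s+\tdm>3$, after which interpolation of $H^{s'}$ for some $s'\in(1+\tfrac{d}{2},s)$ between $L^2$ and $H^s$ and the embedding $H^{s'}\hookrightarrow L^\infty$ convert this into the claimed $T^\tt$ drop, exactly as in Lemma \ref{lemm:apriorih}.

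The main (and essentially only) obstacle is bookkeeping: the two-phase formulation replaces the explicit source $\s H(\eta)+\rho^-\g\eta$ of the one-phase equation by the implicit trace $f^-$ determined through \eqref{system:fpm}, so the dependence of $\cF$ on $(\mu^\pm,\lb\rho\rb\g)$ must be carefully propagated through Proposition \ref{prop:estfpm} and through the paradifferential reduction of Proposition \ref{prop:reduce:2p}. Once this is tracked, the energy identity and the $L^2$ drift argument close in precisely the same form as in the one-phase case.
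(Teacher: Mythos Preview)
Your proposal is correct and follows essentially the same approach as the paper, which simply observes that the reduction \eqref{eta:reduce2p}--\eqref{estg:2p} has the same form as \eqref{eq:reduce}--\eqref{estg:reduce} and then invokes the proofs of Proposition \ref{prop:aprioriHs} and Lemma \ref{lemm:apriorih} verbatim. Your explicit handling of the distance-drop argument via Theorem \ref{theo:estDN} and Proposition \ref{prop:estfpm} (with $r=1$) to control $\|G^-(\eta)f^-\|_{L^2}$ is precisely the adaptation the paper leaves implicit when it says those one-phase proofs ``yield'' the two-phase estimates.
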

\subsection{Contraction estimates}
Considering two solutions $\eta_1$ and $\eta_2$ in $Z^s(T)$ of \eqref{eq:eta2p} that satisfy condition \eqref{cdh:2p}, we prove a contraction estimate in $Z^s(T)$ for the difference $\eta_1-\eta_2$. 
\begin{theo}\label{theo:contra:2p}
There exists $\cF:\Rr^+\to \Rr^+$ depending only on  $(h, s, \s, \mu^\pm, \lb \rho\rb\g)$ such that
\bq\label{contra:eta:2p}
\| \eta_1-\eta_2\|_{Z^s(T)}\le \cF\big(\|(\eta_1, \eta_2)\|_{Z^s(T)}\big)\| (\eta_1-\eta_2)\vert_{t=0}\|_{H^s}.
\eq
\end{theo}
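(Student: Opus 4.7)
The plan is to parallel the proof of Theorem \ref{theo:contra}, taking as starting point the fact that the two-phase Muskat equation \eqref{eq:eta2p} reduces, via the transmission system \eqref{system:fpm}, to a parabolic paradifferential equation of exactly the same shape as in the one-phase case (Propositions \ref{prop:reduce} and \ref{prop:reduce:2p}). Setting $\eta_\delta=\eta_1-\eta_2$, $f^\pm_\delta=f_1^\pm-f_2^\pm$ and $N_r=\|(\eta_1,\eta_2)\|_{H^r}$, and fixing $\delta\in(0,s-1-\tfrac d2)$ with $\delta\le\mez$, I start from
\begin{equation*}
\p_t\eta_\delta=-\tfrac{1}{\mu^-}G^-(\eta_1)f^-_\delta-\tfrac{1}{\mu^-}\bigl[G^-(\eta_1)-G^-(\eta_2)\bigr]f_2^-,
\end{equation*}
the second term being controlled in $H^{s-\tdm}$ by Theorem \ref{contract:DN} combined with the bound $\|f_2^-\|_{\wt H^{s-\mez}_-}\le \cF(N_s)(\s N_{s+\tdm}+\lb\rho\rb\g N_s)$ from Proposition \ref{prop:estfpm}. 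The goal is then to rewrite the first term so that the whole equation takes the form $\p_t\eta_\delta=-\tfrac{\s}{\mu^++\mu^-}T_{\ld_1\ell_1}\eta_\delta+\wt\cR$ with $\|\wt\cR\|_{H^{s-\tdm}}\le \cF(N_s)(\|\eta_\delta\|_{H^{s+\tdm-\delta}}+N_{s+\tdm}\|\eta_\delta\|_{H^s})$, after which the $H^s$ energy argument from the proof of Theorem \ref{theo:contra} (the parabolic lower bound \eqref{ee:etad}, interpolation and Young, then Gr\"onwall using $\int_0^T N_{s+\tdm}^2\,dt\le \|(\eta_1,\eta_2)\|_{Z^s(T)}^2$) concludes.

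The essential new step is a contraction estimate for $f^\pm$. Subtracting the two copies of \eqref{system:fpm}, I find that $f^\pm_\delta$ solves a transmission system of the same shape as \eqref{system:fpm} but with forcing
\begin{equation*}
\s(H(\eta_1)-H(\eta_2))+\lb\rho\rb\g\,\eta_\delta \quad\text{and}\quad \tfrac{1}{\mu^+}(G^+(\eta_2)-G^+(\eta_1))f_2^++\tfrac{1}{\mu^-}(G^-(\eta_1)-G^-(\eta_2))f_2^-,
\end{equation*}
which are controlled by Theorem \ref{contract:DN} (applied to $f_2^\pm$) and by Theorem \ref{est:diffF(U)} (as used in Theorem \ref{theo:contra}) respectively. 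I will then run the variational argument of Proposition \ref{prop:fpm} at the base regularity and iterate the bootstrap of Proposition \ref{prop:estfpm}, pairing the key identity \eqref{Tld:f-} (now applied to $f^\pm_\delta$ with extra low-order contraction source terms) with the inequality \eqref{ineq}, to arrive at
\begin{equation*}
\| f^\pm_\delta\|_{\wt H^{s-\mez}_\pm}\le \cF(N_s)\bigl(\s\|\eta_\delta\|_{H^{s+\tdm-\delta}}+(\s N_{s+\tdm}+\lb\rho\rb\g)\|\eta_\delta\|_{H^s}\bigr).
\end{equation*}

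With this contraction in hand, the paralinearization $G^-(\eta_1)f^-_\delta=T_{\ld_1}f^-_\delta+R^-(\eta_1)f^-_\delta$ from Theorem \ref{paralin:ABZ} reduces the first term to analyzing $T_{\ld_1}f^-_\delta$. Subtracting \eqref{Tld:f-} written at $(\eta_1,f_1^-)$ and $(\eta_2,f_2^-)$ produces $T_{\ld_1}f^-_\delta$ equal to $\tfrac{\s\mu^-}{\mu^++\mu^-}T_{\ld_1}(H(\eta_1)-H(\eta_2))+\tfrac{\lb\rho\rb\g\mu^-}{\mu^++\mu^-}T_{\ld_1}\eta_\delta+(T_{\ld_2}-T_{\ld_1})f_2^-$ plus differences of the remainders $R^\pm(\eta_j)f^\pm_j$. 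The terms involving symbol differences $\ld_1-\ld_2$ and $\ell_1-\ell_2$ are handled by Theorem \ref{theo:sc}(i) together with Lemma \ref{lemm:symbol}; the curvature contraction uses Lemma \ref{lemm:contraH}; the composition error $T_{\ld_1}T_{\ell_1}-T_{\ld_1\ell_1}$ is of order $3-\delta$ by Theorem \ref{theo:sc}(ii) and is absorbed as in \eqref{Tld1l1}; and the contractions of the $R^\pm(\eta_j)f^\pm_j$ are controlled by combining Theorem \ref{paralin:ABZ} with the $f^\pm_\delta$ bound just derived (together with the a priori $\|f_j^\pm\|_{\wt H^{s-\mez}_\pm}$ estimate of Proposition \ref{prop:estfpm} for the terms where the DN operator changes). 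The main obstacle is precisely this contraction estimate for the transmission problem: because $f^\pm$ are defined only implicitly and are coupled through both Dirichlet-Neumann operators, propagating control from the variational $\wt H^{\mez}_\pm$ level up to $\wt H^{s-\mez}_\pm$ via the bootstrap of Proposition \ref{prop:estfpm} must be performed with care to ensure that the contraction terms on the right do not cost extra powers of $N_{s+\tdm}$ beyond what the Gr\"onwall step can absorb.
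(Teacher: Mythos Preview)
Your proposal is correct and follows essentially the same route as the paper: a contraction estimate for $f^\pm_\delta$ obtained by a variational $\wt H^{\mez}_\pm$ bound plus the bootstrap via \eqref{ineq}, then reduction of the equation for $\eta_\delta$ to the form $\p_t\eta_\delta=-\tfrac{\s}{\mu^++\mu^-}T_{\ld_1\ell_1}\eta_\delta+\wt\cR$ and conclusion by the one-phase energy argument. The only tactical difference is that the paper derives the identity for $T_{\ld_1}f^-_\delta$ directly from the difference of the transmission equations $\tfrac{1}{\mu^-}G^-(\eta_1)f^-_\delta-\tfrac{1}{\mu^+}G^+(\eta_1)f^+_\delta=\dots$ before paralinearizing (so only $R^\pm(\eta_1)f^\pm_\delta$ and $[G^\pm(\eta_1)-G^\pm(\eta_2)]f_2^\pm$ appear, never $R^\pm(\eta_1)-R^\pm(\eta_2)$), whereas you subtract \eqref{Tld:f-} at the two solutions; your path works too since $R^\pm(\eta_1)-R^\pm(\eta_2)=(G^\pm(\eta_1)-G^\pm(\eta_2))\mp T_{\ld_1-\ld_2}$, but note that your displayed bound for $\|f^\pm_\delta\|_{\wt H^{s-\mez}_\pm}$ should carry $\s\|\eta_\delta\|_{H^{s+\tdm}}$ rather than $\s\|\eta_\delta\|_{H^{s+\tdm-\delta}}$---in practice you only need the estimate at level $\wt H^{s-\mez-\delta}_\pm$ (where the $-\delta$ gain does appear) to control $R^-(\eta_1)f^-_\delta$ in $H^{s-\tdm}$.
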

\subsubsection{Contraction estimates for $f^\pm$}
For $j=1, 2$ let $f^\pm_j$ solve
\bq\label{system:fpmj}
\begin{cases}
 f^-_j-f^+_j=k_j:=\s H(\eta_j)+\lb \rho\rb\g\eta_j,\\
\frac{1}{\mu^+}G^+(\eta_j)f^+_j-\frac{1}{\mu^-}G^-(\eta_j)f^-_j=0.
\end{cases}
\eq
We set $f^\pm_\delta=f^\pm_1-f^\pm_2$, $k_\delta=k_1-k_2$, $\eta_\delta=\eta_1-\eta_2$, where the subscript $\delta$ only signifies the difference. We also  recall the notation \eqref{Nr}
\[
N_r=\| (\eta_1, \eta_2)\|_{H^r}.
\]
\begin{lemm}Let $\delta\in (0, s-1-\frac d2)$ and $\delta\le\mez$.

1) For each $r\in [\mez, s-\mez]$, there exists $\cF$ depending only on $(h, s, r, \mu^\pm)$ such that
\bq\label{contra:fpmd}
\begin{aligned}
\| f^\pm_\delta\|_{\wt H^r_\pm}&\le \cF(N_s)(\s\| \eta_\delta\|_{H^{r+2}}+\lb \rho\rb\g\| \eta_\delta\|_{H^r})\\
&\quad +\cF(N_s)\| \eta_\delta\|_{H^s}\big(\s (N_{s+\tdm}+1)+\lb \rho\rb\g N_{s-\mez}\big).
\end{aligned}
\eq
2) For each $\sigma\in [\mez, s-\mez-\delta]$, there exists $\cF$ depending only on $(h, s, \sigma, \mu^\pm)$ such that
\bq\label{Tld:f-eta}
T_{\ld_1}f^-_\delta=\frac{\mu^-}{\mu^++\mu^-}T_{\ld_1}k_\delta +g_-
\eq
with $g_-$ satisfying 
\bq\label{estTld:f-eta}
\begin{aligned}
\| g_-\|_{H^{\sigma-1+\delta}}&\le \cF(N_s)(\s\| \eta_\delta\|_{H^{\sigma+2}}+\lb \rho\rb\g\| \eta_\delta\|_{H^\sigma})\\
&\quad +\cF(N_s)\| \eta_\delta\|_{H^s}\big(\s (N_{s+\tdm}+1)+\lb \rho\rb\g N_{s-\mez}\big).
\end{aligned}
\eq
\end{lemm}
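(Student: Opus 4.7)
The plan is to first derive the paradifferential identity of part 2 at the level of algebra, and then prove both parts simultaneously through a bootstrap mimicking the proof of Proposition \ref{prop:estfpm}. To obtain the identity, I would subtract the flux equations of \eqref{system:fpmj} for $j=1,2$ and write $G^\pm(\eta_1)f^\pm_1-G^\pm(\eta_2)f^\pm_2 = G^\pm(\eta_1)f^\pm_\delta+E^\pm$, where
\[
E^\pm := [G^\pm(\eta_1)-G^\pm(\eta_2)]f^\pm_2,
\]
then paralinearize $G^\pm(\eta_1)$ via Theorem \ref{paralin:ABZ} and use the jump relation $f^+_\delta = f^-_\delta - k_\delta$ to eliminate $f^+_\delta$ from the $T_{\ld_1}$ terms. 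This leads to \eqref{Tld:f-eta} with the explicit formula
\[
g_- = \frac{\mu^-}{\mu^++\mu^-}R^+(\eta_1)f^+_\delta - \frac{\mu^+}{\mu^++\mu^-}R^-(\eta_1)f^-_\delta - \frac{\mu^+}{\mu^++\mu^-}E^- + \frac{\mu^-}{\mu^++\mu^-}E^+.
\]

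For the base case of part 1 at $r=\mez$, I would observe that $(f^+_\delta,f^-_\delta)$ solves a system of the same structure as \eqref{system:fpm} with coefficient $\eta_1$, but with a forcing $F := \tfrac{1}{\mu^-}E^- - \tfrac{1}{\mu^+}E^+$ added to the flux equation. A variational estimate paralleling Proposition \ref{prop:fpm} and adapted to this forcing would give $\|f^\pm_\delta\|_{\wt H^\mez_\pm} \les \|k_\delta\|_{H^\mez} + \|F\|_{H^{-\mez}}$. Theorem \ref{contract:DN} combined with the bound from Proposition \ref{prop:estfpm} on $\|f^\pm_2\|_{\wt H^{s-\mez}_\pm}$ then controls $\|F\|_{H^{-\mez}} \le \|F\|_{H^{s-\tdm}}$ by $\cF(N_s)\|\eta_\delta\|_{H^s}(\s N_{s+\tdm}+\lb\rho\rb\g N_{s-\mez})$, while Theorem \ref{est:diffF(U)} handles $\|k_\delta\|_{H^\mez}$ in terms of $\|\eta_\delta\|_{H^{\frac 52}}$. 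This establishes \eqref{contra:fpmd} at $r=\mez$.

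For the inductive step, assuming \eqref{contra:fpmd} holds at some $r=\sigma\in[\mez,s-\mez-\delta]$, I would estimate each term of $g_-$ in $H^{\sigma-1+\delta}$: the remainders $R^\pm(\eta_1)f^\pm_\delta$ via \eqref{RDN:ABZ} at exponent $\sigma$ fed by the inductive bound on $\|f^\pm_\delta\|_{\wt H^\sigma_\pm}$, and the contraction terms $E^\pm$ via Theorem \ref{contract:DN} (using that $\sigma-1+\delta\le s-\tdm$) together with Proposition \ref{prop:estfpm} applied to $f^\pm_2$. This yields \eqref{estTld:f-eta} at level $\sigma$. Next, applying the inequality \eqref{ineq} to $f^-_\delta$ with $\mu=\sigma+\delta$, combined with the identity \eqref{Tld:f-eta} and the symbolic-calculus bound $\|T_{\ld_1}k_\delta\|_{H^{\sigma-1+\delta}} \les \cF(N_s)\|k_\delta\|_{H^{\sigma+\delta}}$ from Theorem \ref{theo:sc}~(i), lifts $f^-_\delta$ from $\wt H^\sigma_-$ to $\wt H^{\sigma+\delta}_-$; the jump equation then lifts $f^+_\delta$ analogously. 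Iterating in steps of size $\delta$ reaches $r=s-\mez$ in finitely many rounds, producing \eqref{contra:fpmd} at every intermediate level.

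The hard part will be managing the circularity between the two parts: \eqref{estTld:f-eta} at level $\sigma$ requires the $\wt H^\sigma_\pm$ bound on $f^\pm_\delta$, while the upgrade of \eqref{contra:fpmd} beyond the variational base case relies on \eqref{Tld:f-eta}. The bootstrap resolves this by matching the $\delta$-gain in \eqref{ineq} against the $\delta$-smoothing of the paralinearization remainder in \eqref{RDN:ABZ}. A further subtlety is to track, through every bootstrap step without regularity loss, how the source term $\cF(N_s)\|\eta_\delta\|_{H^s}\bigl(\s(N_{s+\tdm}+1)+\lb\rho\rb\g N_{s-\mez}\bigr)$, arising from $E^\pm$ applied to the already-bounded $f^\pm_2$, is inherited at each level.
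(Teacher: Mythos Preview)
Your proposal is correct and follows essentially the same route as the paper: derive the identity \eqref{Tld:f-eta} by subtracting the flux equations, paralinearizing $G^\pm(\eta_1)$, and eliminating $f^+_\delta$ via the jump relation; establish the $\wt H^{\mez}_\pm$ base case variationally; then bootstrap in steps of $\delta$ using \eqref{ineq}, \eqref{RDN:ABZ}, and Theorem \ref{contract:DN}. The only organizational difference is that for the base case the paper invokes a ready-made $\wt H^{\mez}_\pm$ contraction estimate from \cite{NgPa} (giving $\|f^\pm_\delta\|_{\wt H^{\mez}_\pm}\le \cF(N_s)(\|k_\delta\|_{H^{\mez}}+\|\eta_\delta\|_{H^s}\|(k_1,k_2)\|_{H^{\mez}})$), whereas you phrase it as a variational bound for the system with an inhomogeneous flux equation; both yield the same output.
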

\begin{proof}
Taking the difference of the second equation in \eqref{system:fpmj} for $j=1$ and $j=2$ we find that
\[
\frac{1}{\mu^-}G^-(\eta_1)f^-_\delta-\frac{1}{\mu^+}G^+(\eta_1)f^+_\delta=\frac{1}{\mu^+}[G^+(\eta_1)-G^+(\eta_2)]f^+_2-\frac{1}{\mu^-}[G^-(\eta_1)-G^-(\eta_2)]f^-_2.
\]
Since $G^\pm(\eta_1)f^\pm_\delta=\mp T_{\ld_1} f^\pm_\delta+R^\pm(\eta_1)f^\pm_\delta$ and $f^+_\delta=f^-_\delta-k_\delta$, this gives
\bq\label{Tld:f-eta:0}
T_{\ld_1}f^-_\delta=\frac{\mu^-}{\mu^++\mu^-}T_{\ld_1}k_\delta+\frac{\mu^+\mu^-}{\mu^++\mu^-} F
\eq
where
\[
F=\frac{1}{\mu^+}R^+(\eta_1)f^+_\delta-\frac{1}{\mu^-}R^-(\eta_1)f^-_\delta+\frac{1}{\mu^+}[G^+(\eta_1)-G^+(\eta_2)]f^+_2-\frac{1}{\mu^-}[G^-(\eta_1)-G^-(\eta_2)]f^-_2.
\]
Theorems \ref{theo:sc} (i) and \ref{est:diffF(U)} together imply that for $\nu\in [-1, s-\tdm]$,
\[
\| T_{\ld_1}k_\delta\|_{H^\nu}\le \cF(N_s)\big(\s(\| \eta_\delta\|_{H^{\nu+3}}+\| \eta_\delta\|_{H^s})+\lb \rho\rb\g\| \eta_\delta\|_{H^{\nu+1}}\big).
\]
 In light of Theorem \ref{paralin:ABZ} we have that for $\sigma\in [\mez, s-\mez]$,
 \[
\| R^\pm(\eta_1)f^\pm_\delta\|_{H^{\sigma-1+\delta}}\le \cF(N_s)\| f^\pm_\delta\|_{\wt H^\sigma_\pm}.
\]
Finally, a combination of Theorem \ref{contraction:DN} and Proposition \ref{prop:estfpm} yields
\begin{align*}
\| [G^\pm(\eta_1)-G^\pm(\eta_2)]f^\pm_2\|_{H^{s-\tdm}}&\le \cF(N_s)\| \eta_\delta\|_{H^s}\| f^\pm_2\|_{\wt H^{s-\mez}_\pm}\\
&\les\cF(N_s)\| \eta_\delta\|_{H^s}(\s\| \eta_2\|_{H^{s+\tdm}}+\lb \rho\rb\g \| \eta_2\|_{H^{s-\mez}}).
\end{align*}
Consequently, for $\sigma\in[\mez, s-\mez-\delta]$ we have
\bq\label{estF:fd}
\| F\|_{H^{\sigma-1+\delta}}\le  \cF(N_s)\| f^\pm_\delta\|_{\wt H_\pm^\sigma}+\cF(N_s)\| \eta_\delta\|_{H^s}\big(\s(\| \eta_2\|_{H^{s+\tdm}}+1)+\lb \rho\rb\g \| \eta_2\|_{H^{s-\mez}}\big)
\eq
and 
\begin{align*}
\|T_{\ld_1}f^-_\delta\|_{H^{\sigma-1+\delta}}&\le  \cF(N_s)\| f^\pm_\delta\|_{\wt H_\pm^\sigma}+ \cF(N_s)(\s\| \eta_\delta\|_{H^{\sigma+2+\delta}}+\lb \rho\rb\g\| \eta_\delta\|_{H^{\sigma+\delta}})\\
&\quad +\cF(N_s)\| \eta_\delta\|_{H^s}\big(\s(\| \eta_2\|_{H^{s+\tdm}}+1)+\lb \rho\rb\g \| \eta_2\|_{H^{s-\mez}}\big).
\end{align*}
Invoking the relation $f^+_\delta=f^-_\delta-k_\delta$ leads to the same bound for $\|T_{\ld_1}f^+_\delta\|_{H^{\sigma-1+\delta}}$ and thus
\begin{align*}
\|T_{\ld_1}f^\pm_\delta\|_{H^{\sigma-1+\delta}}&\le  \cF(N_s)\| f^\pm_\delta\|_{\wt H_\pm^\sigma}+ \cF(N_s)(\s\| \eta_\delta\|_{H^{\sigma+2+\delta}}+\lb \rho\rb\g\| \eta_\delta\|_{H^{\sigma+\delta}})\\
&\quad +\cF(N_s)\| \eta_\delta\|_{H^s}\big(\s(\| \eta_2\|_{H^{s+\tdm}}+1)+\lb \rho\rb\g \| \eta_2\|_{H^{s-\mez}}\big)
\end{align*}
 for $\sigma\in[\mez, s-\mez-\delta]$. Now we can apply \eqref{ineq} and use the definition  of $\wt H^\sigma_\pm$ (see \eqref{def:wtHs}) to have 
\bq\label{key:estfd}
\begin{aligned}
\| f^\pm_\delta\|_{H^{1, \sigma+\delta}}&\le \cF(N_s)(\|  f^\pm_\delta\|_{\wt H^\mez_\pm}+\|  f^\pm_\delta\|_{ H^{1, \sigma}})+\cF(N_s)(\s\| \eta_\delta\|_{H^{\sigma+2+\delta}}+\lb \rho\rb\g\| \eta_\delta\|_{H^{\sigma+\delta}})\\
&\quad +\cF(N_s)\| \eta_\delta\|_{H^s}\big(\s(\| \eta_2\|_{H^{s+\tdm}}+1)+\lb \rho\rb\g \| \eta_2\|_{H^{s-\mez}}\big)
\end{aligned}
\eq
for $\sigma\in[\mez, s-\mez-\delta]$.
Next we note that by using the variational form of \eqref{system:fpm} derived in  Proposition 4.8 \cite{NgPa} it can be proved that the following $\wt H^\mez_\pm$ contraction estimate holds
\bq
\| f^\pm\|_{\wt H^\mez_\pm}\le \cF(N_s)(\| k_\delta\|_{H^\mez}+\| \eta_\delta\|_{H^s}\|(k_1, k_2)\|_{H^\mez}).
\eq
By virtue of Theorem \ref{est:diffF(U)} and the embedding $\na \eta_j \in H^{s-1}\subset L^\infty$, we have
\begin{align*}
\| k_\delta\|_{H^\mez}&\le \s \Big\| \frac{\na\eta_1}{\sqrt{1+|\na\eta_1|^2}}-\frac{\na\eta_2}{\sqrt{1+|\na\eta_2|^2}}\Big\|_{H^\tdm}+ \lb \rho\rb\g\|\eta_\delta\|_{H^\mez}\\
&\le\s \cF(N_s)(\| \eta_\delta\|_{H^{\frac{5}{2}}}+\| \eta_\delta\|_{H^s})+ \lb \rho\rb\g\|\eta_\delta\|_{H^\mez}.
\end{align*}
It follows that
\bq\label{fd:low}
\begin{aligned}
\| f^\pm\|_{\wt H^\mez_\pm}&\le \cF(N_s)(\s\| \eta_\delta\|_{H^{\frac 52}}+\lb \rho\rb\g\| \eta_\delta\|_{H^{\mez}})\\
&\quad +\cF(N_s)\| \eta_\delta\|_{H^s}\big(\s (N_{\frac 52}+1)+\lb \rho\rb\g N_{\mez}\big).
\end{aligned}
\eq
Then combining \eqref{key:estfd}, \eqref{fd:low} and an induction argument we arrive at
\bq\label{contra:fd:100}
\begin{aligned}
\| f^\pm_\delta\|_{\wt H^r_\pm}&\le \cF(N_s)(\s\| \eta_\delta\|_{H^{r+2}}+\lb \rho\rb\g\| \eta_\delta\|_{H^r})\\
&\quad +\cF(N_s)\| \eta_\delta\|_{H^s}\big(\s (N_{s+\tdm}+1)+\lb \rho\rb\g N_{s-\mez}\big)
\end{aligned}
\eq
for all $r\in [\mez, s-\mez]$. This proves \eqref{contra:fpmd}. Finally, \eqref{Tld:f-eta}-\eqref{estTld:f-eta} follow from \eqref{Tld:f-eta:0}, \eqref{estF:fd} and \eqref{contra:fd:100}.
\end{proof}
\subsubsection{Proof of Theorem \ref{theo:contra:2p}}
From equation \eqref{eq:eta2p} we see that $\eta_\delta=\eta_1-\eta_2$ satisfies 
\[
\p_t \eta_\delta=-\frac{1}{\mu^-}G^-(\eta_1)f^-_\delta-\frac{1}{\mu^-}[G^-(\eta_1)-G^-(\eta_2)]f^-_2.
\]
According to Theorem \ref{contraction:DN},
\[
\| [G^-(\eta_1)-G^-(\eta_2)]f^-_2\|_{H^{s-\tdm}}\le (\s+\lb \rho\rb\g)\cF(N_s)\| \eta_\delta\|_{H^s}N_{s+\tdm}.
\]
Applying Theorem \ref{paralin:ABZ}  and the estimate \eqref{contra:fpmd} for $f^-_\delta$ (with $r=s-\mez-\delta$) yields $G^-(\eta_1)f^-_\delta=T_{\ld_1}f^-_\delta+R^-(\eta_1)f^-_\delta$ where 
\[
\begin{aligned}
\| R^-(\eta_1)f^-_\delta\|_{H^{s-\tdm}}&\le \cF(N_s)\| f^-_\delta\|_{\wt H^{s-\mez-\delta}_-}\\
&\les  (\s+\lb \rho\rb\g)\cF(N_s)(\| \eta_\delta\|_{H^{s+\tdm-\delta}}+\| \eta_\delta\|_{H^s}N_{s+\tdm}).
\end{aligned}
\]
Thus, for some $\cF$ depending only on $(h, s, \delta, \mu^\pm)$ we have
\[
\begin{aligned}
&\p_t \eta_\delta=-\frac{1}{\mu^-}T_{\ld_1}f^-_\delta+\cR_1,\\
&\| \cR_1\|_{H^{s-\tdm}}\le (\s+\lb \rho\rb\g)\cF(N_s)(\| \eta_\delta\|_{H^{s+\tdm-\delta}}+\| \eta_\delta\|_{H^s}N_{s+\tdm}).
\end{aligned}
\]
By virtue of  \eqref{Tld:f-eta}-\eqref{estTld:f-eta} with $\sigma=s-\mez-\delta$, 
\[
\begin{aligned}
&T_{\ld_1}f^-_\delta=\frac{\s\mu^-}{\mu^++\mu^-}T_{\ld_1}(H(\eta_1)-H(\eta_2))+\frac{\lb \rho\rb\g\mu^-}{\mu^++\mu^-}T_{\ld_1}\eta_\delta +g_-,\\
&\| g_-\|_{H^{s-\tdm}}\le (\s+\lb \rho\rb\g)\cF(N_s)(\| \eta_\delta\|_{H^{s+\tdm-\delta}}+\| \eta_\delta\|_{H^s}N_{s+\tdm}).
\end{aligned}
\]
Clearly,
\[
\| T_{\ld_1}\eta_\delta\|_{H^{s-\tdm}}\le \cF(N_s)\| \eta\|_{H^{s-\mez}}.
\]
Then in view of \eqref{TldHd} we deduce that
\bq
\begin{aligned}
&\p_t\eta_\delta=-\frac{\s}{\mu^++\mu^-}T_{\ld_1\ell_1}\eta_\delta-\cR_2,\\
&\| \cR_2\|_{H^{s-\tdm}}\le (\s+\lb \rho\rb\g)\cF(N_s)\big(\| \eta_\delta\|_{H^{s+\tdm-\delta}}+N_{s+\tdm}\| \eta_\delta\|_{H^s}\big),
\end{aligned}
\eq
where $\cF$ depends only on $(h, s, \delta,  \mu^\pm)$. This reduction is of the same form as \eqref{req:etad}-\eqref{est:wtcR1} in the proof of Theorem \ref{theo:contra}. Thus, we can conclude similarly. 
\subsection{Proof of Theorem \ref{theo:2p}}
Let  $\eta_0\in H^s$ be an initial datum satisfying $\dist (\eta_0, \Gamma^\pm)>2h>0$. For each $\eps\in (0, 1)$, let $\eta_\eps$ solve the ODE 
\bq
\p_t\eta_\eps=-\frac{1}{\mu^-}J_\eps \Big[G^-(J_\eps \eta_\eps)\big(J_\eps f^-_\eps\big)\Big],\quad \eta_\eps\vert_{t=0}=\eta_0,
\eq
where $f^\pm_\eps$ solve
\bq
\begin{cases}
 f^{-}_\eps-f^{+}_\eps=\s H(\eta_\eps)+\lb \rho\rb\g\eta_\eps,\\
\frac{1}{\mu^+}G^+(\eta_\eps)f^{+}_\eps-\frac{1}{\mu^-}G^-(\eta_\eps)f^{-}_\eps=0.
\end{cases}
\eq
Note that the solvability and regularity of $f^\pm_\eps$ are guaranteed by Propositions \ref{prop:fpm} and \ref{prop:estfpm}. Since  the a priori estimates in Proposition \ref{apriori:2p} and the contraction estimate in Theorem \ref{theo:contra:2p} remain true for $\eta_\eps$, the existence, uniqueness and stability of solutions to \eqref{eq:eta2p}-\eqref{system:fpm} can be deduced  as in  the proof of Theorem \ref{theo:1p}.
\appendix
\section{A review of paradifferential calculus}\label{appendix:para}
We provide a review of basic features of Bony's paradifferential calculus (see e.g. \cite{BCD, Bony, Hormander, MePise}). 
\begin{defi}\label{defi:para}
1. (Symbols) Given~$\rho\in [0, \infty)$ and~$m\in\Rr$,~$\Gamma_{\rho}^{m}(\Rr^d)$ denotes the space of
locally bounded functions~$a(x,\xi)$
on~$\Rr^d\times(\Rr^d\setminus 0)$,
which are~$C^\infty$ with respect to~$\xi$ for~$\xi\neq 0$ and
such that, for all~$\alpha\in\Nn^d$ and all~$\xi\neq 0$, the function
$x\mapsto \partial_\xi^\alpha a(x,\xi)$ belongs to~$W^{\rho,\infty}(\Rr^d)$ and there exists a constant
$C_\alpha$ such that,
\begin{equation*}
\forall |\xi|\ge \mez,\quad 
\Vert \partial_\xi^\alpha a(\cdot,\xi)\Vert_{W^{\rho,\infty}(\Rr^d)}\le C_\alpha
(1+|\xi|)^{m-|\alpha|}.
\end{equation*}
Let $a\in \Gamma_{\rho}^{m}(\Rr^d)$, we define the semi-norm
\begin{equation}\label{defi:norms}
M_{\rho}^{m}(a)= 
\sup_{|\alpha|\le 2(d+2) +\rho ~}\sup_{|\xi| \ge \mez~}
\Vert (1+|\xi|)^{|\alpha|-m}\partial_\xi^\alpha a(\cdot,\xi)\Vert_{W^{\rho,\infty}(\Rr^d)}.
\end{equation}
2. (Paradifferential operators) Given a symbol~$a$, we define
the paradifferential operator~$T_a$ by
\begin{equation}\label{eq.para}
\widehat{T_a u}(\xi)=(2\pi)^{-d}\int \chi(\xi-\eta,\eta)\widehat{a}(\xi-\eta,\eta)\Psi(\eta)\widehat{u}(\eta)
\, d\eta,
\end{equation}
where
$\widehat{a}(\theta,\xi)=\int e^{-ix\cdot\theta}a(x,\xi)\, dx$
is the Fourier transform of~$a$ with respect to the first variable; 
$\chi$ and~$\Psi$ are two fixed~$C^\infty$ functions such that:
\begin{equation}\label{cond.psi}
\Psi(\eta)=0\quad \text{for } |\eta|\le \frac{1}{5},\qquad
\Psi(\eta)=1\quad \text{for }|\eta|\geq \frac{1}{4},
\end{equation}
and~$\chi(\theta,\eta)$ 
satisfies, for~$0<\eps_1<\eps_2$ small enough,
$$
\chi(\theta,\eta)=1 \quad \text{if}\quad |\theta|\le \eps_1| \eta|,\qquad
\chi(\theta,\eta)=0 \quad \text{if}\quad |\theta|\geq \eps_2|\eta|,
$$
and such that
$$
\forall (\theta,\eta), \qquad | \partial_\theta^\alpha \partial_\eta^\beta \chi(\theta,\eta)|\le 
C_{\alpha,\beta}(1+| \eta|)^{-|\alpha|-|\beta|}.
$$
\end{defi}
\begin{rema}
The cut-off $\chi$ can be appropriately chosen so that when $a=a(x)$, the paradifferential operator $T_au$ becomes the usual paraproduct.
\end{rema}
\begin{defi}\label{defi:order}
Let~$m\in\Rr$.
An operator~$T$ is said to be of  order~$m$ if, for all~$\mu\in\Rr$,
it is bounded from~$H^{\mu}$ to~$H^{\mu-m}$.
\end{defi}
Symbolic calculus for paradifferential operators is summarized in the following theorem.
\begin{theo}\label{theo:sc}(Symbolic calculus)
Let~$m\in\Rr$ and~$\rho\ge 0$. \\
$(i)$ If~$a \in \Gamma^m_0(\Rr^d)$, then~$T_a$ is of order~$m$. 
Moreover, for all~$\mu\in\Rr$ there exists a constant~$K$ such that
\begin{equation}\label{esti:quant1}
\Vert T_a \Vert_{H^{\mu}\rightarrow H^{\mu-m}}\le K M_{0}^{m}(a).
\end{equation}
$(ii)$ If~$a\in \Gamma^{m}_{\rho}(\Rr^d), b\in \Gamma^{m'}_{\rho}(\Rr^d)$ then 
$T_a T_b -T_{ab}$ is of order~$m+m'-\rho$. 
where
\[
a\sharp b:=\sum_{|\alpha|<\rho}\frac{(-i)^{\alpha}}{\alpha !}\partial_{\xi}^{\alpha}a(x, \xi)\partial_x^{\alpha}b(x, \xi).
\] 
Moreover, for all~$\mu\in\Rr$ there exists a constant~$K$ such that
\begin{equation}\label{esti:quant2}
\begin{aligned}
\Vert T_a T_b  - T_{a  b} \Vert_{H^{\mu}\rightarrow H^{\mu-m-m'+\rho}}
&\le 
K (M_{\rho}^{m}(a)M_{0}^{m'}(b)+M_{0}^{m}(a)M_{\rho}^{m'}(b)).
\end{aligned}
\end{equation}
$(iii)$ Let~$a\in \Gamma^{m}_{\rho}(\Rr^d)$. Denote by 
$(T_a)^*$ the adjoint operator of~$T_a$ and by~$\overline{a}$ the complex conjugate of~$a$. Then $(T_a)^* -T_{\overline{a}}$ is of order~$m-\rho$  where
\[
a^*=\sum_{|\alpha|<\rho}\frac{1}{i^{|\alpha|}\alpha!}\partial_{\xi}^{\alpha}\partial_x^{\alpha}\overline{a}.
\] 
Moreover, for all~$\mu$ there exists a constant~$K$ such that
\begin{equation}\label{esti:quant3}
\Vert (T_a)^*   - T_{a^*}   \Vert_{H^{\mu}\rightarrow H^{\mu-m+\rho}}
\le 
K M_{\rho}^{m}(a).
\end{equation}
\end{theo}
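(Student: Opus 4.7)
The plan is to follow the classical approach of \cite{BCD, MePise}, exploiting the frequency localization built into the definition \eqref{eq.para} of $T_a$. I would handle $(i)$, $(ii)$, $(iii)$ in order, with $(ii)$ as the conceptual core.

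For part $(i)$, I would apply Plancherel and view $T_a$ as a Fourier integral operator with kernel $K(\xi, \eta) = (2\pi)^{-d}\chi(\xi-\eta, \eta)\widehat{a}(\xi - \eta, \eta)\Psi(\eta)$. The cutoff $\chi$ confines $\eta$ to the region $|\xi - \eta| \le \epsilon_2 |\eta|$, so $|\xi|$ and $|\eta|$ are comparable on the support. Integrating by parts in $x$ inside $\widehat{a}(\xi - \eta, \eta)$ yields polynomial decay of the kernel in $|\xi - \eta|$, with constants controlled by finitely many $L^\infty$ norms of $x$-derivatives of $a(\cdot, \eta)$ (here only $\rho = 0$ is needed) times the growth factor $(1+|\eta|)^m$. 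Schur's lemma then delivers the $H^\mu \to H^{\mu - m}$ bound with constant $M^m_0(a)$, giving \eqref{esti:quant1}.

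For part $(ii)$, starting from the double integral representation of $T_a T_b u$, the strategy is to Taylor expand $a(x, \xi)$ in the frequency variable around the output frequency $\zeta$ of $T_b$. The chained cutoffs $\chi(\xi - \eta, \eta)\chi(\eta - \zeta, \zeta)\Psi(\eta)\Psi(\zeta)$ localize $\xi$, $\eta$, $\zeta$ to a regime where they are all comparable, which makes the expansion
\[
a(x, \xi) = \sum_{|\alpha| < \rho} \frac{(\xi - \zeta)^\alpha}{\alpha!} \partial_\xi^\alpha a(x, \zeta) + R_\rho(x, \xi, \zeta)
\]
meaningful. The monomial $(\xi - \zeta)^\alpha$ inside the Fourier integral converts, via the identity $(\xi - \zeta)^\alpha \widehat{b}(\eta-\zeta, \zeta) = (-i)^{|\alpha|}\widehat{\partial_x^\alpha b}(\eta-\zeta, \zeta)$, into the composition formula $a \sharp b = \sum_{|\alpha| < \rho} \frac{(-i)^\alpha}{\alpha!} \partial_\xi^\alpha a\, \partial_x^\alpha b$. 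The remainder $R_\rho$ carries $\rho$ extra factors of $(\xi - \zeta)$; each trades one unit of frequency growth for one spatial derivative of $a$, and since $\partial_x^\rho a(\cdot, \xi) \in L^\infty$ by definition of $\Gamma^m_\rho$, the net effect is an operator of order $m + m' - \rho$ with norm $M^m_\rho(a)M^{m'}_0(b) + M^m_0(a)M^{m'}_\rho(b)$ (the two terms reflecting the symmetric freedom in where the spatial smoothness is absorbed). Part $(iii)$ proceeds by the same mechanism: the kernel of $(T_a)^*$ is obtained from that of $T_a$ by complex conjugation and coordinate swap, and Taylor expanding $\bar{a}$ in $\xi$ followed by integration by parts in $x$ yields the formal expansion for $a^*$ with an analogous remainder estimate.

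The main obstacle is the careful bookkeeping of the Taylor remainder in part $(ii)$. Making the heuristic ``each factor $(\xi - \zeta)$ costs one spatial derivative of $a$ but gains one inverse power of frequency'' rigorous requires either a paradyadic (Littlewood--Paley) decomposition of the trilinear kernel in $(\xi, \eta, \zeta)$ or a direct Schur-type analysis that incorporates the parameter $\zeta$ throughout. Once the remainder kernel is shown to satisfy the appropriate decay and derivative bounds, Plancherel converts it into the order $m + m' - \rho$ operator bound required for \eqref{esti:quant2}. Parts $(i)$ and $(iii)$ are essentially special cases of this machinery and follow more directly.
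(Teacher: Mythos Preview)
The paper does not prove this theorem. Theorem~\ref{theo:sc} appears in Appendix~\ref{appendix:para} as part of a \emph{review} of paradifferential calculus, stated without proof and attributed to the standard references \cite{BCD, Bony, Hormander, MePise}. There is therefore no ``paper's own proof'' to compare against.

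That said, your sketch is a faithful outline of the classical argument found in those references (see in particular \cite[Chapter~5]{MePise} and \cite[\S2.5--2.6]{BCD}): boundedness via Schur/Littlewood--Paley for $(i)$, Taylor expansion in the frequency variable combined with the cutoff localization for the composition formula $(ii)$, and the analogous duality computation for $(iii)$. One small caveat: in part $(ii)$ the symbol class $\Gamma^m_\rho$ only guarantees $W^{\rho,\infty}$ regularity in $x$, which for non-integer $\rho$ rules out a literal integer-order Taylor expansion placing ``$\rho$ derivatives'' on $a$. The standard treatments handle this by interpolating between the integer cases or, equivalently, by a dyadic decomposition in the $x$-frequency of the symbol (the ``admissible cutoff'' or ``$\sigma$-regularization'' of \cite{MePise}). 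Your phrase ``each factor $(\xi-\zeta)$ costs one spatial derivative of $a$'' is the right heuristic, but making it precise for fractional $\rho$ is exactly the bookkeeping you flag as the main obstacle, and it is indeed where the work lies.
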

\begin{rema}\label{rema:low}
In the definition \eqref{eq.para} of paradifferential operators, the cut-off $\Psi$ removes the low frequency part of $u$. Consequently, when $a\in \Gamma^m_0$  we have
\[
\Vert T_a u\Vert_{H^\sigma}\le CM_0^m(a)\Vert \nabla u\Vert_{H^{\sigma+m-1}}\equiv CM_0^m(a)\Vert  u\Vert_{H^{1, \sigma+m}}. 
\]
The same remark applies to Theorem A.4 (ii) and (iii). 
\end{rema}
Next we recall several useful product and paraproduct rules.
\begin{theo}\label{pproduct}
Let $s_0$, $s_1$ and $s_2$ be real numbers.
\begin{enumerate}
\item For any $s\in \Rr$, 
\bq\label{pp:Linfty}
\| T_a u\|_{H^s}\le C\| a\|_{L^\infty}\| u\|_{H^s}.
\eq
\item If
$s_0\le s_2$ and~$s_0 < s_1 +s_2 -\frac{d}{2}$, 
then
\begin{equation}\label{boundpara}
\Vert T_a u\Vert_{H^{s_0}}\le C \Vert a\Vert_{H^{s_1}}\Vert u\Vert_{H^{s_2}}.
\end{equation}
\item If $s_1+s_2> 0$, $s_0\le s_1$ and $s_0< s_1+s_2-\frac{d}{2}$ then  
\bq\label{paralin:product}
\Vert au - T_a u\Vert_{H^{s_0}}\le C \Vert a\Vert_{H^{s_1}}\Vert u\Vert_{H^{s_2}}.
\eq
\item \label{it.1} If $s_1+s_2> 0$, $s_0\le s_1$, $s_0\le s_2$ and  $s_0< s_1+s_2-\frac d2
$ then 
\begin{equation}\label{pr}
\Vert u_1 u_2 \Vert_{H^{s_0}}\le C \Vert u_1\Vert_{H^{s_1}}\Vert u_2\Vert_{H^{s_2}}.
\end{equation}
\end{enumerate}
\end{theo}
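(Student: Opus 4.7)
The plan is to establish all four items by combining dyadic Littlewood--Paley decomposition with Bony's paraproduct machinery, which is the classical route (see \cite{BCD, Bony, MePise}). The starting point is the decomposition $T_a u \simeq \sum_j (S_{j-N_0}a)\Delta_j u$, valid with an integer $N_0$ fixed by the spectral cutoffs $\chi$ and $\Psi$ of Definition \ref{defi:para}; the key feature is that the $j$-th summand has Fourier support in a dyadic annulus of size $2^j$, so almost orthogonality gives $\|T_a u\|_{H^s}^2 \simeq \sum_j 2^{2js}\|(S_{j-N_0}a)\Delta_j u\|_{L^2}^2$.

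For item (1), I would simply bound $\|S_{j-N_0}a\|_{L^\infty}\le C\|a\|_{L^\infty}$ uniformly in $j$ and sum. For item (2), the same dyadic sum is used, but $\|S_{j-N_0}a\|_{L^\infty}$ must now be controlled through the Sobolev hypothesis on $a$: by Bernstein, $\|\Delta_k a\|_{L^\infty}\lesssim 2^{kd/2}\|\Delta_k a\|_{L^2}\lesssim 2^{k(d/2-s_1)}\|a\|_{H^{s_1}}$, so summing in $k\le j-N_0$ yields a geometric factor $2^{j(d/2-s_1)_+}$ (with an obvious modification when $s_1>d/2$). Cauchy--Schwarz in $j$, combined with the hypotheses $s_0\le s_2$ and $s_0<s_1+s_2-d/2$, then closes the bound.

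Items (3) and (4) would be derived from Bony's decomposition $au = T_a u + T_u a + R(a,u)$, where the remainder is $R(a,u)=\sum_{|j-k|\le 1}\Delta_j a\,\Delta_k u$. Applying item (2) to $T_a u$ (using $s_0\le s_2$) and to $T_u a$ (using $s_0\le s_1$) takes care of the two paraproduct pieces. Item (3) is exactly the identity $au-T_a u = T_u a + R(a,u)$, and item (4) is the sum of the three terms; in both cases the only remaining task is to bound $R(a,u)$ in $H^{s_0}$.

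The main obstacle is the remainder $R(a,u)$: unlike the paraproducts, each piece $\Delta_j a\,\Delta_k u$ with $|j-k|\le 1$ has Fourier support in a \emph{ball} of radius $\sim 2^j$, not in a shell, so the summability is subtler. I would estimate $\Delta_p R(a,u)$ using only the indices with $\max(j,k)\ge p-C$, apply Bernstein to gain the factor $2^{pd/2}$ from the ball localization, and then use Young's convolution inequality for sequences. The hypothesis $s_1+s_2>0$ is precisely what makes the resulting geometric series in the dyadic index converge, while $s_0<s_1+s_2-d/2$ absorbs the Bernstein loss. This is the only genuinely delicate step; everything else is bookkeeping of the dyadic sums.
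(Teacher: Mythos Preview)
Your outline is correct and is exactly the classical route found in the references \cite{BCD, Bony, MePise}: paraproduct almost-orthogonality for items (1)--(2), Bony's decomposition $au=T_au+T_ua+R(a,u)$ for items (3)--(4), and the Bernstein-based remainder estimate using $s_1+s_2>0$ for geometric summability. One small remark: in item (2) the borderline case $s_1=d/2$ deserves a word, since the naive Bernstein sum for $\|S_{j-N_0}a\|_{L^\infty}$ picks up a factor $\sqrt{j}$; the strict inequality $s_0<s_1+s_2-d/2$ absorbs it, but you should say so explicitly rather than leaving it implicit in ``obvious modification''.

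As for comparison with the paper: there is nothing to compare. Theorem \ref{pproduct} sits in Appendix \ref{appendix:para}, which is a \emph{review} of paradifferential calculus; the paper states the theorem without proof and refers the reader to the standard sources \cite{BCD, Bony, Hormander, MePise}. Your sketch is precisely the argument one finds there (e.g.\ Theorems 2.82, 2.85 and Corollary 2.86 in \cite{BCD}), so you are not deviating from the paper's ``proof'' --- you are supplying the proof the paper chose to omit.
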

\begin{theo}[\protect{\cite[Theorem~2.89]{BCD}}]\label{est:nonl}
  Consider~$F\in C^\infty(\Cc^N)$ such that~$F(0)=0$.  For $s\ge 0$, there exists a non-decreasing function~$\mathcal{F}\colon\Rr_+\rightarrow\Rr_+$ 
such that, for any~$U\in H^s(\Rr^d)^N\cap L^\infty(\Rr^d)^N$,
\begin{equation}\label{est:F(u):S}
\Vert F(U)\Vert_{H^s}\le \mathcal{F}\bigl(\Vert U\Vert_{L^\infty}\bigr)\Vert U\Vert_{H^s}.
\end{equation}
\end{theo}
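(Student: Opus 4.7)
The plan is to prove this classical Moser-type composition estimate using the Littlewood-Paley decomposition. By treating real and imaginary parts componentwise I reduce to the case of a smooth real-valued $F$ on $\Rr^N$ with $F(0) = 0$. Let $\Delta_j$ denote the Littlewood-Paley projector at frequency $2^j$ and $S_j = \sum_{k < j} \Delta_k$ the associated low-frequency cutoff, with the convention $S_{-1} U = 0$. The backbone of the argument is the telescoping identity
\begin{equation*}
F(U) = \sum_{j \ge -1} \big[F(S_{j+1} U) - F(S_j U)\big],
\end{equation*}
where the hypothesis $F(0) = 0$ is used crucially to initialize the sum at $j = -1$. By the fundamental theorem of calculus applied to $\tau \mapsto F(S_j U + \tau \Delta_j U)$, each increment factors as $F(S_{j+1}U) - F(S_j U) = m_j \cdot \Delta_j U$ with $m_j = \int_0^1 \nabla F(S_j U + \tau \Delta_j U)\, d\tau$. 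Since $\|S_j U + \tau \Delta_j U\|_{L^\infty} \les \|U\|_{L^\infty}$ uniformly in $j, \tau$, we obtain $\|m_j\|_{L^\infty} \le \cF(\|U\|_{L^\infty})$ for a nondecreasing $\cF$ depending on $F$.

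Next I exploit spectral localization: since $\Delta_j U$ is Fourier-supported in an annulus of size $2^j$, the product $m_j \cdot \Delta_j U$ has Fourier support in a ball of radius $\les 2^j$, so $\Delta_q(m_j \Delta_j U) = 0$ once $q \ge j + N_0$ for a fixed integer $N_0$. Combining this with the trivial bound $\|\Delta_q(m_j \Delta_j U)\|_{L^2} \le \cF(\|U\|_{L^\infty}) \|\Delta_j U\|_{L^2}$ yields
\begin{equation*}
2^{qs} \|\Delta_q F(U)\|_{L^2} \le \cF(\|U\|_{L^\infty}) \sum_{j \ge q - N_0} 2^{(q-j)s}\, 2^{js} \|\Delta_j U\|_{L^2}.
\end{equation*}
Since $s \ge 0$, the sequence $(2^{(q-j)s} \mathbf{1}_{j \ge q - N_0})$ is summable in $q - j$ and hence convolution with it is bounded on $\ell^2$ by Young's inequality. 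Taking $\ell^2$ norms in $q$ and invoking the Littlewood-Paley characterization $\|v\|_{H^s}^2 \sim \sum_j 2^{2js} \|\Delta_j v\|_{L^2}^2$ delivers the desired estimate $\|F(U)\|_{H^s} \le \cF(\|U\|_{L^\infty}) \|U\|_{H^s}$.

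The most delicate point is the convolutional summation in the last step: the decay factor $2^{(q-j)s}$ is summable in $j \ge q - N_0$ precisely because $s \ge 0$, which is why the statement fails for negative $s$ without extra regularity of $F$. Equally essential is the hypothesis $F(0) = 0$, without which the telescoping sum acquires a nonzero constant term $F(0)$ which does not lie in $L^2(\Rr^d)$, so that $F(U) \notin H^s$ would be possible even for Schwartz $U$. A streamlined alternative uses Bony's paralinearization $F(U) = T_{\nabla F(U)} U + R$, with the remainder $R$ strictly smoother than $U$; the main term is then controlled by the paraproduct estimate \eqref{pp:Linfty}, namely $\|T_{\nabla F(U)} U\|_{H^s} \le C \|\nabla F(U)\|_{L^\infty} \|U\|_{H^s}$, combined with $\|\nabla F(U)\|_{L^\infty} \le \cF(\|U\|_{L^\infty})$.
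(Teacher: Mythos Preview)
The paper does not supply its own proof of this statement; it is quoted verbatim from \cite[Theorem~2.89]{BCD} as a standard tool in the appendix. Your overall strategy---Meyer's first linearization via the telescoping sum $F(U)=\sum_j[F(S_{j+1}U)-F(S_jU)]$---is indeed the one used in that reference.

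There is, however, a genuine gap at the spectral-localization step. You claim that because $\Delta_j U$ is supported in an annulus at scale $2^j$, the product $m_j\cdot\Delta_jU$ has Fourier support in a ball of radius $\lesssim 2^j$, so that $\Delta_q(m_j\Delta_jU)=0$ for $q\ge j+N_0$. This is false: $m_j=\int_0^1\nabla F(S_jU+\tau\Delta_jU)\,d\tau$ is a \emph{nonlinear} function of a band-limited input, and nonlinear operations do not preserve compact spectral support (already $v\mapsto v^2$ doubles the support, and a general smooth $F$ spreads it over all frequencies). Consequently the restriction $j\ge q-N_0$ in your sum is unjustified, and without it the kernel $2^{(q-j)s}$ is not summable on the side $q>j$.

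The fix is quantitative rather than exact localization. Since $S_jU+\tau\Delta_jU$ is band-limited at scale $2^j$, Bernstein's inequality gives $\|\partial^\alpha(S_jU+\tau\Delta_jU)\|_{L^\infty}\le C_\alpha 2^{j|\alpha|}\|U\|_{L^\infty}$, and then the Fa\`a di Bruno formula yields $\|\partial^\alpha m_j\|_{L^\infty}\le C_\alpha(\|U\|_{L^\infty})\,2^{j|\alpha|}$ for every multi-index $\alpha$. Combining this with Bernstein for $\Delta_jU$ and the Leibniz rule gives $\|\nabla^M(m_j\Delta_jU)\|_{L^2}\le C_M(\|U\|_{L^\infty})\,2^{jM}\|\Delta_jU\|_{L^2}$, hence
\[
\|\Delta_q(m_j\Delta_jU)\|_{L^2}\le C_M(\|U\|_{L^\infty})\,2^{(j-q)M}\|\Delta_jU\|_{L^2}
\]
for all $q>j$ and any $M\in\Nn$. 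Choosing $M>s$ makes the kernel $2^{(q-j)s}2^{(j-q)M}=2^{-(q-j)(M-s)}$ summable on $\{q>j\}$; on $\{q\le j\}$ your trivial $L^\infty$ bound together with $s\ge0$ already suffices. With this correction in place, your convolution/Young argument goes through and matches the proof in \cite{BCD}.
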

\begin{theo}[\protect{\cite[Corollary~2.90]{BCD}}]\label{est:diffF(U)}
Consider~$F\in C^\infty(\Cc^N)$ such that~$\na F(0)=0$.  For $s\ge 0$, there exists a non-decreasing function~$\mathcal{F}\colon\Rr_+\rightarrow\Rr_+$ 
such that, for any~$U,~V\in H^s(\Rr^d)^N\cap L^\infty(\Rr^d)^N$,
\begin{equation}
\Vert F(U)-F(V)\Vert_{H^s}\le \mathcal{F}\bigl(\Vert (U, V)\Vert_{L^\infty}\bigr)\Big(\Vert U-V\Vert_{H^s}+\| U-V\|_{L^\infty}\sup_{\tau \in [0, 1]}\|V+\tau(U-V)\|_{H^s} \Big).
\end{equation}
\end{theo}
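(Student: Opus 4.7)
The plan is to reduce the inequality to a tame product estimate via the fundamental theorem of calculus, then exploit the hypothesis $\nabla F(0)=0$ through the nonlinear estimate in Theorem~\ref{est:nonl}. Setting $W_\tau := V+\tau(U-V)$, I write
\begin{equation*}
F(U)-F(V) \;=\; (U-V)\cdot G(U,V), \qquad G(U,V) := \int_0^1 \nabla F(W_\tau)\,d\tau,
\end{equation*}
so that the problem reduces to controlling the product of $U-V$ and $G(U,V)$ in $H^s$.

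For $G(U,V)$, the $L^\infty$ bound $\|G(U,V)\|_{L^\infty}\le \mathcal{F}_1(\|(U,V)\|_{L^\infty})$ is immediate from the smoothness of $\nabla F$ and the pointwise bound $\|W_\tau\|_{L^\infty}\le\|(U,V)\|_{L^\infty}$. The role of the assumption $\nabla F(0)=0$ is to yield an $H^s$ bound: viewing $\nabla F\colon \Cc^N\to\Cc^N$ as a smooth nonlinearity vanishing at the origin, Theorem~\ref{est:nonl} applied componentwise gives, uniformly in $\tau\in[0,1]$,
\begin{equation*}
\|\nabla F(W_\tau)\|_{H^s}\;\le\; \mathcal{F}_2(\|W_\tau\|_{L^\infty})\,\|W_\tau\|_{H^s}\;\le\; \mathcal{F}_2(\|(U,V)\|_{L^\infty})\,\sup_{\tau\in[0,1]}\|W_\tau\|_{H^s}.
\end{equation*}
Integrating in $\tau$ transfers the same bound to $\|G(U,V)\|_{H^s}$.

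The final step is the tame product inequality, valid for all $s\ge 0$,
\begin{equation*}
\|fg\|_{H^s}\;\lesssim\; \|f\|_{L^\infty}\|g\|_{H^s}+\|g\|_{L^\infty}\|f\|_{H^s},
\end{equation*}
which follows from Bony's decomposition $fg=T_fg+T_gf+R(f,g)$: the two paraproducts are controlled using \eqref{pp:Linfty} of Theorem~\ref{pproduct}, and the remainder $R(f,g)$ is handled by a standard Littlewood--Paley summation. Applying this with $f=U-V$ and $g=G(U,V)$ and inserting the bounds on $G$ yields the desired estimate with $\mathcal{F}=C(\mathcal{F}_1+\mathcal{F}_2)$.

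\textbf{Main obstacle.} The delicate point is the tame product inequality in the low-regularity regime $0\le s<d/2$, where the pointwise multiplication $L^\infty\times H^s\to H^s$ is \emph{not} a direct consequence of the Sobolev multiplication rule \eqref{pr}. The resolution is to retain the $L^\infty$ norm of one factor at each dyadic frequency block: this is automatic on the two paraproducts via \eqref{pp:Linfty}, while on Bony's remainder it follows from the classical frequency-localized $L^\infty\cdot L^2$ estimate together with the almost-orthogonality of overlapping dyadic supports. Everything else in the argument is bookkeeping around the FTC decomposition.
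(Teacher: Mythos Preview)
The paper does not give its own proof of this statement; it is quoted verbatim from \cite[Corollary~2.90]{BCD}. Your argument is exactly the standard proof of that corollary: the FTC decomposition $F(U)-F(V)=(U-V)\cdot\int_0^1\nabla F(W_\tau)\,d\tau$, the composition estimate of Theorem~\ref{est:nonl} applied to $\nabla F$ (legitimate since $\nabla F(0)=0$), and the tame product rule $\|fg\|_{H^s}\lesssim \|f\|_{L^\infty}\|g\|_{H^s}+\|g\|_{L^\infty}\|f\|_{H^s}$ for $s\ge 0$. Your treatment of the only genuine issue---that this product rule is not a consequence of \eqref{pr} when $s<d/2$ and must be obtained from Bony's decomposition with \eqref{pp:Linfty} on the paraproducts and a direct $L^\infty\cdot L^2$ bound on the remainder---is correct. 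There is nothing in the paper to compare against.
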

\begin{theo}[\protect{\cite[Theorem~2.92]{BCD} and \cite[Theorem 5.2.4]{MePise}}]\label{paralin:nonl}(Paralinearization for nonlinear functions)
Let $\mu,~\tau$ be positive real numbers and let $F\in C^{\infty}(\mathbb{C}^N)$ be a scalar function satisfying $F(0)=0$. If $U=(u_j)_{j=1}^N$ with $u_j\in H^{\mu}(\Rr^d)\cap C_*^{\tau}(\Rr^d)$ then we have
\begin{align}\label{def:RF}
&F(U)=\Sigma_{j=1}^NT_{\p_jF(U)}u_j+R,\\
&\| R\|_{ H^{\mu+\tau}}\le \cF(\| U\|_{L^{\infty}})\| U\|_{C_*^{\tau}}\| U\|_{H^{\mu}}.
\end{align}
\end{theo}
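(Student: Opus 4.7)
The plan is to follow the classical Bony telescoping argument. Let $(\Delta_q)_{q\ge -1}$ be a dyadic Littlewood--Paley decomposition with associated low-frequency projectors $S_q=\sum_{p<q}\Delta_p$. Since $F(0)=0$ and $S_q U\to U$ in a suitable sense, one has the telescoping identity
\begin{equation*}
F(U)=\sum_{q\ge 0}\bigl[F(S_{q+1}U)-F(S_q U)\bigr].
\end{equation*}
By the fundamental theorem of calculus applied componentwise, each term can be written as $\sum_{j=1}^N m_q^j\,\Delta_q u_j$, where
\begin{equation*}
m_q^j(x)=\int_0^1 \partial_j F\bigl(S_q U(x)+t\,\Delta_q U(x)\bigr)\,dt,
\end{equation*}
producing the representation $F(U)=\sum_{j,q} m_q^j\,\Delta_q u_j$.

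Next I would compare this with the paraproduct $T_{\partial_j F(U)}u_j=\sum_q S_{q-N_0}\partial_j F(U)\,\Delta_q u_j$ for the constant $N_0$ encoded in the cut-offs $\chi,\Psi$ of Definition~\ref{defi:para}. This expresses the remainder as $R=\sum_{j,q}\bigl(m_q^j-S_{q-N_0}\partial_j F(U)\bigr)\Delta_q u_j$, which I would further split into two pieces: $(m_q^j-\partial_j F(S_q U))\Delta_q u_j$ and $(\partial_j F(S_q U)-S_{q-N_0}\partial_j F(U))\Delta_q u_j$. For the first piece, a Taylor expansion of $\partial_j F$ around $S_q U$ plus the bound $\|\nabla^2 F\|_{L^\infty(K)}\le \cF(\|U\|_{L^\infty})$ on a ball $K\supset \mathrm{Range}(U)$ yields $|m_q^j-\partial_j F(S_q U)|\lesssim \cF(\|U\|_{L^\infty})|\Delta_q U|$, and the Bernstein-type estimate $\|\Delta_q U\|_{L^\infty}\lesssim 2^{-q\tau}\|U\|_{C_*^\tau}$ produces the crucial gain $2^{-q\tau}$. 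For the second piece I would invoke the composition estimate (Theorem~\ref{est:nonl}) applied to $\partial_j F-\partial_j F(0)$ together with standard Besov estimates $\|S_{q-N_0}v-v\|_{L^\infty}\lesssim 2^{-q\tau}\|v\|_{C_*^\tau}$ to obtain the same gain.

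Once both pieces have an $L^2$-bound of the form $\cF(\|U\|_{L^\infty})\,2^{-q\tau}\,\|U\|_{C_*^\tau}\|\Delta_q U\|_{L^2}$, the almost-orthogonality of the spectrally localized summands allows one to compute the $H^{\mu+\tau}$ norm of $R$ by an $\ell^2$-sum over $q$ of the weighted $L^2$ norms $2^{q(\mu+\tau)}\|R_q\|_{L^2}$. The weight $2^{q\tau}$ is absorbed by the gain, leaving $\|(2^{q\mu}\|\Delta_q U\|_{L^2})_q\|_{\ell^2}\lesssim \|U\|_{H^\mu}$, and one concludes
\begin{equation*}
\|R\|_{H^{\mu+\tau}}\le \cF(\|U\|_{L^\infty})\|U\|_{C_*^\tau}\|U\|_{H^\mu}.
\end{equation*}

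The main obstacle will be the bookkeeping of spectral localization: the coefficients $m_q^j$ are not themselves spectrally localized, so one must verify that $m_q^j\,\Delta_q u_j$ is still essentially supported in a fixed dilate of the annulus $\{|\xi|\sim 2^q\}$ up to an acceptable error, and that the discrepancy between $\partial_j F(S_q U)$ and $S_{q-N_0}\partial_j F(U)$ genuinely produces $\tau$ additional derivatives. This is the technical heart of Meyer's paralinearization trick and requires a careful treatment of how composition with a smooth nonlinearity interacts with the dyadic decomposition.
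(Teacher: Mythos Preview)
The paper does not prove this theorem; it is quoted in Appendix~\ref{appendix:para} as a known result with references to \cite[Theorem~2.92]{BCD} and \cite[Theorem 5.2.4]{MePise}, and no argument is supplied. Your sketch is precisely the classical Bony--Meyer telescoping proof that appears in those references, so in that sense you are reproducing the same approach the paper defers to.

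One small caution on your last paragraph: the summands $m_q^j\,\Delta_q u_j$ are \emph{not} spectrally localized in an annulus, since $m_q^j$ is a nonlinear function of $S_{q+1}U$ and has no compact Fourier support. The standard remedy (and the point you correctly flag as the technical heart) is that $m_q^j$ has all derivatives bounded by $C_k\,\cF(\|U\|_{L^\infty})\,2^{kq}$, so its high-frequency tail $\Delta_p m_q^j$ for $p\gg q$ decays like $2^{-N(p-q)}$ for every $N$; this is enough to recover almost-orthogonality after multiplying by $\Delta_q u_j$. Alternatively one observes that the product has Fourier support in a \emph{ball} of radius $\sim 2^q$, which already suffices to sum in $H^{\mu+\tau}$ via the characterization of Sobolev norms by dyadic blocks supported in balls (see \cite[Lemma~2.84]{BCD}). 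Either route closes the argument; just be sure not to claim annular support.
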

\section{Proof of Proposition \ref{prop:reform}}\label{appendix:reform}
Setting $q^\pm=p^\pm+\rho^\pm \g y$ we deduce from the Darcy law \eqref{Darcy:pm} that  
\bq\label{Darcy:uq}
\Delta_{x, y}q^\pm=0,\quad \mu^\pm u^\pm=-\na_{x, y}q^\pm \quad\text{in}\quad \Omega^\pm.
\eq
 {\it The one-phase problem.}  Then boundary condition \eqref{bcp:1p} gives  $q^-\vert_\Sigma= \s H(\eta)+\lb \rho\rb\g \eta$. Consequently, by the definition of $G^-(\eta)$ we have
\[
\sqrt{1+|\na \eta|^2}\na_{x, y}q^-\cdot n\vert_\Sigma=G^-(\eta)(\s H(\eta)+\rho^-\g \eta)
\]
which in conjunction with \eqref{Darcy:uq} yields 
\[
\sqrt{1+|\na \eta|^2}u^-\cdot n\vert_\Sigma=-\frac{1}{\mu^-}G^-(\eta)(\s H(\eta)+\rho^-\g \eta).
\]
Combing this and the kinematic boundary condition \eqref{kbc:pm} we obtain equation \eqref{eq:eta}.

{\it The two-phase problem.} Set $f^\pm=q^\pm\vert_\Sigma=p^\pm\vert_\Sigma+\rho^\pm\g \eta$. In view of the pressure jump condition \eqref{p:pm} we have 
\[
f^--f^+=\s H(\eta)+\lb \rho \rb\g\eta,\quad \lb \rho\rb=\rho^--\rho^+
\] 
which gives the first equation in \eqref{system:fpm}. On the other hand, since
\[
G^\pm(\eta)f^\pm=\sqrt{1+|\na \eta|^2}\na_{x, y}q^\pm\cdot n\vert_\Sigma,
\]
\eqref{Darcy:uq} implies that 
\bq\label{reform:100}
\mu^\pm \sqrt{1+|\na \eta|^2} u^\pm\cdot n\vert_\Sigma=-G^\pm(\eta)f^\pm.
\eq
The second equation in \eqref{system:fpm} thus follows from \eqref{reform:100} and the continuity \eqref{u.n:pm} of $u\cdot n$. Finally, \eqref{eq:eta2p} is a consequence of \eqref{kbc:pm} and \eqref{reform:100}.
\section{Estimates for paradifferential symbols}
We prove  estimates for the symbols defined in terms of $\ld$ and $\ell$ (see \eqref{ld} and \eqref{def:ell}) that are used in the proof of the main results. 
\begin{lemm}\label{lemm:symbol}
Let $s>1+\frac{d}{2}$ and $\delta \in (0, s-1-\frac d2)$. Then, there exists $\cF:\Rr^+\to \Rr^+$ such that 
\begin{align}\label{symbolest:1}
&M^1_\delta(\ld)+M^2_\delta(\ell)\le \cF(\| \na\eta\|_{H^{s-1}(\Rr^d)}),\\ \label{symbolest:2}
&M^3_\delta(\ld\ell)+M^\tdm_\delta(\sqrt{\ld \ell})+M^{-\tdm}_\delta(\sqrt{\ld \ell}^{-1}) \le\cF(\| \na\eta\|_{H^{s-1}(\Rr^d)}),\\ \label{symbolest:3}
&M_\delta^2(\ell_1-\ell_2)\le \cF(\|(\na\eta_1, \na\eta_2)\|_{H^{s-1}(\Rr^d)})\| \na\eta_1-\na\eta_2\|_{H^{s-1}(\Rr^d)}
\end{align}
for all $\eta,~\eta_1,~\eta_2\in H^{1, s}(\Rr^d)$ (see definition \eqref{def:dotHs}).
\end{lemm}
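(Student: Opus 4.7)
The plan is to reduce each estimate to a chain-rule/composition estimate in $W^{\delta,\infty}(\Rr^d)$, using that the assumption $\delta\in(0,s-1-\tfrac d2)$ and the Sobolev embedding $H^{s-1}(\Rr^d)\hookrightarrow W^{\delta,\infty}(\Rr^d)$ guarantee $\na\eta\in W^{\delta,\infty}$ with norm controlled by $\|\na\eta\|_{H^{s-1}}$. Since the semi-norm $M_\delta^m$ is evaluated only for $|\xi|\ge \mez$, what I really need is, for each multi-index $\alpha$ with $|\alpha|\le 2(d+2)+\delta$, a pointwise estimate of the form $|\partial_\xi^\alpha a(x,\xi)|+[\partial_\xi^\alpha a(\cdot,\xi)]_{W^{\delta,\infty}_x}\le \cF(\|\na\eta\|_{H^{s-1}})(1+|\xi|)^{m-|\alpha|}$.

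Step 1: I write $\lambda^2=P(\na\eta,\xi)$ with $P$ a quadratic form in $\xi$ whose coefficients are polynomials in $\na\eta$, and I note that $P\ge|\xi|^2$ by Cauchy-Schwarz, so $\lambda^2$ is uniformly elliptic. Hence $\lambda(x,\xi)=|\xi|\,\Phi(\na\eta(x),\xi/|\xi|)$ where $\Phi$ is $C^\infty$ on $\Rr^d\times \mathbb S^{d-1}$, and differentiating in $\xi$ produces symbols of the same type, homogeneous of degree $1-|\alpha|$. Composition with a $C^\infty$ function in $\na\eta$ and the Moser estimate
\begin{equation*}
\|F(\na\eta)\|_{W^{\delta,\infty}}\le \cF(\|\na\eta\|_{L^\infty})(1+\|\na\eta\|_{W^{\delta,\infty}})
\end{equation*}
give $M_\delta^1(\lambda)\le\cF(\|\na\eta\|_{H^{s-1}})$. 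The same argument applied to $\ell=(1+|\na\eta|^2)^{-1/2}(|\xi|^2-(\na\eta\cdot\xi)^2/(1+|\na\eta|^2))$, which is a polynomial of degree $2$ in $\xi$ with smooth-in-$\na\eta$ coefficients, yields $M_\delta^2(\ell)\le\cF(\|\na\eta\|_{H^{s-1}})$, proving \eqref{symbolest:1}. The product $\lambda\ell$ then lies in $\Gamma^3_\delta$ by the Leibniz rule, giving the first term of \eqref{symbolest:2}.

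Step 2: For $\sqrt{\lambda\ell}$ and $\sqrt{\lambda\ell}^{\,-1}$ the hard input is the ellipticity bound \eqref{ldell}, $\lambda\ell=(1+|\na\eta|^2)^{-3/2}\lambda^3\ge c(\|\na\eta\|_{L^\infty})|\xi|^3$, which ensures that $\lambda\ell$ stays uniformly away from zero (relative to $|\xi|^3$) and hence that $z\mapsto z^{\pm 1/2}$ can be composed with it on the range where it lives. Combining this with the factorization $\sqrt{\lambda\ell}=(1+|\na\eta|^2)^{-3/4}\lambda^{3/2}$ and the smooth structure of $\lambda$ from Step 1, each $\xi$-derivative lowers the degree by one and each $x$-derivative (in the $W^{\delta,\infty}$ sense) brings in a factor handled by the Moser estimate applied to the $C^\infty$ composite. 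This yields $M_\delta^{3/2}(\sqrt{\lambda\ell})\le\cF(\|\na\eta\|_{H^{s-1}})$ and, using $\sqrt{\lambda\ell}^{\,-1}=(1+|\na\eta|^2)^{3/4}\lambda^{-3/2}$ together with the lower bound $\lambda\ge|\xi|$, also $M_\delta^{-3/2}(\sqrt{\lambda\ell}^{\,-1})\le\cF(\|\na\eta\|_{H^{s-1}})$, completing \eqref{symbolest:2}. The verification of the chain rule bounds on $W^{\delta,\infty}$ for non-integer $\delta$ is the only genuinely technical point and is handled by the standard characterization of $C^\delta_*=W^{\delta,\infty}$ for $\delta\notin\mathbb N$ via finite differences together with the bound $\|F(u)-F(v)\|_{L^\infty}\le \cF(\|(u,v)\|_{L^\infty})\|u-v\|_{L^\infty}$.

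Step 3: For \eqref{symbolest:3}, I write $\ell_j=\Phi(\na\eta_j,\xi)$ where $\Phi(\cdot,\xi)$ is $C^\infty$ and polynomial of degree $2$ in $\xi$, and apply the difference version of the Moser estimate
\begin{equation*}
\|F(\na\eta_1)-F(\na\eta_2)\|_{W^{\delta,\infty}}\le\cF(\|(\na\eta_1,\na\eta_2)\|_{L^\infty})\|\na\eta_1-\na\eta_2\|_{W^{\delta,\infty}},
\end{equation*}
to each $\xi$-derivative $\partial_\xi^\alpha(\ell_1-\ell_2)$, which is homogeneous of degree $2-|\alpha|$ in $\xi$ with the same structure. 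The Sobolev embedding $H^{s-1}\subset W^{\delta,\infty}$ then converts the right-hand side into $\|\na\eta_1-\na\eta_2\|_{H^{s-1}}$, giving \eqref{symbolest:3}. The main obstacle throughout is purely notational: keeping track that every composition preserves the homogeneity in $\xi$ while the $W^{\delta,\infty}$ estimate in $x$ is uniform on $|\xi|\ge\mez$; the ellipticity \eqref{ldell} is what enables this for the square-root and inverse symbols in Step 2.
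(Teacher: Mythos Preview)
Your proposal is correct and follows essentially the same route as the paper: factor out the $\xi$-homogeneity to reduce to smooth functions of $\na\eta$, use the ellipticity $\ld\ell=(1+|\na\eta|^2)^{-3/2}\ld^3\ge |\xi|^3$ for the square-root and inverse symbols, and control the $W^{\delta,\infty}$-norms via the embedding $H^{s-1}\hookrightarrow W^{\delta,\infty}$. The only methodological difference is that the paper applies the composition estimates (Theorems \ref{est:nonl} and \ref{est:diffF(U)}) at the level of $H^{s-1}$ and \emph{then} embeds into $W^{\delta,\infty}$, whereas you invoke Moser-type estimates directly in $W^{\delta,\infty}$; both are standard and equivalent here. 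One small imprecision: your difference estimate $\|F(\na\eta_1)-F(\na\eta_2)\|_{W^{\delta,\infty}}\le\cF(\|(\na\eta_1,\na\eta_2)\|_{L^\infty})\|\na\eta_1-\na\eta_2\|_{W^{\delta,\infty}}$ as written is not quite true---the constant must also depend on $\|(\na\eta_1,\na\eta_2)\|_{W^{\delta,\infty}}$ (think of $F(t)=t^2$)---but since this is in turn bounded by $\cF(\|(\na\eta_1,\na\eta_2)\|_{H^{s-1}})$ via the same embedding, the conclusion \eqref{symbolest:3} is unaffected.
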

\begin{proof}
To prove \eqref{symbolest:1} for $\ld$, we rewrite $\ld$ as 
\[
\ld(x, \xi)=\big((1+|\nabla\eta(x)|^2)-(\nabla\eta(x)\cdot \frac{\xi}{|\xi|})^2\big)^\mez|\xi|:=g(\na \eta(x), \xi)|\xi|.
\]
Note that $g(0, \xi)=1$ and $g(\na \eta(x), \xi)\ge 1$ for all $(x, \xi)\in \Rr^d\times \Rr^d$. Applying Theorem \ref{est:nonl} and the Sobolev embedding $H^{s-1}(\Rr^d)\subset W^{\delta, \infty}(\Rr^d)$ we get 
\[
\| g(\na \eta, \xi)\|_{W^{\delta, \infty}(\Rr^d)}\le C\| g(\na \eta, \xi)-1\|_{H^{s-1}(\Rr^d)}+1\le \cF(\| \na \eta\|_{H^{s-1}(\Rr^d)}).
\]
It follows that 
\[
\| \partial_\xi^\alpha a(\cdot,\xi)\Vert_{W^{\delta,\infty}(\Rr^d)}\le C_\alpha\cF(\| \na \eta\|_{H^{s-1}(\Rr^d)})(1+| \xi|)^{1-|\alpha|}
\]
for all $\alpha\in \Nn^d$ and $|\xi|\ge \mez$. In view of the definition \eqref{defi:norms} of $M^m_\rho$, we obtain \eqref{symbolest:1}. As for $\ell$, we rewrite \eqref{def:ell} as
\[
\ell(x, \xi)=(1+|\na\eta|^2)^{-\tdm}\big((1+|\na \eta|^2)-(\na \eta\cdot \frac{\xi}{|\xi|})^2\big)|\xi|^2:=F(\na \eta, \xi)|\xi|^2
\]
and argue similarly. Note that $F(0, \xi)=1$ and the gradient of $F$ with respect to the first argument vanishes at $(0, \xi)$. This finishes the proof of \eqref{symbolest:1}. Since $\ld\ell=(1+|\na \eta|^2)^{-\tdm}\ld^3$, the estimates in \eqref{symbolest:2}  follow from \eqref{symbolest:1} for $\ld$, the chain rule and calculus inequalities.

Regarding \eqref{symbolest:3}, we apply Theorem \ref{est:diffF(U)} and the embedding $H^{s-1}(\Rr^d)\subset L^\infty(\Rr^d)$ to have
\begin{align*}
\| F(\na \eta_1(\cdot), \xi)-F(\na \eta_2(\cdot), \xi)\|_{W^{\delta, \infty}}&\le \| F(\na \eta_1(\cdot), \xi)-F(\na \eta_2(\cdot), \xi)\|_{H^{s-1}}\\
&\le\cF(\| (\na \eta_1, \na\eta_2)\|_{H^{s-1}})\| \na \eta_1-\na\eta_2\|_{H^{s-1}}
\end{align*}
for all $\xi\in \Rr^d$. Then, \eqref{symbolest:3} follows as above.
\end{proof}

\vspace{.1in}
\noindent{\bf{Acknowledgment.}} 
The work of HQN was partially supported by NSF grant DMS-1907776. The author thanks B. Pausader and F. Gancedo for many discussions on the Muskat problem. We would like to thank the reviewer for his/her positive comment and detailed reading.


\end{document}